\newcommand{\aaa}{{\mathfrak a}}
\newcommand{\Qq}{{\mathfrak Q}}
\newcommand{\Tt}{{\mathfrak T}}
\newcommand{\Nn}{{\mathfrak N}}
\newcommand{\Mm}{{\mathfrak M}}
\newcommand{\calD}{{\mathcal D}}
\newcommand{\calP}{{\mathcal P}}
\newcommand{\calQ}{{\mathcal Q}}
\newcommand{\calT}{{\mathcal T}}
\newcommand{\ord}{\mbox{ord}}
\newcommand{\pp}{{\mathfrak p}}
\newcommand{\calC}{{\mathcal C}}
\newcommand{\calS}{{\mathcal S}}
\newcommand{\calE}{{\mathcal E}}
\newcommand{\qq}{{\mathfrak q}}
\newcommand{\uu}{{\mathfrak u}}
\newcommand{\ttt}{{\mathfrak t}}
\theoremstyle{plain}
\newtheorem{theorem}{Theorem}[section]
\newtheorem{lemma}[theorem]{\bf Lemma}
\newtheorem{corollary}[theorem]{\bf Corollary}
\newtheorem{proposition}[theorem]{\bf Proposition}
\theoremstyle{definition}
\newtheorem{definition}[theorem]{\bf Definition}
\newtheorem{question}[theorem]{\bf Question}
\theoremstyle{remark}
\newtheorem{remark}[theorem]{\bf Remark}
\newtheorem{notation}[theorem]{\bf Notation}
\newtheorem{notationassumptions}[theorem]{\bf Notation and Assumptions}
\renewcommand{\deg}[1]{\ensuremath{\text{deg}(#1)}}
\newcommand{\N}{\mathbb{N}}
\newcommand{\Z}{\mathbb{Z}}
\newcommand{\Q}{\mathbb{Q}}
\newcommand{\R}{\mathbb{R}}
\newcommand{\QQ}{\mathfrak{Q}}
\newcommand{\Gal}[2]{\text{Gal}(#1/#2)}
\newcommand{\at}{\char'100}
\def\Nn{{\mathfrak N}}
\newcommand{\dom}[1]{\text{dom}(#1)}
\def\s01{\ensuremath{\Sigma^0_1}}
\def\d02{\ensuremath{\Delta^0_2}}
\def\phi{\varphi}
\title{On existential definitions of c.e.  subsets of rings of functions of characteristic 0}
\author{Russell Miller
\& Alexandra Shlapentokh}\thanks{The first author was partially supported by Grant \# DMS -- 1362206 from
the National Science Foundation, by Grant \# 581896 from the Simons Foundation,
by the Queens College Research Enhancement Program, and by several grants from
The City University of New York PSC-CUNY Research Award Program.
The second author was partially supported by Grant \# DMS -- 1161456 from
the National Science Foundation.}
\begin{document}

\begin{abstract}
We extend results of Denef, Zahidi, Demeyer and the second author to show the following.
\begin{enumerate}
\item Every c.e.\ set of integers has a single-fold Diophantine definition over the ring of integral functions of any function field of characteristic 0.
\item Every c.e.\ set of integers has a single-fold Diophantine definition over a polynomial ring over integral domain $R$ characteristic 0.
\item All c.e.\ subsets of polynomial rings over rings of totally real integers have finite-fold Diophantine definitions.  (These are the first examples of infinite rings with this property.)
\item Let $K$ be a one-variable function field over a  field of constants $k$, and let $\pp$ be any prime of $K$.  If $k$ is algebraic over $\Q$ and for some odd prime $p$  embeddable into a finite extension of $\Q_{\pp}$,  then the valuation ring of $\pp$ has a Diophantine definition over $K$. If $k$ is embeddable into a real field, then valuation rings are existentially definable for ``almost all'' primes.
\item Let $K$ be a one-variable function field over a number field and let $\calS$ be a finite non-empty set of its primes.  Then all c.e.\ subsets of $O_{K,\calS}$ are Diophantine over $O_{K,\calS}$.  (Here $O_{K,\calS}$ is the ring of $\calS$-integers or a ring of integral functions.)
\end{enumerate}
\end{abstract}

\maketitle

\thispagestyle{empty}
\section{Introduction}
In 1969, building on earlier work by Martin Davis, Hilary Putnam and Julia Robinson, Yuri Matiyasevich demonstrated the impossibility of solving Hilbert's Tenth Problem.    In doing so, he also completed a proof of the theorem asserting that Diophantine (or existentially definable in the language of rings) sets  and computably enumerable sets of integers were the same.  In other words, it was proved that for every positive integer $n$, every computably enumerable subset of $\Z^n$ had a Diophantine definition over $\Z$.
We describe the notions of a Diophantine definition and a Diophantine set in a more general setting.
\begin{definition}
Let $R$ be a commutative ring and let $n$ be a positive integer.  In this case a set $A \subset R^n$ is called Diophantine over $R$ if for some $m>0$ and some polynomial 
\[
f(T_1,\ldots, T_n,X_1,\ldots,X_m) \in R[\bar T,\bar X]
\]
 we have that 
for all $(t_1,\ldots,t_n) \in R^n$ it is the case that
\[
(t_1,\ldots,t_n) \in A \mbox{ if and only if } \exists x_1, \ldots,x_m \in R \mbox{ such that } f(t_1,\ldots,t_n,x_1,\ldots,x_m)=0.
\]
The polynomial $f(\bar T,\bar X)$ is called a Diophantine definition of $A$ over $R$.
\end{definition}
If a set $A$ is Diophantine over $R$ and for every $\bar t \in A$ we have that $\bar x$ as above is unique, we say that $f(\bar T, \bar X)$ is a {\it single-fold} definition of $A$.  If for every $\bar t \in A$ we have that there are only finitely many $\bar x$ as above, we say that $f(\bar T, \bar X)$ is a {\it finite-fold} definition of $A$. 
\begin{question}
Does every c.e.\ set of integers have a finite-fold Diophantine definition over $\Z$?
\end{question}
The answer to this question, raised by Yuri Matiyasevich almost immediately after his solution to Hilbert's Tenth Problem, is unknown to this day.  The issue of finite-fold representation is of more than just esoteric interest because of its connection to many other questions.  For an extensive survey of these connections we refer the reader to a paper of Matiyasevich (\cite{Mat10}).  Here we would like to give just one example that can be considered a generalization of Hilbert's Tenth Problem.  

Let ${\Nn}=\{0,1, \ldots, \aleph_0\}$ and let $\Mm$ be any nonempty proper subset of $\Nn$.  Let $\calP(\Mm)$ be the set of polynomials $P$ with integer coefficients such that the number of solutions to the equation $P=0$ is in $\Mm$. Martin Davis showed in \cite{Da3} that $\calP(\Mm)$ is undecidable.  If we ask whether $\calP(\Mm)$ is c.e, then the answer is currently unknown.  At the same time, if we replace polynomials by exponential Diophantine equations, then we can  answer the question.  Craig Smory\'{n}sky in \cite{Smory} proved that  $\calE(\Mm)$ is c.e.\ if and only if $\Mm=\{\alpha|\alpha \geq \beta\}$ for some finite $\beta$.  (Here $\calE(\Mm)$ is a collection of exponential Diophantine polynomials with positive integer coefficients such that if an exponential Diophantine polynomial $E \in \calE(\Mm)$, then the number of solutions to the equation $E=0$ is in $\Mm$.)  Smory\'{n}sky's proof relied on a result obtained by Matiyasevich  in \cite{Mat77} that every computably enumerable set has a single-fold {\it exponential} Diophantine definition.  One would expect a similar result for (non-exponential) Diophantine equations if the finite-fold question is answered affirmatively.  

Matiyasevich also proved that
to show that all c.e.\ sets of integers have single-fold (or finite-fold) Diophantine definitions it is enough to show that the set of pairs  $\{(a, b) \in \Z_{>0}^2|b=2^a\}$ has a single-fold (finite-fold) Diophantine definition.  (This will not be surprising to readers familiar with the history of Hilbert's Tenth Problem.)

Unfortunately, the finite-fold question over $\Z$ remains out of reach at the moment, as with many other Diophantine questions.  In Section \ref{intsingle} of this  paper, we take some first timid steps in the investigation of this issue by considering it in a more hospitable environment over function fields of characteristic 0, as described  in Section \ref{sec:function}.  We extend the results of the second author from \cite{Sh34} to show that over any ring of integral functions (otherwise known as a ring of $\calS$-integers)  any c.e. set of rational integers has a single-fold Diophantine definition (see Theorem \ref{thm:singlefoldZ}).    We also show that over any polynomial ring over an integral domain $Z$ of characteristic 0 it is possible to give a single-fold Diophantine definition for every c.e.\ set of integers (see Theorem \ref{thm:together}).

Using these results on single-fold definability of $\Z$ and c.e. sets of integers, following results of   Jan Denef from \cite{Den0} and Karim Zahidi from \cite{Zahidi2000}, we show in Section \ref{polyce} that all computably enumerable subsets of a polynomial ring over a ring of integers of a totally real number field are finite-fold existentially definable.  As far as we know, this is the first example of this kind. (See Theorem \ref{Zahidi-finitefold}.)

In Section \ref{sec:gence} we generalize results of Jeroen Demeyer from \cite{Demeyer10} to show that all c.e.\ subsets of rings of integral functions over number fields are Diophantine (see Theorem \ref{thm:extce}).   In order to do so, we needed to generalize the earlier treatments (given in the papers \cite{MB3} of Laurent Moret-Bailly and \cite{Eispadic} of Kirsten Eisentraeger) of definability of integrality at a degree one valuation over a function field of characteristic zero where the constant field is a number field.
Both of those papers in turn extend results of H. K. Kim and Fred Roush from \cite{K-R3} where the two authors give a Diophantine definition of integrality at a valuation of degree 1 over a rational function field with a constant field embeddable into a $p$-adic field.  (Such constant fields include all number fields.)    The papers of Moret-Bailly and Eisentraeger are primarily concerned with extending results pertaining to Hilbert's Tenth Problem and so they extend the results of Kim and Roush just enough for their arguments to go through, by showing the following for a function field $K$ over a field of constants $k$ as described above:  if $T$ is a non-constant element of $K$ and the pole $\qq$ of $T$ splits completely into distinct primes in the extension $K/k(T)$, then there exists a Diophantine subset of $K$ such that all rational functions in that subset are integral at $\qq$.  

In contrast, our proof required a Diophantine definition of the valuation ring of a single factor of $\qq$ in $K$. In order to obtain such  a definition we reworked the original construction of Kim and Roush.  In this paper we show that the valuation ring of ``almost'' any prime of a function field $K$ of characteristic 0 is existentially definable over a function field with a constant field algebraic over $\Q$ and embeddable into a finite extension of  $\Q_p$ for $p \ne 2$ or $\R$.  The "almost" part applies only to the case where we have to use the fact that the constant  field under consideration is embeddable into $\R$.  If the constant field is embeddable into a finite extension of $\Q_p$, with $p \ne 2$, then we can give an existential definition (with a parameter, of course) of {\it any} valuation ring.  We also should note here that the class of constant fields described above contains an infinite subset of non-finitely generated fields. (See Section \ref{section:order}.)


\section{Number Fields, Function Fields and Rings}
\label{sec:function}
Throughout this paper, by a function field $K$  we will mean a finite extension of a rational function  field $k(T)$, where $T$ is transcendental over a field $k$ of characteristic 0.  By the constant field of $K$ we will mean the algebraic closure of $k$ in $K$.  (In our case we will often have a situation where the algebraic closure of $k$ in $K$ is equal to $k$ by construction.)  

  By a prime $\pp$ of $K$, we will mean the maximal  ideal of the valuation ring $R_v$ corresponding to one of the valuations $v$ defined on $K$. (See \cite{C} for more details on valuations of function fields.)  Given $f \in R_v, f \ne 0$, we  define $\ord_{\pp}f$ to be the largest non-negative integer $n$ such that $f \in \pp^n$.  If $f \not \in R_v$, then $\frac{1}{f} \in R_v$, and we define $\ord_{\pp}f=-\ord_{\pp}\frac{1}{f}$. We define the order of 0 to be infinity.

 For the field of constants we will most often select some algebraic extension of $\Q$.  When such an extension is finite, the field is called a number field.  One can also consider all the possible embeddings of $k$ into $\widetilde \Q$, the algebraic closure of $\Q$ inside $\mathbb C$.  If all the embeddings are contained in $\R$, then the field is called totally real.  If a field is algebraic over $\Q$ and has an embedding into $\R$, then we will call it formally real.

\section{Single-fold Diophantine Representations of C.E.\ Sets of Integers over Rings of $\calS$-integers of Function Fields of Characteristic 0}
\label{intsingle}
In this section we describe a finite-fold Diophantine definition of c.e.\ sets of integers over rings of integral functions. Before we do that, we have to reconsider certain old methods of defining sets to make sure they produce single-fold definitions.  We start with the issue of intersection of Diophantine sets.
\subsection{Single-Fold and Finite-Fold Definition of ``And''}
As long as we consider rings whose fraction fields are not algebraically closed, we can continue to use the ``old'' method of combining several equations into a single one without introducing extra solutions, as in Lemma 1.2.3 of \cite{Sh34}.  More specifically we have the following proposition.
\begin{proposition}
Let $R$ be an integral domain such that its fraction field is not algebraically closed.   Let $k \in \Z_{>0}$ and  $A, B, C \subset R^k$ be Diophantine subsets of $R^k$ such that the sets $B$ and $C$ have single (finite) fold definitions, and $A=B\cap C$.  Then $A$ has a single (finite) fold definition.  

More specifically, if we let $h(T) = a_0 + a_1T + \ldots +T^n$ be a polynomial without roots in the fraction field of $R$, let $\bar x=(x_1,\ldots, x_k), \bar z =(z_1,\ldots, z_m)$,  let $f(\bar x, \bar z)$ be a single (finite) fold definition of $B$, and  let $g(\bar x, \bar z)$ be a single (finite) fold definition of $C$, then 
\[
h(\bar x, \bar z)=a_0f(\bar x, \bar z)^n + a_1f(\bar x, \bar z)^{n-1}g(\bar x, \bar z) + \ldots +g(\bar x, \bar z)^n
\]
 is a single (finite) fold definition of $A$.
\end{proposition}

The proof of this proposition is the same as for Lemma 1.2.3 of \cite{Sh34}.

\subsection{Pell Equations over Rings of Functions of Characteristic 0}
Next we take a look at the old workhorse of Diophantine definitions: the Pell equation.  It turns out that in the context of defining integers over rings of functions this equation produces ``naturally'' single-fold definitions.
\begin{lemma}(Essentially Lemma 2.1 of \cite{Den5}, or Lemma 2.2 of  \cite{Sh90} ) 
\label{Pell1}
Let $Z$ be an integral domain of characteristic not equal to 2. Let $f, g, v \in Z[x],  v, x$ transcendental over $Z$.  Let $(f_n(v),g_n(v)) \in Z[x]$ be such that  $ f_n(v) - (v^2 - 1)^{1/2}g_n(v) = (v - (v^2 - 1)^{1/2})^n.$  (In \cite{Sh90} there is a typographical error in this equation: the ``square'' is misplaced on the right-hand side.)  In this case
\begin{enumerate}
\item $\deg{f_n} = n\cdot \deg{v}, \deg{g_n} = ( n - 1)\cdot\deg{v}$,
\item $ \ell \mbox{ dividing }n  \mbox{ is equivalent to }g_{\ell} \mbox{ dividing } g_n$,
\item The  pairs $(\pm f_n,\pm g_n)$ are all the solutions to $f^2 - (v^2 - 1)g^2 = 1$ in $Z[x]$
\end{enumerate}
\end{lemma}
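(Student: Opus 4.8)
The plan is to work inside the quadratic ring extension obtained by adjoining $\theta := (s^2-1)^{1/2}$ to $R[x]$, and to read off all three statements from the multiplicative behaviour of the conjugate units $\alpha := s-\theta$ and $\bar\alpha := s+\theta$. Since $\alpha\bar\alpha = s^2-\theta^2 = 1$, these are norm-one units, and conjugating $\alpha^n = f_n - \theta g_n$ gives $\bar\alpha^n = f_n + \theta g_n$; multiplying the two immediately yields $f_n^2 - (s^2-1)g_n^2 = 1$, so each $(f_n,g_n)$ is automatically a solution and I only have to pin down degrees, divisibility, and completeness. Expanding $\alpha^{n+1}=\alpha\cdot\alpha^n$ and $\alpha^{m+n}=\alpha^m\alpha^n$ produces the recurrence $f_{n+1}=sf_n+(s^2-1)g_n$, $g_{n+1}=f_n+sg_n$ together with the addition laws $f_{m+n}=f_mf_n+(s^2-1)g_mg_n$ and $g_{m+n}=f_mg_n+f_ng_m$, with $f_0=1,\,g_0=0,\,f_1=s,\,g_1=1$; these are the only algebraic identities the argument needs.

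For (1) I would induct on $n$ using the recurrence, tracking the leading coefficient in $x$. Writing $d=\deg{s}$ and letting $c\neq 0$ be the leading coefficient of $s$, an easy induction shows that for $n\geq 1$ the leading coefficient of $f_n$ is $2^{n-1}c^{n}$ in degree $nd$, while that of $g_n$ is $2^{n-1}c^{n-1}$ in degree $(n-1)d$. The only thing to check is that the top-degree contributions from the two summands in each recurrence add rather than cancel, and this is precisely where $\mathrm{char}\,R\neq 2$ enters: the factor $2^{n-1}$ never vanishes, so no degree is lost. This gives $\deg{f_n}=n\,\deg{s}$ and $\deg{g_n}=(n-1)\deg{s}$.

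For (2) the forward implication is a one-line induction: if $g_\ell\mid g_{m\ell}$, then $g_{(m+1)\ell}=f_{m\ell}g_\ell+f_\ell g_{m\ell}$ is again divisible by $g_\ell$. For the converse I pass to the polynomial ring over the fraction field $F$ of $R$, which is a principal ideal domain, so ordinary gcd arithmetic is available. The relation $f_m^2-(s^2-1)g_m^2=1$ shows $f_m$ and $g_m$ are coprime; combined with $g_\ell\mid g_{q\ell}$ this makes $g_\ell$ coprime to $f_{q\ell}$. Writing $n=q\ell+r$ with $0\le r<\ell$ and reducing the addition law $g_n=f_{q\ell}g_r+f_rg_{q\ell}$ modulo $g_\ell$ gives $g_\ell\mid f_{q\ell}g_r$, hence $g_\ell\mid g_r$; but part (1) forces $\deg{g_r}=(r-1)\deg{s}<\deg{g_\ell}$ whenever $1\le r<\ell$, so $g_r$ must vanish, which happens only for $r=0$. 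Thus $\ell\mid n$.

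The real work is (3). Here I would view $F(x,\theta)$ as the function field of $y^2=s^2-1$ and analyse the infinite place of $F(x)$. Because the leading coefficient $c^2$ of $s^2-1$ is a square and its pole order $2d$ is even (again using $\mathrm{char}\neq 2$), this place splits into two degree-one places $\infty_+,\infty_-$, with $\alpha$ having a zero of order $d$ at $\infty_+$ and a pole of order $d$ at $\infty_-$. Any solution $\beta=f-\theta g$ is integral over $R[x]$ with inverse $\bar\beta=f+\theta g$ also integral, so $\beta$ is a unit whose zeros and poles occur only at $\infty_\pm$; the solutions therefore form a group on which $v:=\ord_{\infty_+}$ is a homomorphism. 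Its kernel is $\{\pm1\}$, since $v(\beta)=0$ forces $\beta$ to have no zeros or poles at all, i.e.\ to be a constant with $\beta^2=1$. The crucial estimate is that for a nontrivial solution with $g\neq 0$ one has $v(\beta)=\deg{f}=\deg{s}+\deg{g}\geq\deg{s}$, obtained by comparing $v$ on $\beta$, $\bar\beta$, and $2f=\beta+\bar\beta$. Hence the least positive value taken by $v$ is exactly $\deg{s}=v(\alpha)$, so the solution group modulo $\{\pm1\}$ is infinite cyclic, generated by $\alpha$; unwinding, every solution equals $\pm\alpha^{k}=\pm(f_{|k|},\pm g_{|k|})$. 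I expect this completeness step, and specifically the verification that no fundamental solution smaller than $\alpha$ exists, to be the main obstacle, and the degree identity $v(\beta)=\deg{s}+\deg{g}$ is exactly what rules such a solution out.
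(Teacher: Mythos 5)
Your proof is correct, but there is no proof in the paper to compare it against: Lemma \ref{Pell1} is stated with an attribution (``essentially'' Lemma 2.1 of Denef, Lemma 2.2 of Shlapentokh) and the argument is left to those sources, so the honest comparison is with them. The proofs there are elementary: parts (1) and (2) by the same recurrence and addition-law bookkeeping you use, and the completeness statement (3) by descent on degrees --- for a solution $(f,g)$ with $g\neq 0$ one has $\deg f = \deg s + \deg g$, and multiplying $f-\theta g$ by $s+\theta$ or $s-\theta$ (the right choice forces leading-term cancellation, using characteristic $\neq 2$) produces a solution whose $g$-component has degree $\deg g - \deg s$, so induction terminates in the constant solutions $\pm 1$. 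Your part (3) instead runs through the valuation theory of the function field $F(x,\theta)$: the divisor of a solution is supported on the two degree-one places over infinity, and $v=\ord_{\infty_+}$ is a homomorphism on the solution group whose kernel is $\{\pm 1\}$ and whose image the degree estimate pins down to $\deg(s)\cdot\Z$. That is more machinery than the cited proofs need, but it is more conceptual, exhibits the whole solution group $\{\pm 1\}\times \Z$ at one stroke, and is precisely the style of argument this paper itself uses for Proposition \ref{le:exist} (rank one of $H_{K,d,\calS}$), so it fits the surrounding material well and generalizes to rings of $\calS$-integers where degree descent is unavailable. Three small repairs you should make: (i) verify that $s^2-1$ is not a square in $F(x)$ --- if $s^2-1=w^2$ then $(s-w)(s+w)=1$ makes $s-w$ and $s+w$ units of $F[x]$, hence $s$ constant, contradicting $s\notin R$ --- since otherwise $F(x,\theta)$ is not a field and your two places do not exist; (ii) the estimate ``$v(\beta)=\deg f$'' should read $|v(\beta)|=\deg f$ (it is false as written for $\beta=\bar\alpha^{\,n}$, where $v(\beta)<0$), and this absolute-value form is all the argument needs, since the image of $v$ is then a subgroup of $\Z$ containing $v(\alpha)=\deg s$ and containing no element of absolute value strictly between $0$ and $\deg s$; (iii) what you actually prove --- and what is actually true --- is that the solutions are $(\pm f_n, \pm g_n)$ with independent signs, a precision the lemma's wording elides and which its ``essentially'' is meant to cover.
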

Below we use the following notation.
\begin{notation} \textup{}
\label{notoriginal}
\begin{itemize}
\item  Let $K$ denote a function field of characteristic 0 over the constant field $k$.
\item Let $\calS=\{\pp_1, \ldots, \pp_s\}$ be a finite non-empty set of primes of $K$.
\item Let $O_{K,\calS}=\{x  \in K~|~(\forall \pp \not \in \calS)~\ord_{\pp}x \geq 0 \}$ be the ring of $\calS$-integers of $K$.
\item For $d \in O_{K,\calS}$ such that $d$ is not a square in $K$, let 
\[
H_{K,d,\calS}= \{x-d^{1/2}y~|~x,y \in O_{K,\calS}~\&~x^2-dy^2=1\}.
\]
\end{itemize}
\end{notation}
The next proposition follows from Lemmas 2.3,2.4, 2.5, 3.1--3.3 of \cite{Sh92}.
\begin{proposition}
\label{prop:exist}
There exists $a \in O_{K,\calS}$ satisfying the following conditions.
\begin{itemize}
\item $\ord_{\pp_1}a<0$,
\item $\ord_{\pp_i}(a-1)>0$ and $\ord_{\pp_i}(a-1)$ is odd for $i=2,\ldots, s$.
\item In $K(\sqrt{a^2-1})$, the prime $\pp_1$ splits into the product $\qq_{\infty}\qq$.
\item The element $T=a-\sqrt{a^2-1}$ of $K(\sqrt{a^2-1})$ has a zero at $\qq$,  a pole at $\qq_{\infty}$, and no other zeros or poles. 
\item For $d=a^2-1$, we have that $H_{K,d,\calS}$ is a cyclic group generated by $a-d^{1/2}$ modulo $\{\pm 1\}$, and  $a-1$ and $a-\sqrt{a^2-1} - b$ (for any $b \in \Z_{\not =0}$) are not  units of $O_{K,\calS}[\sqrt{a^2-1}]$.
\item Let $f_n, g_n$ be as in Lemma \ref{Pell1}. Then $g_{-n} = -g_n$, $f_{-n}=f_n$, and 
\[
g_n \equiv n \mod (a - 1)
\]
in the ring $\Z[a]$.  (Note that $f_n, g_n \in \Z[a]$.)
\end{itemize}
\end{proposition}
\begin{remark}
\label{rem:signofepsilon}
Let $u, w \in O_{K,\calS}$, let $a$ be as in Proposition \ref{prop:exist}.  Finally assume $u^2- (a^2-1)w^2=1$.  Then what can we say about $u-\sqrt{a^2-1}w$?  By Proposition \ref{prop:exist}, there exists $n \in \Z$ such that $u=\pm f_n, w=\pm g_n$, where 
\[
(a-\sqrt{a^2-1})^n=\varepsilon^n=f_n -\sqrt{a^2-1}g_n.  
\]
Thus, we have four possibilities: 
\[
u-\sqrt{a^2-1}w=f_n-\sqrt{a^2-1}g_n=\varepsilon^n, n \in \Z_{\geq 0}, 
\]
\[
u-\sqrt{a^2-1}w=f_n+\sqrt{a^2-1}g_n=\varepsilon^n, n \in \Z_{\leq 0}, 
\]
\[
u-\sqrt{a^2-1}w=-f_n-\sqrt{a^2-1}g_n=-\varepsilon^n, n \in \Z_{\leq 0},
\]
\[
 u-\sqrt{a^2-1}w=-f_n+\sqrt{a^2-1}g_n=-\varepsilon^n, n \in \Z_{\geq 0}. 
\]
Alternatively, 
\[
u-\sqrt{a^2-1}w=\pm \varepsilon^n, n \in \Z. 
\]

\end{remark}
\begin{lemma}(Essentially Lemma 3.4 of \cite{Sh92}.)
\label{le:containslocal}
   Let  $R$ be any subring  of  $O_{K,\calS}$ containing a local subring of $\Q$. (In particular, $R$ can be equal to $O_{K,\calS}$.) Then there exists a subset $C$ of $R$ that contains only constants, includes $\Z$, and is single-fold Diophantine  over $R$.
\end{lemma}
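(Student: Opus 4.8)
The plan is to use the Pell equation attached to the element $a$ from Proposition \ref{le:exist} as a ``ruler'' that measures rational integers, and to recover $\Z$ (sitting inside a possibly larger set of constants) through the congruence $w_n\equiv n\pmod{(a-1)}$ recorded just above. Fix $a\in O_{K,\calS}$ as in Proposition \ref{le:exist}, set $d=a^2-1$, and recall the solutions $(u_n,w_n)$ determined by $u_n-\sqrt{d}\,w_n=(a-\sqrt d)^n$, which satisfy $u_n\equiv 1\pmod{(a-1)}$ and $w_n\equiv n\pmod{(a-1)}$ in $\Z[a]$. Using $a$ as a coefficient of the defining polynomial requires $a\in R$; I take this for granted, as it is arranged in \cite{Sh92} by first selecting $a$ inside $R$, the containment of a local subring of $\Q$ guaranteeing $\Z\subseteq R$. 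I would then define
\[
C=\{\,t\in R:\ \exists x,y\in R\ \ x^2-dy^2=1\ \wedge\ (a-1)\mid(x-1)\ \wedge\ (a-1)\mid(y-t)\,\},
\]
where ``$(a-1)\mid z$'' abbreviates $\exists r\in R\ \ z=(a-1)r$. Each conjunct is Diophantine over $R$, and the proposition on combining equations (the ``And'' lemma opening this section) folds the whole condition into a single polynomial equation while preserving single-foldness, so it suffices to analyse the displayed condition.

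The inclusion $\Z\subseteq C$ is immediate: for $n\in\Z\subseteq R$ take $(x,y)=(u_n,w_n)\in\Z[a]\subseteq R$; then $x-1=u_n-1$ and $y-n=w_n-n$ both lie in $(a-1)\Z[a]\subseteq(a-1)R$ by the congruences above, so $n\in C$. For the uniqueness of the witness, note that given $t\in C$ with witness $(x,y)$, the rank-one description of $H_{K,d,\calS}$ in Proposition \ref{le:exist} forces $x-\sqrt d\,y=\pm(a-\sqrt d)^n$, i.e.\ $(x,y)=\pm(u_n,w_n)$ for some $n\in\Z$; the conjunct $(a-1)\mid(x-1)$ discards the sign $-1$ (since $-u_n\equiv-1\not\equiv1$), pinning $(x,y)=(u_n,w_n)$, and then $t\equiv w_n\equiv n\pmod{(a-1)}$ recovers $n$ as the residue of $t$ in $\Z[a]/(a-1)\cong\Z$. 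Single-foldness then follows from the injectivity of $n\mapsto(u_n,w_n)$.

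The hard part—and the real content of the lemma—is showing that a witnessed $t$ is genuinely a constant, i.e.\ has a pole at no prime, not merely at the zeros of $a-1$. By Proposition \ref{le:exist} the solution $w_n$ has zeros and poles only above $\pp_1$, and the congruence $t\equiv n\pmod{(a-1)}$ controls $t$ only at the zeros of $a-1$ lying outside $\calS$; a priori it says nothing about possible poles of $t$ at $\pp_1,\dots,\pp_s$. Ruling these out is exactly where the non-unit assertions of Proposition \ref{le:exist}—that $a-1$ and each $a-\sqrt d-b$ with $b\in\Z_{\neq0}$ fail to be units—are needed: they locate, for every integer $b$, a prime outside $\calS$ at which $a-\sqrt d\equiv b$, and a divisor/degree count against these primes forces the pole divisor of $t-n$ to be trivial. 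Carrying out this pole bookkeeping—reconciling the behaviour at $\pp_1$ with the absence of poles at $\pp_2,\dots,\pp_s$—is the step I expect to be the main obstacle, since it is the point at which one must go beyond the bare congruence and use the full strength of the rank-one Pell group. This is in essence the computation behind Lemma 3.4 of \cite{Sh92}, whose template I would follow.
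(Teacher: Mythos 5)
Your construction does not establish the lemma: the set $C$ you define is not a set of constants, and this is not a deferred verification that ``pole bookkeeping'' could supply --- the definition itself admits non-constant elements. Indeed, for any $n\in\Z$ and \emph{any} $s\in R$, the element $t=n+(a-1)s$ lies in your $C$: take $(x,y)=(u_n,w_n)$, so that $(a-1)\mid(x-1)$ and $(a-1)\mid(y-t)$, since $w_n-t=(w_n-n)-(a-1)s$. Concretely, $t=1+(a-1)a$ is in $C$ with witness $(u_1,w_1)=(a,1)$, yet it is non-constant because $a$ has a pole at $\pp_1$. Your three conjuncts constrain $t$ only through its residue class modulo $(a-1)R$, so no divisor or degree count can recover constancy; the non-unit assertions of Proposition \ref{le:exist} are used elsewhere (e.g.\ in Theorem \ref{thm:exp}, to force $b^n=\pm c$), not to bound the poles of $t$. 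Your plan is also circular relative to the structure of the paper: Pell equation plus congruence modulo $\varepsilon-1$ is exactly the proof of Theorem \ref{thm:singlefoldZ}, in which ``$c$ is a constant'' appears as a \emph{separate hypothesis} --- supplied precisely by the present lemma --- and is what licenses the step ``a constant divisible by a non-unit must vanish.'' Constancy is an input to the Pell argument, not an output of it. A further defect: the lemma concerns an arbitrary subring $R\subseteq O_{K,\calS}$ containing a local subring of $\Q$; such an $R$ need not contain $a$ (it could, for instance, consist of constants, or be generated by a single non-constant function), so even writing your defining equations over $R$ is unjustified.

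The paper's proof uses an entirely different and more elementary mechanism, which requires no Pell equation and no element outside $\Q$. With $n=|\calS|$ and $\pi$ the product of the non-invertible rational primes (or $\pi=1$ if $\Q\subseteq R$), it puts $x\in C$ if and only if the system $j_i\,(\pi x^2+i\pi+1)=1$, $i=1,\dots,n+1$, is solvable with $j_i\in R$. If $x$ were non-constant, each of the $n+1$ elements $\pi x^2+i\pi+1$ would be a non-constant unit of $O_{K,\calS}$, hence would have all of its zeros among the $n$ primes of $\calS$; by pigeonhole two of them would share a zero, which is impossible because their differences are nonzero constants. Conversely, for $x\in\Z$ each $\pi x^2+i\pi+1$ is a rational number congruent to $1$ modulo every non-inverted prime, so its inverse lies in the local subring of $\Q\subseteq R$; and since inverses in an integral domain are unique, the witnesses $j_i$ are uniquely determined by $x$, giving single-foldness. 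Note that this defines a set of constants merely \emph{containing} $\Z$, which is all the lemma claims; pinning the value down to $\pm n$ is done afterwards, in Theorem \ref{thm:singlefoldZ}, using the Pell machinery you attempted to use here.
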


\begin{proof}
    We remind the reader that the $\calS$ contains $s$ primes. Let $\pi$ be the product of all non-invertible rational primes (or 1, if $R$ contains $\Q$), and let $C \subset R$ be the set  defined by the 
following equations over $R$:
\begin{equation}
\label{invert}
\left \{
\begin{array}{c}
j_1(\pi x^2 +1)=1,     \\
\ldots         \\                     
j_{s+1}(\pi x^2+(s+1)\pi +1)=1
\end{array}
\right.
\end{equation}

We claim that  System \eqref{invert} has solutions in $R$ only if $x$ is a constant, while
conversely,  if $x\in \Z$, then  these equations have solutions in $R$. Indeed, if $x$ is not a constant, neither are 
$x^2 + \pi +1, \ldots, x^2+ (s+1)\pi +1$.   Therefore, since they are invertible  in  $O_{K,\calS}$, they  all must  have zeros at  
valuations  of  $\calS$. However, these $s+1$ elements do not share any zeros, and
there are $s$ valuations  in  $\calS$ and  $s + 1$ elements  under  consideration.  This
implies that two of them must share the same zero, which is impossible since their differences are 
constant. The converse is obvious: if $x \in \Z$, then $\pi x^2+\pi r +1$ is invertible for each $r \in \Z$.  Please note that given $x \in O_{K,\calS}$, if System \eqref{invert} has solutions, then these solutions are unique.
\end{proof}

\begin{notation}
Let $J(x)$ denote the system of equations \eqref{invert}.  
 \end{notation}
 We will use this system to give a single-fold Diophantine definition of $\Z$ over $O_{K,\calS}$.
\begin{theorem}
\label{thm:singlefoldZ}
$\Z$ has a single-fold Diophantine definition over $O_{K,\calS}$.
\end{theorem}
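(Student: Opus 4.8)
The plan is to carve $\Z$ out of the set $C$ of constants produced by Lemma~\ref{le:containslocal}, using the Pell machinery of Proposition~\ref{le:exist} together with the congruence $w_n \equiv n \pmod{(a-1)}$. First I would apply Lemma~\ref{le:containslocal} with $R = O_{K,\calS}$ (which contains $\Q$, hence a local subring of $\Q$) to fix a single-fold Diophantine set $C \subseteq O_{K,\calS}$ consisting only of constants and containing $\Z$. Next I would fix $a \in O_{K,\calS}$ as in Proposition~\ref{le:exist} and set $d = a^2 - 1$; by that proposition the solutions in $O_{K,\calS}$ of $X^2 - dY^2 = 1$ are exactly the pairs $(x,y)$ with $x - d^{1/2}y = \pm(a - d^{1/2})^n = \pm(u_n - d^{1/2}w_n)$, and $w_n \equiv n \pmod{(a-1)}$ in $\Z[a]$. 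A one-line computation modulo $(a-1)$, using $a \equiv 1$ and $d = a^2-1 \equiv 0$, also gives $(a - d^{1/2})^n \equiv 1 - n\,d^{1/2}$, so that $u_n \equiv 1$ and $w_n \equiv n \pmod{(a-1)}$.

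I would then propose the definition: $t \in \Z$ if and only if $t \in C$ and there exist $x,y,z,w \in O_{K,\calS}$ satisfying
\[
x^2 - d y^2 = 1, \qquad x - 1 = (a-1)\,w, \qquad y - t = (a-1)\,z.
\]
For the forward inclusion, given $t = n \in \Z$ I take $(x,y) = (u_n,w_n)$ and put $w = (u_n-1)/(a-1)$, $z = (w_n-n)/(a-1)$, which lie in $\Z[a] \subseteq O_{K,\calS}$ by the two congruences above. For the reverse inclusion, suppose $t \in C$ is a constant admitting such witnesses. The Pell equation forces $x - d^{1/2}y = \pm(a-d^{1/2})^m$; reducing this mod $(a-1)$ gives $x \equiv \pm 1$ and $y \equiv \pm m$, so the condition $x \equiv 1 \pmod{(a-1)}$ together with $(a-1) \notdivides 2$ (since $2$ is a unit of $O_{K,\calS}$ while $a-1$ is not) forces the plus sign; hence $y = w_m$ and $t \equiv y \equiv m \pmod{(a-1)}$. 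Thus $(a-1) \divides (t - m)$ in $O_{K,\calS}$, but $t - m$ is a constant, hence a unit unless it is $0$; as $a-1$ is not a unit (Proposition~\ref{le:exist}), we conclude $t = m \in \Z$.

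Single-foldness would follow because every witness is forced. The set $C$ is single-fold by Lemma~\ref{le:containslocal}. The analysis above shows that for $t = n$ the only admissible Pell pair is $(u_n,w_n)$: the sign condition eliminates the competing solution $(-u_n,w_n)$, and then $(a-1)\divides(m-n)$ with $m-n$ a constant forces $m = n$. Finally, once $(x,y)$ is pinned down, the elements $w$ and $z$ are uniquely determined, since $a-1 \neq 0$ in the integral domain $O_{K,\calS}$. I would then fold the three displayed equations together with the equations defining $C$ into a single polynomial equation using the proposition on combining equations (the ``And'' lemma); this preserves single-foldness because the fraction field $K$ of $O_{K,\calS}$ is not algebraically closed and so admits a monic $h$ with no root in $K$.

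The main obstacle I anticipate is not the number-theoretic input, which is packaged into Proposition~\ref{le:exist} and the congruence lemma, but the bookkeeping needed to guarantee a genuine single-fold definition. Two points require care: pinning the sign of the Pell solution so that $x$ is not doubly witnessed, handled by the auxiliary congruence $x \equiv 1 \pmod{(a-1)}$ and the fact that $a-1 \notdivides 2$; and ensuring the quotients by $a-1$ are unique, which rests on $a-1$ being a nonzero non-unit of $O_{K,\calS}$. The non-unit property, supplied by Proposition~\ref{le:exist}, is exactly what also drives the reverse inclusion by collapsing the constant $t-m$ to $0$. One should additionally record that a suitable $a$ exists for the field at hand; under the standing assumptions this follows from the Strong Approximation Theorem, as in the Remark following Proposition~\ref{le:exist}.
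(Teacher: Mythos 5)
Your proposal is correct and takes essentially the same route as the paper: Proposition~\ref{le:exist} pins the Pell solutions down to $\pm(a-\sqrt{a^2-1})^n$, a congruence extracts the exponent, membership in the constant set of Lemma~\ref{le:containslocal} together with the fact that $a-1$ is a non-unit forces the candidate constant to equal that exponent, and single-foldness is preserved via the ``And'' proposition. The only (harmless) difference is bookkeeping: you work modulo $a-1$ inside $O_{K,\calS}$ itself, using $w_n \equiv n \bmod (a-1)$ and the explicit sign-pinning condition $x \equiv 1 \bmod (a-1)$, whereas the paper works modulo $\varepsilon-1$ in $O_{K,\calS}[\sqrt{a^2-1}]$ using $\frac{\varepsilon^n-1}{\varepsilon-1} \equiv n \bmod (\varepsilon-1)$, with the sign excluded implicitly by the divisibility built into that congruence.
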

\begin{proof}
There are several ways to state this proof.  We choose the way that we will later use to produce a single-fold definition of exponentiation for $\Z$.  Let $a \in O_{K,\calS}$ be  as in Proposition \ref{prop:exist} and consider the following equations and conditions:
\begin{equation}
\label{eq:one}
u^2-(a^2-1)w^2=1;
\end{equation}

\begin{equation}
\label{eq:ratio}
c \equiv w \bmod (a-1)
\end{equation}
 in  $O_{K,\calS}$;
\begin{equation}
\label{eq:last}
J(c).
\end{equation} 
Supposed now that Equations \eqref{eq:one}--\eqref{eq:last} are satisfied with variables ranging over $O_{K,\calS}$.   Then, by Proposition \ref{prop:exist} and Lemma \ref{Pell1},  $w =w_n \equiv  n \bmod (a-1)$ for some $n \in \Z$.   Thus, $c \equiv  n \mod (a-1)$ in $O_{K,\calS}$.  Since $a-1$ is not a unit of $O_{K,\calS}$, we have that  $c-n$ is a constant with a zero at some valuation of $K$.  Hence $c= n$.  

Conversely, given $c = n \in \Z$, set $w=w_n$ and observe that all the equations are satisfied.  Note also, that this is the only solution to the equations.
\end{proof}
\begin{notation}
Let $U(c,u,w)$ denote the system of equations \eqref{eq:one}--\eqref{eq:last}.
\end{notation}
We now give a single-fold Diophantine definition of exponentiation.
\begin{theorem}
\label{thm:exp}
The following set has a single-fold definition over $O_{K,\calS}$: $$\{(b,c,d)~|~b, c, d \in \Z_{\ne 0}, d>0, c=b^d\}.$$
\end{theorem}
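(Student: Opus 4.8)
The plan is to build directly on the single-fold framework of Theorem~\ref{thm:singlefoldZ}, reusing the element $a\in O_{K,\calS}$ supplied by Proposition~\ref{le:exist} and the Pell solutions $(u_n,w_n)$ determined by $u_n-\sqrt{a^2-1}\,w_n=\varepsilon^n$, where $\varepsilon=a-\sqrt{a^2-1}$. The engine is the classical Matijasevich congruence relating Pell solutions to powers: setting $M=2ac-c^2-1$, one has
\[
u_n-(a-c)w_n\equiv c^n \pmod{M}
\]
in $O_{K,\calS}[\sqrt{a^2-1}]$. I would prove this by induction on $n\ge 0$. The base cases follow from $(u_0,w_0)=(1,0)$ and $(u_1,w_1)=(a,1)$, which give $u_0-(a-c)w_0=1=c^0$ and $u_1-(a-c)w_1=c=c^1$. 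Since $\varepsilon$ and $a+\sqrt{a^2-1}$ are the roots of $X^2-2aX+1$, both $u_n$ and $w_n$, hence $E_n:=u_n-(a-c)w_n$, satisfy the recurrence $X_{n+1}=2aX_n-X_{n-1}$; and $c^n$ satisfies the same recurrence modulo $M$ because $2ac^n-c^{n-1}-c^{n+1}=c^{n-1}(2ac-c^2-1)=c^{n-1}M$. Thus the congruence propagates.

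To define the set, I would first force the Pell index to read off $\pm d$: using the congruence modulo $\varepsilon-1$ together with the constant-detecting equations of Lemma~\ref{le:containslocal} exactly as in Theorem~\ref{thm:singlefoldZ}, I constrain the index $n$ of the chosen solution so that $n=\pm d$ (with $b,c,d$ themselves pinned to $\Z$ by the single-fold definition of constants from Lemma~\ref{le:containslocal}). By Lemma~\ref{Pell1}(3) the pair $(u_n,w_n)$ is then the unique Pell solution of that index. I then impose $b\equiv u_n-(a-c)w_n \pmod{M}$ in $O_{K,\calS}[\sqrt{a^2-1}]$, which combined with the displayed congruence gives that $b-c^n$ is divisible by $M$. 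The decisive step now replaces Matijasevich's inequality argument with a valuation argument: when $c\neq 0$ the element $M=2ac-c^2-1$ is non-constant (as $a$ is non-constant), so $M$ has a zero at some valuation $\pp$ of $K(\sqrt{a^2-1})$; since every nonzero constant has order $0$ at every valuation, divisibility of the constant $b-c^n$ by $M$ forces $b-c^n=0$. Up to the sign ambiguities inherent in $\varepsilon^{\pm n}$ and in replacing $c$ by $-c$, this says precisely $|b|=|c|^{|d|}$, which is exactly why the statement is phrased with absolute values. Conversely, given $|b|=|c|^{|d|}$ I take $n=|d|$, the unique solution of that index, and the unique quotient witnessing the congruence (uniqueness here uses only that $O_{K,\calS}[\sqrt{a^2-1}]$ is a domain and $M\neq 0$); combining the clauses via the single-fold form of conjunction established at the start of this section yields a single-fold definition.

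I expect the main difficulty to be bookkeeping rather than a single hard idea. The principal obstacles are (i) treating the degenerate cases $c=0$, $c=\pm 1$, and $d=0$ separately, where $M$ may be constant or the power is trivially $\pm 1$, so that the valuation argument still applies or is replaced by a direct equation; and (ii) verifying that every clause is genuinely single-fold and that the sign and absolute-value conventions match up, so that the conjunction defines precisely $\{(b,c,d)\mid b,c,d\in\Z\ \land\ |b|=|c|^{|d|}\}$ and admits no spurious witnesses. The substantive but standard ingredients—the inductive proof of the congruence and the order-at-a-valuation argument forcing the constant difference to vanish—carry the rest.
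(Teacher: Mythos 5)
Your route is genuinely different from the paper's: the paper never uses the classical Matijasevich congruence modulo $M=2ac-c^2-1$; instead it imposes $\varepsilon^n \pm c=(\varepsilon-b)x$ and reads off $\varepsilon^n\equiv b^n \pmod{\varepsilon-b}$, so its modulus is $\varepsilon-b$ inside $O_{K,\calS}[\sqrt{a^2-1}]$, and the crucial non-unit property of that modulus is exactly what Proposition~\ref{le:exist} was formulated to provide. Your inductive proof of the congruence and your pinning of the index $n$ to $\pm d$ via $\varepsilon-1$ are fine and parallel the paper. However, your decisive step contains a genuine gap: you argue that since $M$ is non-constant it ``has a zero at some valuation,'' and hence cannot divide the nonzero constant $b-c^n$. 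That inference is invalid in rings of $\calS$-integers: non-constant does not imply non-unit. For example, in $\Q(t)$ with $\calS$ consisting of the zero and the pole of $t$, the element $t$ is a non-constant unit of $O_{K,\calS}$ and divides every element of the ring. Your divisibility $b-c^n=Mz$ lives in $O_{K,\calS}[\sqrt{a^2-1}]$, where $z$ is allowed to have poles at primes above $\calS$; consequently a zero of $M$ at a prime of (or above) $\calS$ produces no contradiction with $b-c^n$ being a nonzero constant. What your argument actually requires is that $M$ have a zero at some prime \emph{outside} $\calS$, i.e., that $M$ be a non-unit of the ring, and nothing in your write-up establishes this for your modulus -- it is precisely the kind of fact the paper isolates in Proposition~\ref{le:exist} for its moduli $a-1$ and $\varepsilon-b$.

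The gap is fixable using the same hypotheses on $a$. Since $\ord_{\pp_1}a<0$ and $\ord_{\pp_i}a=0$ for $i\geq 2$, the only pole of $M=2ac-c^2-1$ (with $c\in\Z$, $c\neq 0$) is at $\pp_1$, so $M$ has zeros and they can only threaten the argument if they lie at $\pp_2,\dots,\pp_s$. But if $\ord_{\pp_i}M>0$ for some $i\geq 2$, then $a\equiv (c^2+1)/(2c) \bmod \pp_i$, whence $a^2-1\equiv \bigl((c^2-1)/(2c)\bigr)^2 \bmod \pp_i$ is a square in the residue field, contradicting the choice of $a$ in Proposition~\ref{le:exist}. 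Hence every zero of $M$ lies outside $\calS$, and at (a prime above) such a zero your order computation does force $b-c^n=0$. With this lemma inserted -- in effect, re-proving the non-unit statement of Proposition~\ref{le:exist} for your modulus $M$ -- and with the degenerate cases $c=0,\pm 1$, $d=0$ and the sign bookkeeping handled as you indicate, your proof becomes a valid alternative to the paper's: it stays closer to the classical argument over $\Z$, at the price of an extra local computation that the paper's choice of modulus lets it avoid.
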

\begin{proof}
Consider the following system of equations:
\begin{equation}
\label{eq:not0}
b \ne 0, c \ne 0, d \ne 0,
\end{equation}
\begin{equation}
\label{eq:in Z}
U(c,u_c,w_c).
\end{equation}
\begin{equation}
\label{eq2:in Z}
U(b,u_b,w_b),
\end{equation}
\begin{equation}
\label{eq3:in Z}
U(d,u_d,w_d),
\end{equation}
\begin{equation}
\label{eq:equiv1}
\exists n \in \Z, x, y\in O_{K,\calS}[\sqrt{a^2-1}]: \pm \varepsilon^n - c=(\varepsilon - b)x \land d -\frac{\pm \varepsilon^n-1}{\varepsilon-1}=y(\varepsilon-1).
\end{equation}
We start with noting that \eqref{eq:equiv1} implies that $(\varepsilon -1)$ divides $\pm \varepsilon^n -1$ in $O_{K,\calS}$.  Note that by Proposition \ref{Pell1}, we also have that $\varepsilon -1$ is not a unit of $O_{K,\calS}$.  If we choose the ``minus'' option, then we have that $\varepsilon -1$ divides $\varepsilon^n +1$.  Since $\varepsilon -1$ divides $\varepsilon^n-1$, it follows that $\varepsilon-1$ divides $2$, and thus is a unit of $O_{K,\calS}$.  Hence ``minus'' option cannot occur.  Consequently, \eqref{eq:equiv1} can be rewritten as:
\begin{equation}
\label{eq:equiv2}
\exists n \in \Z, x, y\in O_{K,\calS}[\sqrt{a^2-1}]:  (\varepsilon^n - c=(\varepsilon - b)x) \land (d -\frac{\varepsilon^n-1}{\varepsilon-1}=y(\varepsilon-1)).
\end{equation}

From \eqref{eq:equiv2} we deduce that $\varepsilon^n - c \equiv 0 \bmod (\varepsilon -b)$ in $O_{K,\calS}$.  At the same time $\varepsilon^n \equiv   b^n  \bmod (\varepsilon -b)$ in $O_{K,\calS}$.  Therefore, $c \equiv b^n \bmod (\varepsilon-b)$.  By Proposition \ref{prop:exist} we conclude that $(\varepsilon -b)$ is not a unit, and therefore $ b^n-c$ has a zero at a valuation of $K$.  Since $b, c$ are integers, we must infer that $b^n=c$, and $n\geq 0$.  At the same time, also from \eqref{eq:equiv2}, we have that $d \equiv n \bmod (\varepsilon-1)$ in $O_{K,\calS}$.  By the same argument as above we conclude that  $d=n>0$.

Conversely, assuming $b, c, d \in \Z, bcd \not=0, d>0, c=b^d$, it is easy to see that \eqref{eq:equiv1} can be satisfied with only one choice for the sign in front of $\varepsilon^d$.

We now rewrite \eqref{eq:equiv1} over $O_{K,\calS}$:
\begin{equation}
\label{rewritten}
\left \{
\begin{array}{r}
u^2-(a^2-1)w^2=1 \, (\mbox{in other words, } \pm\varepsilon^n =  u - \sqrt{a^2-1}w)\\
u-\sqrt{a^2-1}w - c=(a -\sqrt{a^2-1} -b)(x_1-x_2\sqrt{a^2-1} ) \\
 (\mbox{in other words, } \varepsilon^n -c=(\varepsilon-b)x )\\
d(a -\sqrt{a^2-1} -1) - (u-\sqrt{a^2-1}w-1)=(y_1-\sqrt{a^2-1}y_2)(a -\sqrt{a^2-1} -1)^2 \\
  (\mbox{in other words, }d(\varepsilon - 1)-(\pm \varepsilon^n-1)=y(\varepsilon-1)^2)
\end{array}
\right .
\end{equation}
Thus System \eqref{rewritten}, with all the variables ranging over $O_{K,\calS}$, is equivalent to Conjunction \eqref{eq:equiv1}.
\end{proof}
\begin{notation}
\label{not:exp}
For future reference we will denote the equations \eqref{eq:not0}--\eqref{eq3:in Z} together with \eqref{rewritten} by 
\[
G(a,b,c,d,u, w, x_1,x_2,y_1,y_2).
\]
\end{notation}
\begin{corollary}[Single fold definition of positive integers over $O_{K,\calS}$]
\label{cor:plus}
Let $a$ be as in Proposition \ref{prop:exist}, and let 
\[
\text{Plus}=\{d \in O_{K,\calS}| \exists c,u,w,x_1, x_2, y_1, y_2 \in O_{K,\calS}: G(a,2,c,d,u,w,x_1,x_2,y_1,y_2) \}.
\]
 Then Plus$=\Z_{>0}$, and this Diophantine definition is single-fold.
\end{corollary}
\begin{proof}
Given Theorem \ref{thm:exp}, the only point that needs a proof is the single-fold property of the definition.  Given a $d>0$, to satisfy the system, we must have that 
\[
c=2^d, u=f_d, w=g_d,
\]
\[
 x_1-x_2\sqrt{a^2-1} =\frac{u-\sqrt{a^2-1}w - 2^d}{a -\sqrt{a^2-1} -2},
 \]
 \[
  y_1-\sqrt{a^2-1}y_2=\frac{d(a -\sqrt{a^2-1} -1) - (u-\sqrt{a^2-1}w-1)}{(a -\sqrt{a^2-1} -1)^2}. 
 \]
  Thus the values of all variables are uniquely determined by $d$.

\end{proof}
We also have another corollary to be used in Section \ref{sec:gence}.
\begin{corollary}
\label{s,Ts}
The set $\{(s,u_s, w_s)|s \in \Z_{>0}\}$ is single-fold Diophantine over $O_{K,\calS}$.
\end{corollary} 
\begin{proof}
Consider the set 
\[
\{(s,u,w) \in O_{K,\calS}^3:  \exists c,x_1, x_2, y_1, y_2 \in O_{K,\calS}: G(a,2,c,s,u,w,x_1,x_2,y_1,y_2) \}.
\]
By the same argument as in the proof of Corollary \ref{cor:plus}, the set consists of all the triple in the required form, and given such a triple, the values of all the other variables are determined uniquely.
 \end{proof}
\bigskip
Combining Theorem \ref{thm:singlefoldZ}, Corollary \ref{cor:plus} with a result of Matiyasevich from \cite{Mat77} we now have the following theorem.
\begin{theorem}
\label{thm:finite-foldceZ}
Every c.e.\ set of integers has a single-fold Diophantine definition over $O_{K,\calS}$.
\end{theorem}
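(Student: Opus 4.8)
The plan is to start from Matijasevich's theorem in \cite{Mat77}, which provides, for any c.e.\ set $W \subseteq \Z$, a \emph{single-fold} exponential Diophantine definition: there is an equation built from the variables and integer constants using $+$, $\times$, and exponentiation $u^v$, such that $n \in W$ if and only if the equation has a solution with all unknowns ranging over $\N$, and moreover for each $n \in W$ this solution is unique. The task is then to translate this exponential Diophantine definition into an ordinary (polynomial) finite-fold Diophantine definition over $O_{K,\calS}$, using the single-fold definition of exponentiation-up-to-sign furnished by Theorem \ref{thm:exp} and the single-fold definition of $\Z$ from Theorem \ref{thm:singlefoldZ}.

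First I would eliminate exponentiation. For each exponential subterm appearing in Matijasevich's equation I introduce a fresh variable $e$ together with a base $c$ and an exponent $d$, and I record the relation $e = c^d$. Working from the innermost exponentials outward, this replaces the exponential polynomial by an ordinary polynomial equation together with finitely many relations of the form $e = c^d$. Each such relation is imposed via Theorem \ref{thm:exp}, which defines $\{(b,c,d) : |b| = |c|^{|d|}\}$ single-fold. To convert $|e| = |c|^{|d|}$ into $e = c^d$ I must force $c, d, e$ to be non-negative: since Matijasevich's unknowns range over $\N$, I assert non-negativity of each integer variable by writing it, via the four squares theorem, as $s_1^2 + s_2^2 + s_3^2 + s_4^2$ with each $s_i$ a single-fold-defined element of $\Z$. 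When $c, d \geq 0$ we have $|c| = c$ and $|d| = d$, and imposing $e \geq 0$ turns $|e| = |c|^{|d|}$ into $e = c^d$ (degenerate small bases being settled by a finite case analysis). Integrality of all the variables is guaranteed single-fold by Theorem \ref{thm:singlefoldZ}.

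Next I would count witnesses to confirm finite-foldness. For a fixed $n \in W$, the exponential witness from \cite{Mat77} is unique; each auxiliary variable $e$ is then determined as $c^d$ by the already-determined $c$ and $d$, so it too is unique; and the integrality conditions are single-fold by Theorem \ref{thm:singlefoldZ}. The only source of multiplicity is the representation of each non-negative integer as a sum of four squares, and each non-negative integer admits only finitely many such representations among integers. Hence the total number of solution tuples is a finite product of finite quantities, so the combined system is finite-fold. Finally, I would collapse the resulting finite conjunction of polynomial equations into a single equation by iterating the ``And'' proposition of this section, using a monic polynomial without roots in $K$ (such a polynomial exists since a function field of characteristic $0$ is never algebraically closed); this preserves the finite-fold count.

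The main obstacle I anticipate is the sign management forced by Theorem \ref{thm:exp}, which only captures exponentiation up to absolute value. Keeping the bookkeeping of signs and absolute values consistent across nested exponentials, while simultaneously ensuring that each non-negativity assertion contributes only finitely many witnesses, is the delicate point; the four squares theorem is precisely what reconciles the non-negativity requirement with the finite-fold demand, since no single-fold mechanism for non-negativity over $O_{K,\calS}$ is available.
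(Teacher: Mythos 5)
Your proposal is correct and follows essentially the same route as the paper: the paper's (very terse) argument likewise combines Matijasevich's single-fold exponential Diophantine representations from \cite{Mat77} with the single-fold definitions of $\Z$ (Theorem \ref{thm:singlefoldZ}) and of exponentiation up to sign (Theorem \ref{thm:exp}), using the four squares theorem to define non-negative integers finite-foldly --- which is exactly the one place where single-foldness is lost. Your witness-counting and the use of the ``And'' proposition merely spell out details the paper leaves implicit.
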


\section{Single-Fold Diophantine Representations of C.E.\ Sets of Rational Integers over Polynomial Rings of Characteristic 0.}
 In this section we prove the analog of Theorem \ref{thm:finite-foldceZ}, but for polynomial rings over arbitrary commutative rings with unity of characteristic 0.  If the ring of constants contains $\Q$, then the proof of Theorem \ref{thm:finite-foldceZ} can be used verbatim.  So  the only case that we need to consider is the situation where the ring of constants does not contain  $\Q$.  If the constant ring does not contain $\Q$, it can contain infinitely many non-invertible primes, and therefore we cannot use the definition of a constant set containing all integers from Lemma \ref{le:containslocal}.  For exactly same reason, we cannot use multiplicative inverses to define the set of non-zero elements.  Thus, we will have to modify some parts of the proof of Theorem \ref{thm:finite-foldceZ}.\\
 
 First we need the following basic fact.
\begin{lemma}
\label{le:inverse}
If $a, b$ are non-zero relatively prime integers, then $\frac{1}{b} \in \Z[\frac{a}{b}]$.
\end{lemma}

\begin{proof}
Since $(a,b)=1$ we have that for some $x_1, x_2 \in \Z$ it is the case that $ax_1 +bx_2=1$.   Thus $\displaystyle \frac{1}{b}=\frac{ax_1+bx_2}{b}=x_1\frac{a}{b}+ x_2\in \Z[\frac{a}{b}]$.\\
\end{proof}
Next we deal with the question of saying that an element is not 0.  

\begin{lemma}
\label{le:non-zero}
Let $R$ be a ring of characteristic $0$ such that a rational prime $p$ does not have an inverse in the ring.  In this case, there exists a set $A=A_p$  such that $0 \not \in A$,  $p\Z +1 \subset A$, and if $px+1 \in A\cap \Z$, then $x \in \Z$.
\end{lemma}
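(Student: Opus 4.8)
The plan is to dispatch the first assertion in one line and then reduce the ``moreover'' clause to a single application of Lemma~\ref{le:inverse}.

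First, if $0 \in A$ then $px - 1 = 0$ for some $x \in R$, so $px = 1$ and $x$ is an inverse of $p$ in $R$, contradicting the hypothesis; hence $0 \notin A$.

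For the moreover part, suppose $x \in R$ satisfies $px - 1 = m$ with $m \in \Z$, so that $px = m + 1$ in $R$. Since the rings $R$ relevant here are integral domains (they are polynomial rings or subrings of function fields) and $p \neq 0$ in $R$ because $R$ has characteristic $0$, I may pass to the fraction field and write $x = (m+1)/p$. Let $d = \gcd(m+1, p)$. As $p$ is prime, $d \in \{1, p\}$. If $d = p$ then $p \mid (m+1)$ and $x = (m+1)/p \in \Z$, as desired. If instead $d = 1$, then $m + 1 \neq 0$ (since $\gcd(0,p) = p$) and $(m+1)/p$ is a ratio of nonzero coprime integers lying in $R$, so Lemma~\ref{le:inverse} forces $1/p \in R$, i.e.\ $p$ is invertible in $R$, contradicting the hypothesis. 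Hence only $d = p$ can occur, and $x \in \Z$.

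The step to watch is the passage $px = m+1 \Rightarrow x = (m+1)/p$: it silently uses that $p$ is a non-zero-divisor, equivalently that $R$ embeds in a field. I expect this to be the only real subtlety, and it is genuinely necessary, since the conclusion fails in a ring with $p$-torsion such as $\Z \times \Z/p\Z$, where $(0,1)$ satisfies $p\,(0,1) - 1 = -1 \in \Z$ yet is not an integer. Because every $R$ arising in this paper is a domain, this causes no difficulty. One could equally avoid fraction fields and instead mimic the Bézout computation in the proof of Lemma~\ref{le:inverse}: when $\gcd(p, m+1) = 1$, choosing integers $u, v$ with $up + v(m+1) = 1$ and substituting $m+1 = px$ gives $p(u + vx) = 1$, the same contradiction.
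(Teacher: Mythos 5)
Your proof is correct and takes essentially the same route as the paper's: the first claim is dispatched identically, and the moreover clause reduces to Lemma~\ref{le:inverse} after observing that if $p$ does not divide $m+1$ then $x=(m+1)/p$ is a ratio of nonzero coprime integers lying in $R$. Your explicit caveat that $p$ must be a non-zero-divisor (so that writing $x=(m+1)/p$ is legitimate) is a point the paper's proof uses silently when it writes $\frac{z}{p}=x \in R$; since the lemma is only applied to integral domains in this paper, this does not affect either argument, and your torsion counterexample simply shows the hypothesis ``characteristic $0$'' alone would not suffice for the statement as literally written.
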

\begin{proof}
Let $A =\{px+1~|~x \in R\}\subset R$. Then $0 \not \in A$.  Indeed, if $0 \in A$ then $\frac{1}{p} \in R$, and we have a contradiction. Suppose now that for some $x \in R$ we have that $px+1 \in \Z$. We claim that $x \in \Z$. 
Observe that  if $px+1 \in \Z$ then $px=z \in \Z$ and $\frac{z}{p}=x \in R$.  If $x \not \in \Z$, then $(p,z)=1$, and $p$ has an inverse in $R$ by Lemma \ref{le:inverse}, in contradiction of our assumptions.  Finally, clearly $p\Z+1 \in A_p$.
\end{proof}

\begin{theorem}(Similar to Theorem 5.1 of \cite{Sh90})
\label{powersofT}
 If $Z$ is an integral domain of characteristic 0 and $x$ is transcendental
over $Z$, then $\Z$ is single-fold Diophantine over $R=Z[x]$.
\end{theorem}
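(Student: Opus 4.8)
The plan is to mirror the proof of Theorem~\ref{thm:singlefoldZ}, replacing the distinguished element $a \in O_{K,\calS}$ by the transcendental $x$ itself and the constant field by the constant subring $R$ of $R[x]$. First I would apply Lemma~\ref{Pell1} with $s = x$: since $x \notin R$ and $R[x]$ is a domain, the solutions of
\[
u^2 - (x^2-1)w^2 = 1
\]
in $R[x]$ are exactly the pairs $\pm(f_n,g_n)$, $n \in \Z$. Passing to $R[x][\sqrt{x^2-1}]$ and setting $\varepsilon = x - \sqrt{x^2-1}$, one has $f_n - \sqrt{x^2-1}\,g_n = \varepsilon^n$ and, exactly as in Theorem~\ref{thm:singlefoldZ}, the congruence $\tfrac{\varepsilon^n-1}{\varepsilon-1} \equiv n \pmod{\varepsilon-1}$. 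Concretely this records that $f_n \equiv 1$ and $g_n \equiv n \pmod{x-1}$ (i.e. $f_n(1)=1$ and $g_n(1)=n$), together with $g_{-n} = -g_n$.

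Second, I would propose the definition: $t \in \Z$ iff there exist $u,w \in R[x]$ with
\[
u^2-(x^2-1)w^2 = 1, \qquad (x-1)\mid(u-1), \qquad (x-1)\mid(w-t),
\]
together with a clause asserting that $t$ is a constant (lies in $R$). The two divisibility clauses force $u(1)=1$ and $w(1)=t(1)$; since every solution has $w = \pm g_n$ with $w(1) = \pm n \in \Z$, we get $t(1) = n \in \Z$ and $t \equiv n \pmod{x-1}$ in $R[x]$. As $x-1$ is a nonunit of positive degree, a nonzero constant cannot be divisible by it, so once $t$ is known to be constant the congruence $t \equiv n \pmod{x-1}$ upgrades to the equality $t = n$. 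Conversely, for $t = n \in \Z$ the pair $(u,w) = (f_n,g_n)$ (or $(f_{|n|},-g_{|n|})$ when $n<0$) satisfies everything, so the definition captures exactly $\Z$.

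The hard part will be the constancy clause: giving a single-fold Diophantine definition of the constant subring $R \subseteq R[x]$ valid for an \emph{arbitrary} integral domain $R$ of characteristic $0$. When $R$ is a field this is easy, since the constants are then $\{0\}\cup R^{\times}$, definable by $\exists d\,(c^2 d = c)$; but for a general $R$ (the case of real interest, e.g.\ $R=\Z$) the units of $R[x]$ are too few, and one must instead characterize constants as the elements of degree $\le 0$, that is, those integral at the infinite valuation of $R(x)$. This degree-zero (integrality-at-infinity) characterization is the genuine technical heart, and producing it with a \emph{unique} witness is essentially the content of \cite{Sh90}; I would reconstruct that argument and verify it yields a single-fold clause. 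Granting this, single-foldness of the whole definition follows: given $t = n$, the conditions $u(1)=1$ and $w(1)=n$ pin the Pell solution down to the single pair identified above, the quotients exhibiting $(x-1)\mid(u-1)$ and $(x-1)\mid(w-t)$ are unique because $R[x]$ is a domain, and the constancy witness is unique by construction. Finally I would double-check the degenerate indices $n = 0,\pm 1$ and the sign bookkeeping coming from $g_{-n}=-g_n$, which is exactly where the congruence $(x-1)\mid(u-1)$ is needed to discard the spurious solutions $(-f_n,\pm g_n)$.
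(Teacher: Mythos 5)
There is a genuine gap, and it sits exactly where you placed your deferral. Your definition needs a clause asserting that $t$ lies in the constant subring $R$, and you propose to obtain it by ``reconstructing'' an integrality-at-infinity argument from \cite{Sh90}. But no such argument exists there: the theorem you are proving \emph{is} essentially Theorem 5.1 of \cite{Sh90}, and its proof (reproduced in the paper) never defines constancy at all. Worse, in the motivating case $R=\Z$ the constants of $R[x]$ are precisely $\Z$, so ``define the constant subring, then cut out $\Z$ inside it'' is circular: the constancy clause is the entire theorem. For general integral domains $R$ of characteristic $0$ no Diophantine definition of $R$ inside $R[x]$ is available (the valuation-ring definitions in Section \ref{section:order} require strong hypotheses on the constant field, such as embeddability into $\R$ or a $p$-adic field, and are not single-fold), so your reduction replaces the theorem by a statement that is at best unproven and at worst false.

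The paper's proof avoids the issue by a different trick, due to Denef \cite{Den5} and refined in \cite{Sh90}: the element $a$ being tested is placed \emph{inside} the Pell coefficient, i.e.\ one works with $f^2-((ax)^2-1)g^2=1$ rather than with the fixed equation $u^2-(x^2-1)w^2=1$. Auxiliary solutions $(f_2,g_2)$, $(f_3,g_3)$ and the conditions $g_3\mid g$, $\ ax\mid f$, $\ t\mid g_2g_3$, $\ t\equiv g \bmod g_3^2$, $\ a=t/g_3$ then force, via the congruence $g_{3r}\equiv rg_3 \bmod g_3^2$ and the degree bound $\deg{t}<2\deg{g_3}$ coming from $t\mid g_2g_3$, the exact identity $t=\pm rg_3$, hence $a=\pm r$. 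Thus integrality \emph{and} constancy of $a$ fall out simultaneously from the divisibility-and-degree argument, with no constant-defining clause anywhere; the only side condition is $a\neq 0$, which is handled by Lemma \ref{le:non-zero} (or by requiring inverses when $\Q\subseteq R$). Your observations about the fixed Pell equation ($f_n(1)=1$, $g_n(1)=n$, uniqueness of the witnessing pair) are correct as far as they go, and they do show your definition would work \emph{if} a single-fold constancy clause were supplied; but since supplying it is the whole difficulty, the proposal as it stands does not prove the theorem.
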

\begin{proof}
As we explained above, without loss of generality, we can assume that $\Q \not \subset R$, and therefore, by Lemma \ref{le:inverse}, $R$ contains at least one non-inverted prime.
 Consider the following set of equations,
\begin{equation}
\label{eq:0}
(f_i-\sqrt{(a^2x^2-1)}g_i)=(ax-\sqrt{(a^2x^2-1)})^i, i=2, 3
\end{equation}
\begin{equation}
\label{eq:1}
f^2  -(a^2 x^ 2 -1)g ^2 =1,
\end{equation}
\begin{equation}
\label{eq:2}
f-\sqrt{a^2x^2-1}g-1=(f_3\sqrt{a^2x^2-1}g_3-1)(z_1-\sqrt{a^2x^2-1}z_2)
\end{equation}
\begin{equation}
\label{eq:3}
t | g_3g_2, 
\end{equation}
\begin{equation}
\label{eq:4}
t \equiv g \mod g_3^2,
\end{equation}
\begin{equation}
\label{eq:5}
ax | f,
\end{equation}
\begin{equation}
\label{eq:6}
a = t/g_3,
\end{equation}

We show that these equations can be satisfied with some values of variables 
\[
a \not = 0, f, g, f_2, g_2, f_3, g_3, t, z_1, z_2  \in Z[x]
\]
 only if we choose $a$ to be an odd integer. First of all, we note that for any choice of $a\in Z[x]\setminus{0}$, we have that $a^2x^2-1 \not \in \Z$.  Indeed, if $a \in Z[x]$ and $a \ne 0$, then $\deg a$ as a polynomial in $x$ is bigger or equal to 0.  Therefore, the degree  of $a^2x^2-1$ is bigger or equal to 2.  By Lemma \ref{Pell1} we have from \eqref{eq:1} that $ f =\pm f_n,  g = \pm g_n$ for some $n \in \Z_{\geq 0}$.  Alternatively, $g=g_m, m \in \Z$.  
 Let $\varepsilon=ax-\sqrt{a^2x^2-1}$.  Then $f-\sqrt{a^2x^2-1}g=\pm \varepsilon^m$ for some $m \in \Z$.  (See Remark \ref{rem:signofepsilon} for the discussion of signs.)  Next  by Proposition \ref{prop:exist}, we deduce that $\varepsilon^3-1$ is not a unit.  So, from \eqref{eq:2}, as in the proof of Theorem \ref{thm:exp}, we conclude that $f-\sqrt{a^2x^2-1}g= \varepsilon^m$,
$m = 3r, r \in \Z_{\ne 0}$.  Further,  from \eqref{eq:5} we obtain that $f_1 | f_m$, implying that $m$ is odd. (From the binomial expansion, it  is easy to see that $f_1$ divides $f_m$ in the polynomial ring only if $m$ is odd.)  Hence,  $r$ is odd.
From Lemma \ref{Pell1} we also have that
\[
g_{3r}  = \pm  \sum_{|r|-i \mbox{ \tiny odd}}\binom{|r|}{i}
f_3^i((ax)^2 - 1)^{(|r|-i- 1)/2}g_3^{|r|-i},
\]
where ``$-$'' corresponds to $r<0$. Thus $g_{3r} \equiv rf_3^{|r|-1}g_3 \bmod g_3^2$.
Additionally, we have that   $f_3^2 \equiv 1 \bmod g_3^2$. Since $|r| - 1$ is even, we
now deduce $g_{3r} \equiv  rg_3 \bmod g_3^2$.  Thus, we conclude using \eqref{eq:4} that $ t \equiv rg_3 \bmod g_3^2$ or equivalently 
\begin{equation}
\label{eq:divide}
g_3^2 | (t - rg_3).
\end{equation}
From \eqref{eq:3} we have $t | g_3g_2$ so that $\deg{t} < 2 \deg{g_3}$, and $\deg{t -rg_3} <\deg{g_3^2}$. Therefore \eqref{eq:divide} implies that $t - rg_3=0$, $a= t/g_3=  r$, that is, $a$ is an odd
integer.

Conversely, suppose $r$ is an odd integer and let $a = r$. Then $t=ag_3=rg_3$.  To satisfy \eqref{eq:1} and \eqref{eq:2} we need to set $(f, g) = (f_{m}(ax), g_{m}(ax))$, where $m \ne 0, m \in \Z$.  Further, \eqref{eq:2}  requires that $m \equiv 0 \bmod 3$.  To satisfy \eqref{eq:4}, we need to arrange for $t\equiv g \bmod g_3^2$, or in other words, we need $ag_3 -g_m \equiv 0 \bmod g_3^2$ to be satisfied.  As before, \eqref{eq:5} implies $m$ is an odd number.  So we have to set $m=3r'$, where $r'$ is odd.  Thus, again as above we have that $g_m \equiv  r'g_3 \bmod g_3^2$.   Therefore, we have to choose $r' \equiv r \bmod g_3$.  But since both $r', r \in \Z$, and $g_3 \not \in Z$, the only way to satisfy the equivalence is to set $r'=r$.
Now \eqref{eq:0}--\eqref{eq:2}, \eqref{eq:4} and \eqref{eq:5} are satisfied.
Since $g_2(rx) = 2rx$, and we set $t = rg_3(rx)$, we can conclude that $t \Big{|} g_3(rx)g_2(rx)$, and \eqref{eq:3} and \eqref{eq:6} are satisfied.  Observe, that given an odd integer $a$, the remaining variables have to take the values described above.

We now show how to state the assumption that $a\ne 0$.   Let $p$ be a rational prime without a multiplicative inverse in $R$.   We replace the condition $a \ne 0$ by $a=2ps+1, s \in R$.  By Lemma \ref{le:non-zero}, the added equation will imply that $a \ne 0$.

Now, if Equations \eqref{eq:0}--\eqref{eq:6} together with the new equation $a=2ps+1$ are satisfied, by Lemma  \ref{le:non-zero}, we conclude that $a=2ps+1$  is an odd integer, i. e. $a=2u+1$ for some $u \in \Z$.  Therefore  $2ps+1 =2u+1$ or $sp =u \in \Z$.  Since $ps \in \Z$, the only prime that can divide the denominator of $s$ is $p$.  But by Lemma \ref{le:inverse}, we have that $p$ cannot appear in a reduced denominator of an element in $R$.  Therefore, we conclude that $s \in \Z$.  Hence, Equations \eqref{eq:0}--\eqref{eq:6} together with the new equation $a=2ps+1$ are satisfiable over $R$ if and only if $s \in \Z$.

\end{proof}
\begin{notation}
\label{not:inZ}
We denote Equations \eqref{eq:0} -- \eqref{eq:6}, together with the equation $a=2ps+1$ by $F(a,x, f, g, f_2, g_2, f_3, g_3, t, p, s)$.  Thus, 
\[
\Z=\{s \in Z[x]|F(a,x, f, g, f_2, g_2, f_3, g_3, t, p, s)\}.
\]
In this formula $p$ is a fixed parameter corresponding to a prime not inverted in $R$.
\end{notation}
In this section, as in the section concerning rings of $\calS$-integers, we will need to know that for any $a \in R, b \in \Z_{\ne 0}$ we have that $\varepsilon^n -b=(ax-\sqrt{a^2x^2-1})^n-b$ is not a unit of $R[\sqrt{a^2x^2-1}]$.   There is a similar statement in Proposition \ref{prop:exist}.  The difference between that statement and the statement below is that over $O_{K,\calS}$ we fixed $a$, while here $a$ ranges over $R$.
\begin{lemma}
\label{le:nonconstant}
Let $a \in Z[x],  b, n \in \Z, abn \ne 0$.  Then we have that $\varepsilon^n-b$ is not a unit in $R[x, \sqrt{a^2x^2-1}]$. 
\end{lemma}
\begin{proof}
 Let $K$ be the fraction field of $R$.    Let $f_n(ax)-\sqrt{a^2x^2-1}g_n(ax)=\varepsilon^n(ax)=\varepsilon^n$.  The monic irreducible polynomial of $\varepsilon^n$ over $Z[x]$ is of the form $X^n-2f_nX+1$.  Therefore, for any $b \in \Z$, we have that ${\mathbf N}_{K(\sqrt{a^2x^2-1})/K}(b-\varepsilon^n)= b^n-2f_nb+1$.  Here we note that the only units of $Z[x]$ are some elements of $Z$.  So, if $b^n-2f_nb+1 \not \in Z$, then we conclude that $b^n-2f_nb+1$ is not a unit.   Since $f_n \not \in Z, b \ne 0$, we conclude that $b^n-2f_nb+1$ is not a unit in $Z[x]$, and, therefore, $\varepsilon - b$ is not a unit in 
 $Z[x,\sqrt{a^2x^2-1}]$. 

\end{proof}
Now that we have a single fold Diophantine definition of integers, we can produce a single-fold definition of non-zero integers, and then positive integers and exponentiation.
\begin{lemma}
If $Z$ is an integral domain of characteristic 0 and $x$ is transcendental
over $Z$, then $\Z_{\ne 0}$ is single-fold Diophantine over $R=Z[x]$.
\end{lemma}
\begin{proof}
Let $s \in R$ be given and consider the following sequence of equations:
\begin{equation}
\label{eq:nonzero1}
F(a,x, f, g, f_2, g_2, f_3, g_3, t, p, s),
\end{equation}
\begin{equation}
\label{eq:nonzero2}
u^2-(x^2-1)w^2=1,
\end{equation}
\begin{equation}
\label{eq:nonzero3}
s - \frac{u-w\sqrt{x^2-1}-1}{x-\sqrt{x^2-1} -1}=(z_1-z_2\sqrt{x^2-1})(x-\sqrt{x^2-1}).
\end{equation}
From \eqref{eq:nonzero1} via Notation \ref{not:inZ}, we conclude that $s \in \Z$.  If we set $\varepsilon=\varepsilon(x)=(x-\sqrt{x^2-1})$, then by Lemma \ref{Pell1} and Remark \ref{rem:signofepsilon} we deduce from \eqref{eq:nonzero3} that  that $u-\sqrt{x^2-1}v=\pm \varepsilon^m, m \in \Z$.    By Lemma \ref{le:nonconstant} we also know that $\varepsilon-1$ is not a unit of $R$.  So, as in the proof of Theorem \ref{thm:exp} again, $u-\sqrt{x^2-1}v= \varepsilon^m$, and $m \ne 0$.  Finally, we have that $s \equiv m \bmod (\varepsilon-1)$ in $R$, implying $s=m \ne 0$.

To see that this definition is single-fold, let $s \in R$.  Then  by Theorem \ref{powersofT}, there is a uniques set of values that can be taken by variables  $a,x, f, g, f_2, g_2, f_3, g_3, t \in R$.  Finally, given $s \in \Z$, Equation \eqref{eq:nonzero2} forces $u-w\sqrt{x^2-1}=(x-\sqrt{x^2-1})^s$.  Thus, $u,w$ are determined uniquely.
\end{proof}
\bigskip
The next theorem will show existence of  single-fold  definitions for all c.e. subsets of rational integers in polynomial rings. The proofs will proceed via Diophantine definitions of  exponentiation with arguments similar to the ones used in the proof  of Theorem \ref{thm:exp}.
\begin{notation}
Let $\hat U(s, \ldots)$ denote Equations \eqref{eq:nonzero1}-\eqref{eq:nonzero3}.  So that over $R$
\[
\Z_{\ne 0}=\{s|\exists ....\hat U(s,\ldots)\},
\]
 and this definition is single-fold.
\end{notation}

\begin{theorem}
\label{thm:withoutQandwithQ}
Let $Z$ be an integral domain of characteristic 0, and assume $x$ is transcendental over the fraction field of $Z$ and $\Q \not \subset R$.  Then every c.e. set of rational integers has a single-fold Diophantine definition over $R=Z[x]$.
\end{theorem}
\begin{proof}
We can now proceed as in the case of the rings of integral functions.  
Consider the following system of equations:
\begin{equation}
\label{eq:in Z1}
\hat U(c,\ldots).
\end{equation}
\begin{equation}
\label{eq2:in Z1}
\hat U(b,\ldots),
\end{equation}
\begin{equation}
\label{eq3:in Z1}
\hat U(d,\ldots),
\end{equation}
\begin{equation}
\label{eq:equiv11}
\exists n \in \Z, x, y\in R: [\sqrt{a^2-1}]: \pm \varepsilon^n - c=(\varepsilon - b)x \land d -\frac{\pm \varepsilon^n-1}{\varepsilon-1}=y(\varepsilon-1).
\end{equation}
By the same argument, as in the case of integral functions, this system gives a single-fold definition of the set 
\[
\hat G=\{(b,c,d) \in \Z_{\ne 0}^3, d>0, c=b^d\}.
\]
Finally, if we set $b=2$, we get the set $\hat G=\{(2,c,d)\in \Z^3_{>0}|c=2^d\}$.  Using Matiyasevich's result, we now conclude that the assertion of the theorem holds.
\end{proof}
Finally, putting the result above together with our observation about the case when $\Q \subset R$, we obtain the following result.
\begin{theorem}
\label{thm:together}
Let $Z$ be an integral domain of characteristic 0, and assume $x$ is transcendental over the fraction field of $Z$.  Then every c.e. set of rational integers has a single-fold Diophantine definition over $R=Z[x]$.
\end{theorem}

\section{Finite-fold Diophantine Definition of C.E. Sets of Polynomial Rings over Totally Real Fields of Constants}
\label{polyce}
So far we have produced single-fold definitions of certain c.e.\ subsets of a ring.  We now construct our first examples of rings where all c.e.\ sets have finite-fold definitions.  To do this we combine the arguments above with the proof of Zahidi from \cite{Zahidi2000} showing that over a polynomial ring with coefficients in a ring of integers of a totally real number field, all c.e.\ sets were Diophantine.  Zahidi's result was in turn an extension of a result of Denef from \cite{Den0} in which the coefficients of the polynomial ring came from $\Z$.  

Any discussion of c.e.\ sets of a polynomial ring and a ring of integral functions to be discussed later, must of course involve some discussion of indexing of the ring.  In other words we will need a bijection from a ring into the positive integers such that given a ``usual'' presentation of a polynomial (or an integral function in the future) we can effectively compute the image of this polynomial (or this integral function), and conversely, given a positive integer, we can determine what polynomial (or integral function) was mapped to it.  For a discussion of an effective indexing map in the case of a rational function field we refer the reader to the paper of Zahidi.  A discussion of indexing for function fields can be found in \cite{Sh34}.    In this paper we will assume that such an indexing is given and, following Zahidi, will denote it by $\theta$ going from positive integers to polynomials.  Below we describe the rest of our notation and assumptions.
\begin{notationassumptions}
\hfill
\begin{itemize}
\item Let $k$ be a totally real number field.
\item Let $O_k$ be the ring of integers of $k$.
\item Let $\alpha_1,\ldots, \alpha_r$ be an integral basis of $O_k$ over $\Z$.
\item Let $\theta: \Z_{>0} \longrightarrow O_k[T]$ be the effective bijection discussed above. 
\item Define  $P_n(X):=\theta(n)$.
\item Let $(U_n(X),W_n(X)) \in O_k[X]$ be such that  
\[
 U_n(X) - (X^2 - 1)^{1/2}W_n(X) = (X- (X^2 - 1)^{1/2})^n.
 \]
\end{itemize}
\end{notationassumptions}
As we indicated before, our intention is to follow the plan laid out by Zahidi and Denef, just making sure that all the definitions in that plan are finite-fold.  This plan entails showing (a) that all c.e.\ subsets of $\Z$ are (finite-fold) Diophantine over the polynomial ring in question and (b) that the indexing is (finite-fold) Diophantine or, in other words,  the set 
\[
\{(n, P_n(X))| n \in \Z_{>0}\}
\]
 is (finite-fold) Diophantine over $O_k[X]$.  Zahidi provides a brief argument in his paper that we apply to our situation, given that we have a finite-fold way of combining equations, to see that (a) and (b) imply the following theorem.
\begin{theorem}
\label{Zahidi-finitefold}
Let $Z$ be the ring of integers in a totally real number field $k$. Let $\theta$ be an effective indexing of $Z[X]$; then every $\theta$-computably enumerable relation over $Z[X]$ is a {\it finite-fold} Diophantine relation over $Z[X]$.
\end{theorem}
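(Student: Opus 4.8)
The plan is to take the construction of Zahidi in \cite{Zahidi2000} (which itself extends Denef's \cite{Den0}) essentially verbatim and to check at every stage that the Diophantine conditions produced are finite-fold. As the discussion above records, it is enough to prove two statements: (a) every c.e.\ subset of $\Z$ is finite-fold Diophantine over $R[T]$, where $R=O_k$; and (b) the graph of the indexing, $\{(n,P_n)\mid n\in\Z_{>0}\}$, is finite-fold Diophantine over $R[T]$. Zahidi's reduction builds an arbitrary $\theta$-c.e.\ relation out of these two ingredients using only conjunction and existential quantification over integer parameters, and our finite-fold ``And'' (the first Proposition of Section \ref{intsingle}, available since the fraction field $k(T)$ of $R[T]$ is not algebraically closed) turns a conjunction of finite-fold definitions into a single finite-fold definition; so once (a) and (b) are secured the theorem follows.

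Step (a) needs no new argument. Because $k$ is a number field, $\Z$ is integrally closed in $\Q$ and $1/p$ is not integral over $\Z$, so no rational prime is invertible in $R=O_k$; hence Corollary \ref{cor:singleZce} applies and delivers single-fold --- in particular finite-fold --- definitions of all c.e.\ subsets of $\Z$ over $R[T]$.

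The weight of the proof is therefore on step (b), which I would decompose into an integer layer and a polynomial-assembly layer. Expanding each coefficient of $P_n$ in the integral basis as $\sum_{j}b_{ij}\alpha_j$ with $b_{ij}\in\Z$, the correspondence between $n$ and the degree together with the integer matrix $(b_{ij})$ is computable, hence a c.e.\ relation among rational integers, and so by step (a) it is finite-fold Diophantine over $R[T]$. What remains is to assert Diophantine-ly, from the integer data $(b_{ij})$, that the element $P\in R[T]$ really equals $\sum_{i}\big(\sum_{j}b_{ij}\alpha_j\big)T^i$. Following Denef, I would recover the coefficients of $P$ from its values at integer arguments $b$, each evaluation $P(b)=c$ being captured by the divisibility $(T-b)\mid(P-c)$ in $R[T]$; this condition is single-fold in its witness, since the quotient $(P-c)/(T-b)$ is unique in the integral domain $R[T]$, and the passage from the integer value $c$ to the coefficient matrix is once more a c.e.\ integer relation covered by (a). The Pell sequences $X_n,Y_n$ supplied in the Notation and Assumptions furnish the Diophantine link between the integer index and the polynomial data that makes this recovery uniform in $n$.

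The main obstacle is the bookkeeping of the degree and the attendant danger of admitting infinitely many spurious witnesses. A Diophantine definition has a fixed number of existential variables, so the unbounded coefficient sequence of $P_n$ must be compressed into boundedly many integers by a Diophantine sequence-coding, with the degree of $P$ bounded Diophantine-ly in terms of $n$. I would verify that every quantifier introduced by this compression has uniquely determined witnesses: by Lemma \ref{Pell1}(3) the pairs $(X_n,Y_n)$ are the only solutions of $X^2-(T^2-1)Y^2=1$ in $R[T]$ and are fixed up to sign by their index, the divisibility quotients above are unique, and the integer-coding relations are finite-fold by (a); the finite-fold ``And'' then glues these into a single finite-fold definition. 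The genuinely delicate point is the interpolation/digit-extraction bridging $P$ and its integer code --- in Denef's treatment this rests on evaluating $Y_n$ at integer arguments to tie the polynomial Pell sequence to the integer Pell sequence --- and I would check that the congruences used there pin down their parameters uniquely, so that this bridge too is finite-fold. Granting this verification, (b) holds and, with (a) and the finite-fold ``And'', the theorem follows.
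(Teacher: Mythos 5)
Your overall skeleton does match the paper's: reduce the theorem to (a) finite-fold definability of c.e.\ sets of integers over $R[T]$ (which you correctly obtain from Corollary \ref{cor:singleZce}, since $O_k$ inverts no rational prime) and (b) finite-fold definability of the indexing set $\{(n,P_n)\}$, glued together by the finite-fold ``And''. But your step (b) has a genuine gap: you never explain how to impose, as a Diophantine condition \emph{on the unknown polynomial $F$ itself}, that $F$ has bounded degree and boundedly large values, and without such a condition your assembly scheme is not merely infinite-fold but incorrect. The evaluation relation $P(b)=c$, i.e.\ $(T-b)\mid(P-c)$, is satisfied by infinitely many $P\in R[T]$ for fixed integers $b,c$; the uniqueness of the quotient witness, which you stress, is beside the point. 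Since a Diophantine definition has a fixed number of variables while $\deg P_n$ is unbounded in $n$, you cannot impose $d+1$ interpolation conditions uniformly; you are forced into a single evaluation at a large point, and then you must constrain $F$ so that no \emph{other} polynomial taking the same value at that point qualifies. That constraint is exactly what the paper (following Denef--Zahidi) supplies with the positive-definiteness relation Pos, made Diophantine by Pourchet's five-squares theorem \cite{Pourchet71}: the condition $g^2(Y_{d+2}^2+c-F^2-1)=F_1^2+\cdots+F_5^2$ simultaneously caps $\deg F$ at $d+1$ and yields $|\sigma(F(x))|<b+c$ for all integers $0\le x\le d$ and every real embedding $\sigma$ (this is where total realness of $k$ enters), so that agreement of $F$ and $P_n$ at the single point $2b+2c+d$ forces $F=P_n$. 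Your proposal contains no substitute for this mechanism --- neither Pos, nor Pourchet, nor any other Diophantine bound on degree or archimedean size --- and the ``sequence-coding'' and Pell-sequence remarks do not fill the hole, since they leave unexplained precisely how ``large enough relative to the unknown $F$'' is expressed existentially.

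There is a second omission worth naming: even granting Zahidi's conditions, the new work the theorem actually requires is verifying that the five-squares representation is itself \emph{finite-fold}. The paper does this by noting that for fixed $g$ and $F$ the degrees of $F_1,\dots,F_5$ are bounded by $\deg F$, and their values at integer arguments are algebraic integers whose conjugates are bounded at all embeddings, hence of bounded height, so only finitely many coefficient vectors can occur. This audit of the one step in Zahidi's proof that threatens infinite-foldness is absent from your proposal, and it is the heart of the finite-fold claim.
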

Thus we concentrate on proving the following proposition.
\begin{proposition}
\label{prop:indexing}
 The set 
\[
\{(n, P_n(X))| n \in \Z_{>0}\}
\]
 is finite-fold Diophantine over $O_k[X]$.
\end{proposition}
The lemmas below constitute a proof of the proposition.    Like the earlier authors, we will make use of a theorem of Y. Pourchet representing positive-definite polynomials as sums of five squares. We start with an auxiliary lemma.
\begin{lemma}
\label{le:fix}
Fix a positive integer $n$ and $n+1$ algebraic integers $\{a_0, a_1, \ldots , a_n\} \subset O_k$.  Then there is exacly one polynomial $G(t) \in O_k[t]$ of degree $n$ such that $G(i)=a_i, i=0,\ldots, n$.   
\end{lemma}
\begin{proof}
Let $G(t)=b_0+b_1t +b_nT^n$ and observe that our requirement on the values of $G(t)$ implies that the coefficients of $G$ must be the solutions of a linear system $A\bar b=\bar a$, where $A=(c_{i,j}), c_{i,j}=i^j, i,j=0,\ldots, n, (\bar b)^t=(b_0,\ldots, b_n), (\bar a)^t=(a_0, \ldots, a_{n})$.  (Here ``$t$'' denotes transposition.)  Note that $\det(A)$ is a van der Monde determinant, and therfore not equal to 0.  Thus, the system has a unique solution.
\end{proof}
\begin{corollary}
\label{cor:h}
Let $h$ be a fixed positive integer, let $F(t) \in O_k[t]$, let $\Omega$ be the set of all embeddings $\sigma$ of $k$ into its algebraic closure,  let 
\[
B_h=\{(G(t) \in O_k[t]) \land  (\deg{G}\leq \deg{F}) \land (\forall \sigma \in \Omega, \forall  i=0,\ldots, \deg F-1:|\sigma(G(i))|\leq h)\}. 
\]
 Then $B_h$ is finite.
\end{corollary}
\begin{proof}
Let $V$ be the set of elements $v$ of $O_k$ such that for any $\sigma \in \Omega$ we have that  $|\sigma(v)| < h$.  Let $m=[k:\Q]$, and let $v \in V$.  Then any coefficient of the monic irreducible polynomial of $v$ over $\Q$ must be an integer of absolute value less or equal to $\max(mh, mh^m)$.    Thus $V$ is a finite set.  

Now let $G(t) \in B_h$.  Then $\deg{G} \leq \deg{F}=n$.  Next we note that  $G(i) \in V$ for $i=0,\ldots,n-1$.  Thus, $(G(0),\ldots, G(n-1)) \in V^n$.  So that the set of possible $n$-tuples of values $\{(G(0),\ldots, G(n-1)) \}$ is finite. By  Lemma \ref{le:fix}, for each $n$-tuple $(a_0,\ldots,a_n)$, there exists  only one polynomial of degree less or equal to $n$ such that $G(i)=a_i$.  Thus we now conclude that the number of polynomials $G$ in $B_h$ is finite. 
\end{proof}
\begin{definition}
\begin{itemize}
\item If $F$ is a polynomial in $O_k[t]$, then  $F$ is {\it positive-definite} on
$k$ (denoted by $\mbox{Pos}(F)$) if and only if $\sigma(F(t)) \geq 0$ for all $t \in k$ and for all real embeddings
$\sigma$ of $k$ into its algebraic closure. 
\item If $F$ is a polynomial in $O_k[t]$, then  $F$ is {\it strictly positive-definite} on
$k$  if and only if $\sigma(F(t)) > 0$ for all $t \in k$ and for all real embeddings
$\sigma$ of $k$ into its algebraic closure. 
\item Let Pos$_2(g,F) \subset \Z \times O_k[t]$, contain pairs $(g,F)$ such that $g^2F=F_1^2+\ldots +F_5$ for some $F_1, \ldots , F_5 \in O_k[t]$.
\end{itemize}
\end{definition}
\begin{lemma}
For any $F \in O_k[t]$ we have that Pos$(F)$ if and only if there exists $g \in \Z$ such that Pos$_2(g,F)$.
\end{lemma}
\begin{proof}
Suppose there exists $g \in \Z$ such that $g^2F=F_1^2+\ldots +F_5^2$ for some $F_1, \ldots , F_5 \in O_k[t]$.  Then, clearly Pos$(F)$ is true.  Conversely, suppose Pos$(F)$ is true.  Then by a theorem of Pourchet (see \cite{Pourchet71}), we have that $F=G_1^2+\ldots +G_5^2$, for some $G_1, \ldots , G_5 \in k[t]$.  Let $g\in \Z$ be such that for any coefficient $a$ of $G_1, \ldots, G_5$, we have that $ga \in O_k$.  Then Pos$_2(g, F)$.
\end{proof}
\begin{lemma}
 The relation Pos$_2$ is finite-fold Diophantine over $O_k[t]$.
\end{lemma}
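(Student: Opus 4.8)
The plan is to reduce positive-definiteness to a sum-of-five-squares identity, following Denef and Zahidi, and then to add enough control to make the representation finite-fold. Concretely, I would prove the equivalence
\[
\mbox{Pos}(F)\ \Longleftrightarrow\ \exists\, b\in\Z\setminus\{0\}\ \exists\, P_1,\dots,P_5\in O_k[T]:\ b^2F=P_1^2+\cdots+P_5^2 .
\]
The implication from right to left is immediate and does not even use Pourchet: since $k$ is totally real, every embedding $\sigma$ of $k$ is real, and applying $\sigma$ coefficientwise turns the identity into $\sigma(b)^2\,\sigma(F)=\sum_i\sigma(P_i)^2$, an identity of real polynomials whose right-hand side is nonnegative at every real argument; as $\sigma(b)^2>0$ and $\sigma(F)$ is continuous, $\sigma(F)(t)\ge 0$ for all $t\in\R$, in particular for all $t\in k$, which is exactly $\mbox{Pos}(F)$. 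For the converse I would invoke Pourchet's theorem to represent a positive-definite $F$ as a sum of five squares over $k$ and then clear denominators to reach an identity of the displayed shape with $P_i\in O_k[T]$ and $b\in\Z_{>0}$. Justifying that the summands may be taken to be polynomials, so that the denominator is a \emph{constant} $b$ rather than a genuine polynomial, is the first technical point; it rests on the finiteness of the Pythagoras number of $k[T]$ for a number field $k$, together with the classical fact that in a number field every totally positive integer is a sum of squares.

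The heart of the matter is to verify that this definition is finite-fold, i.e.\ that for a fixed $F$ with $\mbox{Pos}(F)$ only finitely many tuples $(b,P_1,\dots,P_5)$ satisfy it. Two of the three required bounds come essentially for free from anisotropy. First, the degrees are bounded automatically: over the totally real field $k$ the form $x_1^2+\cdots+x_5^2$ is anisotropic, since it is positive definite at each real place, so if $m=\max_i\deg{P_i}$ then the degree-$2m$ coefficient of $\sum_iP_i^2$ is the sum of the squares of the leading coefficients of the $P_i$ of degree $m$, hence nonzero; thus $\deg{\sum_iP_i^2}=2m$, and comparison with $b^2F$ forces $2m=\deg{F}$ and $\deg{P_i}\le\deg{F}/2$. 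Second, for a \emph{fixed} constant $b$ the coefficients have bounded height: evaluating $\sigma(b)^2\sigma(F)=\sum_i\sigma(P_i)^2$ at $\deg{F}+1$ real points shows that each value $\sigma(P_i)(t)$ lies on a sphere of radius $|\sigma(b)|\sqrt{\sigma(F)(t)}$ and so is bounded, and Lagrange interpolation through fixed nodes bounds every conjugate of every coefficient of every $P_i$; as these coefficients are algebraic integers, Northcott's theorem leaves only finitely many possibilities.

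The remaining, and I expect hardest, step is to control the constant $b$ itself, for without such control the definition is badly infinite-fold: the rescaling $(b,P_i)\mapsto(cb,cP_i)$ produces infinitely many solutions for every nonzero integer $c$. I would eliminate this freedom by imposing a normalization that forces $b$ to range over finitely many values for each $F$ — for instance a primitivity requirement on the tuple $(b,P_1,\dots,P_5)$ — combined with an effective bound on the admissible denominator extracted from the proof of Pourchet's theorem. The delicate part here is that any auxiliary condition I add must itself be \emph{finite-fold} Diophantine, so that its witnesses do not reintroduce infinitely many solutions; this, rather than any formal manipulation, is where the real work lies. Once $b$ is confined to finitely many values, finiteness of the whole solution set follows from the two bounds above, and it remains only to assemble the pieces: the clause ``$b\in\Z,\ b\neq0$'' is single-fold Diophantine by Theorem~\ref{thm:singlefoldZ} (and Lemma~\ref{le:containslocal} for the auxiliary constants), and the conjunction of this clause with the polynomial identity and the normalization is turned into a single finite-fold equation by the finite-fold combination of equations from the beginning of Section~\ref{intsingle}. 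This yields a finite-fold Diophantine definition of $\mbox{Pos}$ over $O_k[T]$.
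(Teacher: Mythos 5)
Your completed steps coincide with the paper's own proof. The paper establishes exactly your equivalence --- $\mbox{Pos}(F)$ holds iff $g^2F=F_1^2+\cdots+F_5^2$ for some $g\in\Z\setminus\{0\}$ and $F_1,\dots,F_5\in O_k[t]$, with the easy direction immediate and the hard direction by Pourchet's theorem plus clearing denominators by a rational integer --- and then proves finiteness of the solution set \emph{for a given $g$ and $F$}: the degrees of the $F_i$ are bounded by $\deg{F}$, and evaluating at integer points bounds all archimedean conjugates of the values $F_i(a)$, leaving finitely many possible coefficients. Your anisotropy argument for the degree bound and your interpolation-plus-Northcott argument for the coefficients are just more explicit versions of this; also, no separate input about Pythagoras numbers or sums of squares of totally positive integers is needed beyond Pourchet's theorem itself, which already produces polynomial (not rational-function) summands in $k[t]$.

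Where you part ways with the paper is the step you call the hardest: taming the rescaling $(b,P_i)\mapsto(cb,cP_i)$. The paper never performs this step, because it never needs the bare relation $\mbox{Pos}$ to be literally finite-fold as you set it up. In the only place the lemma is used --- the relation Par and the lemma following it --- $g$ is not a free polynomial-side witness but an additional \emph{integer} argument of Par, pinned down by the clause that $g$ is the smallest positive integer admitting such a representation; since Par is a c.e.\ relation on $\Z$, and c.e.\ subsets of $\Z$ have single-fold definitions over $O_k[T]$ (Corollary \ref{cor:singleZce}), the choice of $g$ contributes no extra folds, and the fixed-$g$ finiteness statement is exactly what is needed. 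Moreover, your proposed repair via primitivity of the tuple demonstrably fails: already for $F=1$ the degree bound forces the $P_i$ to be constants, and every primitive Pythagorean triple $x^2+y^2=b^2$ yields a primitive solution $(b,x,y,0,0,0)$, so a single $F$ admits infinitely many primitive tuples. So your instinct that the displayed existential formula is infinite-fold is correct, but the cure is structural --- route the denominator through an integer relation with a minimality clause, as the paper does --- rather than a normalization imposed inside the polynomial identity, and an effective bound extracted from Pourchet's proof is neither available in the paper nor required.
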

\begin{proof}
 By definition of Pos$_2$ we have that $\mbox{Pos}_2(g,F)$ if and only if there exist $F_1,\ldots, F_5 \in O_k[t]$ such that
\begin{equation}
\label{fpos}
g^2F=F_1^2+\ldots F_5^2.
\end{equation}
We now show that the for a given $g$ and $F$, there can be only finitely many solutions to \eqref{fpos}.  First of all, the degrees of $F_1,\ldots,F_5$ are bounded by the degree of $F$.  Secondly, observe that for $i=0,\ldots, \deg F-1, j=1,\ldots, 5$ we have that $|\sigma(F_j(i))|^2 \leq g^2|\sigma(F(i))|$ for  all embeddings $\sigma \in \Omega$. (See Corollary \ref{cor:h} for definition of $\Omega$.)  Let $h=\max\{g\sqrt{|\sigma(F(i))|}\}$, where $\max$ is taken over $i=0,\ldots, \deg{F}-1$ and all $\sigma \in \Omega$.  Then if \eqref{fpos} holds, we have that $F_1,\ldots, F_5 \in B_h$. By Corollary \ref{cor:h}, we have that $B_h$ is finite.
\end{proof}
\begin{definition}
The relation $\mbox{Par}(n, b, c, d, v_1, \dots, v_r)$ on the rational integers is defined
to be the conjunction of the following conditions:
\begin{enumerate}
\item $n \in \Z_{\geq 1}$, ($\theta(n)=P_n \in O_k[T]$);
\item $b, c, d, g \in \Z_{\geq 0}, v_1,\ldots,v_r \in \Z$;
\item $d = \deg{P_n}$;
\item \label{par:4} $c$ is the smallest possible non-negative integer such that that $W^2_{d+2} + c -P^2_n - 1$ is strictly positive;
\item  $g$ is the smallest possible positive integer such that Pos$_2(g, W^2_{d+2} + c -P^2_n - 1)$. 
\item \label{par:5} $\forall x \in \Z: \mbox{ if } 0 \leq x\leq d \mbox{ then } W_{d+2}(x) \leq b$;
\item $P_n(2b + 2c + d) = v_1\alpha_1 + \ldots + v_r\alpha_r$.
\end{enumerate}
\end{definition}
\begin{lemma}
\label{le:Par}
Par is a recursive relation on integers.
\end{lemma}
\begin{proof}
Given our assumption that $\theta(n)$ is effective, that is we can effectively determine the degree and coefficients of $P_n$, the first three conditions can be checked algorithmically over $\Z$.  
So, we may start with describing an algorithm  for computing $c$ and $g$. 

  We start by calculating $c$.  For every $\sigma: k \rightarrow \R$ we will determine the smallest non-negative integer $c_{\sigma}$ such that  $\sigma(W^2_{d+2}-P^2_n -1)+c_{\sigma}>0$ for all values of the variable.  Then we will set $c=\max{_{\sigma}}\{c_{\sigma}\}$.
  
  We  compute $c_{\mbox{id}}$ first.  The degree of $P_n$ is $d$, and the degree of $W_{d+2}$ is $d+1$ by Lemma \ref{Pell1}. (We can compute $W_{d+2}$ algorithmically, since $(U_{d+2}-\sqrt{X^2-1}W_{d+2})=(X-\sqrt{X^2-1})^{d+2}$).  Thus the polynomial $W^2_{d+2}-P^2_n -1$ is of degree $2(d+1)$ with a  leading coefficient equal to the square of an element of $k \subset \R$, and therefore has an absolute minimum.  By Corollary \ref{cor:negative min}, there is an algorithm to verify whether this minimum is positive.  If the answer is ``yes'', then we set $c_{\mbox{id}}=0$.  If the answer is ``no'', then we consider $W^2_{d+2}-P^2_n -1+1= W^2_{d+2}-P^2_n$ and check whether $W^2_{d+2}-P^2_n$ is strictly positive for all values of the variable.  If the answer is ``yes'', we set $c_{\mbox{id}}=1$.  If the answer is ``no'', we consider $W^2_{d+2}-P^2_n+1$, etc.  If $\mu<0$ is the minimum value of $W^2_{d+2}-P^2_n -1$, then the process will terminate in at most $[\mu]+1$ steps.  
  
  We now calculate  $c_{\sigma}$ for some $\sigma \ne \mbox{id}$.  We note that since the leading coefficient of $W^2_{d+2}-P^2_n -1$ is a square, the leading coefficient of $\sigma(W^2_{d+2}-P^2_n -1)$ is positive.  Thus, we can proceed as in the case of $\sigma=\mbox{id}$. 
 
We can now determine $g$.   By a result of Pourchet cited above, we can write the polynomial $Y^2_{d+2}-P^2_n+c-1=G_1^2 +\ldots+G_5^2$, where $G_i \in k[T]$.  By examining coefficients of these polynomials,  we can determine an integer $g_{\max}$ such that $g_{\max}G_i \in O_{k}[T]$.  The value $g_{\max}$ is an upper bound on the set of $g$'s we have to search to find $g_{\min}$.   Let 
\[
g^2_{\min}(W^2_{d+2}-P^2_n+ c-1) =F_1^2+\ldots + F_5^2, 
\]
where $F_i \in O_k[T]$.  Then $\deg{F_s} \leq \frac{1}{2}\deg {(W^2_{d+2}-P^2_n+ c-1)}$, and  for $i=0,\ldots,d+1$ and all embeddings  $\sigma$ of $k$ into $\R$, we have that 
\[
|\sigma(F_s(i))| \leq g^2_{\min}(W_{d+2}^2(i)-\sigma(P^2_n(i))+c-1) \leq g_{\max}^2(W_{d+2}^2(i)-\sigma(P_n^2(i))+c-1), 
\]
where $s=1,\ldots , 5$.  Hence, by Corollary \ref{cor:h}, there are only finitely many polynomials $F$ satisfying these inequalities, and we can determine them all.   Once we determine all the possible $F_1,\ldots, F_5$, starting with $g=1$ and continuing through $g_{\max}$, we can check if any quintuple of possible polynomials works with any particular $g$, and thus determine $g_{\min}$.

We now consider Condition \eqref{par:5}.  By checking all the values  of $W_{d+2}(x)$ for $x \in \Z, x \in [1,\ldots,d]$, we can determine the maximum value of the set.  Finally, to determine $v_1,\ldots, v_r$, we can start running through all linear combinations with integer coefficents of the basis vectors until we hit $P_n(2b+2c+d)$. 

\end{proof}
The final piece of proof comes from the lemma below, which is taken essentially verbatim from Zahidi's paper.
\begin{lemma}
\label{le:equal}
$F \in O_k[T] \land  F = P_n$ is equivalent to $\exists n, b, c, d, g, v_1, \ldots, v_r \in O_k[T]$ :
\begin{enumerate}
\item \label{it:par} $\mbox{Par}(n, b, c, d, g, v_1, \ldots,  v_r)$;
\item \label{it:pos} Pos$_2(g,(W^2_{d+2} + c -F^2 - 1))$; 
\item \label{it:val}  $F(2b + 2c + d) = v_1\alpha_1 + \ldots v_r\alpha_r$.
\end{enumerate}
\end{lemma}
\begin{proof}
Suppose $F = P_n$ for some natural number $n$. Then one can easily find
natural numbers $b, c,d, g$ and rational integers $v_1,\ldots, v_r$ such that the relation \eqref{it:par}
is satisfied.  \eqref{it:pos} can be satisfied because $\deg{P_n} < \deg {W_{d+2}}$.

Conversely suppose Conditions \eqref{it:par}-\eqref{it:val} are satisfied for some natural numbers $c, d, n, b$ and integers $v_1,\ldots,  v_r$.  In this case we have to prove that $F = P_n$. From
conditions \eqref{it:par} and \eqref{it:val} it follows that 
\[
(F- P_n)(2b + 2c+d) = 0.
\]
  Thus, if $F \not = P_n$, there is some $S \in O_k[T]\not =0$ such that 
\[
F -P_n = (T-2b+2c+d)S(T).
\]
Now by Condition \eqref{it:pos}, it is the case that $F$ has degree at most $d + 1$, while $P_n$ has degree $d$ (by
Condition \eqref{it:par}), and hence, $S$ has degree at most $d$. So for some integer $k$ with $0\leq k \leq d$, we
have $S(k) \not = 0$. Now for at least one real embedding $\sigma$ we have
\[
|\sigma((F - P_n)(k))| = |(2b + 2c+d-k)||\sigma(S(k))| \geq 2b + 2c,
\]
(since $k \leq d$ and the fact that given an algebraic integer $a$ in a totally real number
field, $a \not= 0$, there is at least one real embedding such that $|\sigma(a)| \geq 1$).
At the same time, again by Condition \eqref{it:pos} of the lemma and by Part  \eqref{par:5} of the definition of the relation Par,  for any real embedding $\sigma$ we have, for all integers $x$ with  $0 \leq x \leq d$ :
\[
|\sigma(F(x))| \leq |\sigma(F(x)^2 + 1)| \leq W^2_{d+2}(x) + c < b+c,
\]
and
\[
|\sigma(P_n(x))|\leq |\sigma(P^2_n(x) + 1) | \leq W^2_{d+2}(x) + c < b+c,
\]
 and hence
\[
|\sigma((F - P_n)(x))|< 2b+ 2c,
\]
leading to a contradiction.
\end{proof}
The last lemma completes the proof of Proposition \ref{prop:indexing} and Theorem \ref{Zahidi-finitefold}.\\

One reason for our emphasis on finite-fold Diophantine definitions is that they
allow us to determine the difficulty of deciding whether a polynomial has
infinitely many solutions in a given ring $S$, an infinite version
of Hilbert's Tenth Problem:
$$HTP^\infty(S)=\bigcup_n\{ f\in S[X_1,\ldots,X_n]~:~(\exists^\infty (x_1,\ldots,x_n)\in S^n)~f(x_1,\ldots,x_n)=0\}.$$
The following corollary is a good example of the connection between these topics:
Theorem \ref{Zahidi-finitefold} actually proves that for the rings involved,
$HTP^{\infty}$ has the greatest complexity possible.

\begin{corollary}
\label{cor-finitefold}
Let $R$ be the ring of integers in a totally real number field $k$. Assume $R[T]$ has an effective indexing $\theta$
with domain $\N$ for which $\theta^{-1}(0)$ is decidable.
Then, for some fixed $n\in\N$, the set of polynomials in $n$ variables over $R[T]$ with infinitely many solutions there,
$$ HTP_n^\infty(R[T])=\{ f\in R[T][X_1,\ldots,X_n]~:~(\exists^\infty (x_1,\ldots,x_n)\in (R[T])^n)~f(x_1,\ldots,x_n)=0\},$$
is $\Pi^0_2$-complete, and thus computably isomorphic to $\overline{\emptyset''}$,
the complement of the Turing jump of the Halting Problem.
\end{corollary}
\begin{proof}
The assumption that $\theta^{-1}(0)$ is decidable allows us to build an effective injective index,
so assume that $\theta$ itself is injective.
By Theorem \ref{Zahidi-finitefold}, there is a polynomial $g(E,Y,Z_1,\ldots,Z_m)$ over $R[T]$
such that, for all $(e,y)\in\N^2$, if $\phi_e(y)$ fails to halt, then $g(e,\theta(y),\vec{Z})$ has no solution in $R[T]$;
whereas if $\phi_e(y)$ does halt, then $g(e,\theta(y),\vec{Z})$ has at least one solution in $R[T]$, but only finitely many.
It follows that, for each fixed $e$, $g(e,Y,\vec{Z})$ has infinitely many solutions in $R$ if $\dom{\phi_e}$ is infinite,
but only finitely many if $\dom{\phi_e}$ is finite.  It is well known that the set $\textbf{Inf}$ of indices $e$ for which
$\phi_e$ has infinite domain is a $\Pi_2$-complete set (\cite{Soare87}, Theorem IV.3.2), computably isomorphic to $\overline{\emptyset''}$,
and we have just described a $1$-reduction from $\textbf{Inf}$ to $HTP_n^\infty(R[T])$, by $e\mapsto g(e,Y,\vec{Z})$.
On the other hand, $HTP_n^{\infty}(R[T])$ itself is $\Pi_2$, hence $1$-reducible to $\textbf{Inf}$,
so by Myhill's Theorem (see \cite[I.5.4]{Soare87}), the two are computably isomorphic.
\end{proof}
The computability-theoretic notation used here is standard; e.g.\ see \cite{Soare87}.
Computable isomorphism is the strongest equivalence in general use in computability,
so the corollary gives a very precise measurement of the complexity of $HTP_n^{\infty}(R[T])$.
The value of $n$ is simply one greater than the least number $m$ of variables required
for a polynomial $g$ giving a finite-fold Diophantine definition of the Halting Problem in $R[T]$.
Notice that we not only have proven the $\Pi_2$-completeness of $HTP^{\infty}(R[T])$,
the general question of whether a polynomial has infinitely many solutions in $R[T]$,
but in fact have established $\Pi_2$-completeness for its restriction $HTP_n^{\infty}(R[T])$
to polynomials with at most $n$ variables.

\section{Defining Valuation Rings over Function Fields of Characteristic 0 }
\label{section:order}
\setcounter{equation}{0}
In this section we give an existential definition of valuation rings for function fields of characteristic 0 over some classes of fields of constants including all number fields.  
More specifically, we will assume the constant field  $k$ to be a field algebraic over $\Q$ with an embedding into a finite extension $M$ of $\Q_p$ for some odd rational prime $p$ or into $\R$ (making $k$ formally real). Note that number fields satisfy these assumptions on $k$.

The method we use below is extendible to a much larger class of fields of characteristic 0 and to higher transcendence degree fields of positive characteristic.  We intend to describe these extensions in future papers.  \\

We now state the two main theorems of the section.
\begin{theorem}
\label{thm: p-order}
Let $k$ be a field algebraic over $\Q$, and such that $k$ has an embedding into a field $M$, a finite extension of $\Q_p$ for some odd prime $p$.  Let $K$ be a function field over $k$, and let $\uu$ be a prime of $K$.  Then the set $\{f \in K: \ord_{\uu} f \geq 0\}$ is Diophantine over $K$.
\end{theorem}

\begin{theorem}
\label{thm: r-order}
Let $k$ be a field algebraic over $\Q$, and such that $k$ has an embedding into $\R$.  Let $K$ be a function field over $k$, and let $\uu$ be a prime of $K$ such that its residue field is embeddable into $\R$.  (For example, $\uu$ can be a prime of odd degree.)  Then the set $\{f \in K: \ord_{\uu} f \geq 0\}$ is Diophantine over $K$.
\end{theorem}

In this section we 

We will need a sequence of lemmas and propositions below before completing the proof.
The first lemma describes a general property of Diophantine definitions.
\begin{lemma}
\label{le:d}
Let $F_2/F_1$ be a finite field extension.  Let $A \subset F_2$ and assume $A$ is Diophantine over $F_2$.  Then $A\cap F_1$ is Diophantine over $F_1$.
\end{lemma}
\begin{proof}
The proof follows from Lemma 2.1.17 of \cite{Sh34} and the fact that $F_2 \leq_{Dioph} F_1$.  (See Definition 2.1.5 of \cite{Sh34}).
\end{proof}
\begin{lemma}
\label{le:notmatter}
Let $\hat K$ be a finite extension of $K$, let $\hat \qq$ be a prime of $\hat K$ above a prime $\qq$ of $K$, and let $A$ be the set of all elements of $\hat K$ with non-negative order at $\hat \qq$.  Assume further that $A$ is Diophantine over $\hat K$.  Then $A \cap K$ is Diophantine over $K$ and consists of all elements of $K$ with non-negative order at $\qq$. 
\end{lemma}
\begin{proof}
Let $f \in K$.  Then $\ord_{\hat \qq}f=\frac{1}{e}\ord_{\qq}f$, where $e=e(\hat \qq/\qq)$.  Therefore, $\ord_{\hat \qq}f \geq 0$ if and only if $\ord_{\qq}f \geq 0$. 
\end{proof}
\begin{remark}
\label{le:notmatter2}
If $k$ has an embedding into a finite extension of $\Q_p$, the same is true of any finite extension of $k$.  
\end{remark}

In view of Lemma \ref{le:notmatter} and Remark \ref{le:notmatter2}, we can assume that if $k$ has an embedding into a finite extension of $\Q_p, p>2$, $k$ has no real embeddings.  (For example, we can adjoin $i$ to the original field.)
\begin{proposition}
\label{prop:aniso}
Let $k$ be any field of characteristic 0 such that the following form:
\begin{equation}
\label{eq:aniso}
X^2-aY^2 -bZ^2 +abW^2
\end{equation}
 is anisotropic over $k$ for some values $a, b \in k$.  If $K$ is a function field over $k$, $\Tt$ is a prime (or a valuation) of $K$ of degree 1 and  $h \in K$ is such that $\ord_{\Tt}h$ is odd, then 
 \begin{equation}
 \label{norm:1}
X^2-aY^2 -bZ^2 +abW^2=h
\end{equation}
has no solution  in $K$.
\end{proposition}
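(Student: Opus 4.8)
The plan is to argue by contradiction through a reduction argument at the place $\ord_{\Tt}$, exploiting two features of the hypotheses: the coefficients are constants, and the residue field at a degree-one prime is exactly $k$. First I would record that anisotropy of \eqref{eq:aniso} forces $a,b \in k \setminus \{0\}$, since if either vanished one of the four variables would become free and the form would be isotropic. Because $a,b$ lie in the constant field $k$, they are units at every place of $K$; in particular $\ord_{\Tt}a = \ord_{\Tt}b = 0$. Writing $v = \ord_{\Tt}$ and letting $O_{\Tt} = \{x \in K : v(x) \geq 0\}$ be the valuation ring with maximal ideal $\Tt$, the degree-one hypothesis says that the residue field $O_{\Tt}/\Tt$ is exactly $k$, so the reductions of $a$ and $b$ are $a$ and $b$ themselves. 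Consequently the reduction modulo $\Tt$ of the form in \eqref{eq:aniso}, evaluated at integral arguments, is literally the same anisotropic form over $k$.

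Then I would suppose for contradiction that $(X,Y,Z,W) \in K^4$ solves \eqref{norm:1}, set $n = \min\{v(X), v(Y), v(Z), v(W)\}$, fix a uniformizer $\pi$ at $\Tt$, and divide through by $\pi^{2n}$. Putting $(X',Y',Z',W') = (X,Y,Z,W)\pi^{-n}$, all four entries lie in $O_{\Tt}$ and at least one is a unit, and they satisfy
\[
(X')^2 - a(Y')^2 - b(Z')^2 + ab(W')^2 = h\pi^{-2n}.
\]
The left-hand side is integral at $\Tt$, so $v(h) - 2n = v(h\pi^{-2n}) \geq 0$; since $v(h)$ is odd and $2n$ is even, $v(h) - 2n$ is odd and hence strictly positive. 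Therefore $h\pi^{-2n} \in \Tt$, and reducing the displayed equation modulo $\Tt$ yields
\[
(\bar{X'})^2 - a(\bar{Y'})^2 - b(\bar{Z'})^2 + ab(\bar{W'})^2 = 0,
\]
where $\bar{X'}, \bar{Y'}, \bar{Z'}, \bar{W'} \in k$ are not all zero (the unit entry reduces to something nonzero). This is a nontrivial zero of the form over $k$, contradicting its anisotropy.

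The computations are routine; the two points needing care are precisely the facts isolated at the outset — that $a,b$ are $\Tt$-adic units and that the degree-one condition makes the residue field equal to $k$, so that the reduced form is the anisotropic form of the hypothesis rather than a base change of it. The parity of $\ord_{\Tt}h$ enters exactly to force $v(h) - 2n \neq 0$, which is what guarantees the reduced solution is a genuine nontrivial zero instead of a mere equation over $k$. I expect the only (minor) obstacle to be phrasing the reduction map cleanly; working directly in $K$ with the valuation ring $O_{\Tt}$ avoids any need to pass to the completion $K_{\Tt}$.
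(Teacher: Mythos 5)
Your proof is correct, and its engine is the same as the paper's: normalize the putative solution so that all four coordinates are integral at $\Tt$ with at least one of them a unit, observe that the parity hypothesis forces the right-hand side to have strictly positive order (an even number cannot equal an odd one), and reduce modulo $\Tt$ --- using that the degree-one hypothesis makes the residue field equal to $k$ and that $a,b$ are nonzero constants, hence units --- to produce a nontrivial zero of the anisotropic form over $k$. The one genuine difference is how the normalization is carried out. The paper first invokes a corollary of the Riemann--Roch theorem to arrange that $X,Y,Z,W,h$ have poles only at a single prime $\Qq \neq \Tt$, and then divides by a global function $g$ with poles only at $\Qq$ and $\ord_{\Tt}g = \min(\ord_{\Tt}X,\ord_{\Tt}Y,\ord_{\Tt}Z,\ord_{\Tt}W)$; you instead divide by $\pi^{n}$ for a local uniformizer $\pi$ at $\Tt$ and work entirely inside the valuation ring $O_{\Tt}$. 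Since the argument is purely local at $\Tt$, your version is leaner: it needs no Riemann--Roch and no control of poles away from $\Tt$, at the small cost of introducing the scaled variables abstractly rather than as honest functions with prescribed pole divisors. Two pedantic points you glide over, neither a gap: the solution cannot be identically zero because $\ord_{\Tt}h$ odd forces $h \neq 0$, so your $n$ is finite; and the identification of the reductions of $a,b$ with $a,b$ themselves uses that the constant field maps isomorphically onto the residue field of a degree-one prime, which is exactly the fact you isolated at the outset.
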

\begin{proof}
Assume the opposite and observe that  due to the fact that function field valuations are non-archimedean and $\ord_{\Tt}h$ is odd,
\[
2\min(\ord_{\Tt}X ,\ord_{\Tt}Y ,\ord_{\Tt}Z , \ord_{\Tt}W) < \ord_{\Tt}h.
\]
 Next let $U\in \{X, Y, Z, W\}$ be such that $\ord_{\Tt}U = \min\{\ord_{\Tt}X, \ord_{\Tt}Y, \ord_{\Tt}X,\ord_{\Tt}W\}$ and divide every variable in \eqref{norm:1} by $U^2$
 \begin{equation}
 \label{norm:2}
\left (\frac{X}{U} \right )^2-a\left (\frac{Y}{U}\right )^2 -b\left (\frac{Z}{U}\right )^2 +ab\left (\frac{W}{U}\right )^2=\frac{h}{U^2}.
\end{equation}
 Observe that $\displaystyle \frac{h}{U^2}$ has a zero at $\Tt$, while at least one of $\displaystyle \{\frac{X}{U}, \frac{Y}{U}, \frac{Z}{U}, \frac{W}{U}\}$ is equal to 1.   Thus considering \eqref{norm:2}  mod $\Tt$, taking into account that $\Tt$ is a degree one prime, we  conclude that the form \eqref{eq:aniso} is isotropic over $k$ in contradiction of our assumption.
 \end{proof}

\begin{notation}
 Since $k$ is embeddable into a finite extension $M$ of $\Q_p$,  we will sometimes identify $k$ with a  subfield of $M$.     Let $\pp$ lie above $p$ in $M$. We identify $\pp$ with the ideal of $M$ containing all elements of positive order at $\pp$.  Let $O_{\pp}$ be the set of elements of non-negative order at $\pp$.  Let $R_{\pp}$ be the residue field of $\pp$.  Since $M$ is complete under the valuation generated by $\pp$, we must have $\Q_p \subseteq k_{\pp} \subseteq  M$.  Thus, we can assume, without loss of generality, that $M=k_{\pp}$.  Let $H \subset k$ be a number field.  Then $O_{\pp}\cap H=O_{\pp_{H}}$ is a valuation ring of some prime $\pp_{H}$ of $H$.  
 
\end{notation}

\begin{lemma}
\label{le:exist}
There exist a number field $H \subseteq k$ containing algebraic integers $a, b$ such that $a$ is not a square in $M,$ $\ord_{\pp}a =0$, and $\ord_{\pp}b$ is odd.
\end{lemma}
\begin{proof}
First of all,  we  claim that there exists a number field $G \subset k$ such that the residue field $R_{\pp_G}\cong R_{\pp}$.  Indeed, since $[M:\Q_p]< \infty,$ we have that $R_{\pp}$ is a finite field, and for any $G$ we have that $[R_{\pp}:R_{\pp_{G}}]$ is a finite extension.  Let $\alpha$ generate $R_{\pp}$ over $R_{\pp_{G}}$.  Since $\alpha \in R_{\pp}$, there exists $x \in M$ such that the residue class of $x$ is $\alpha$.  If $x \not \in k$, since $M=k_{\pp}$, we have that there exists $y \in k$ with $x-y \in \pp$.  Hence the residue class of $y$ is also $\alpha$.  Therefore, in $H=G(y)$, the residue field of $\pp_{H}$ is isomorphic to $R_{\pp}$.  \\

 Now let $\gamma \in M$ and be such that $\ord_{\pp}(\gamma)=0$, and $\gamma$ is not a square modulo $\pp$. Such an element $\gamma$ exists in $M$, because not all residue classes of a finite field $R_{\pp}$ are squares of other residue classes.   Then the number field $H$ contains an element $a$ such that $a \equiv \gamma \bmod \pp$.  Further, any residue class of the prime $\pp_{H}$ contains algebraic integers.  Therefore, we can assume $a \in O_H$.  The existence of $b \in O_H$ such that $\ord_{\pp}b$ is odd can be proved in a similar fashion.

\end{proof}

\begin{lemma}
\label{le:norm}
If $a$ is not a square in $k$, then the form \eqref{eq:aniso} is isotropic over $k$ if and only if  there exists $y \in k(\sqrt{a})$ such that ${\mathbf N}_{k(\sqrt{a})/k}(y)=b$.
\end{lemma}
\begin{proof}
Suppose we have a non-trivial representation of 0 by the form 
\begin{equation}
 \label{norm:0}
X^2-aY^2 -bZ^2 +abW^2.
\end{equation}

Then without loss of generality, we can assume that $Z$ and $W$ are not simultaneously 0.  Otherwise, we are looking at the equation  
\begin{equation}
\label{eq:ns}
X^2 -aY^2=0,
\end{equation}
 while $a$ is not a square in $k$.  The only solution to \eqref{eq:ns} is  $X=Y=0$.  So we get a trivial representation of 0.  

Assuming that $Z$ and $W$ are not simultaneously 0, we note that $Z^2-aW^2 \ne 0$,  and we can rewrite \eqref{eq:aniso} as 
\[
\frac{X^2-aY^2}{Z^2-aW^2}=b
\]
or
\begin{equation}
\label{knorm}
\exists y \in k(\sqrt{a}): b=N_{k(\sqrt{a})/k}(y).
\end{equation}

Conversely, suppose \eqref{knorm} is true.  Then $b=U^2-aV^2$, where either $U \ne 0$, or $V \ne 0$.  Thus we have that $U^2 - aV^2 -b=0$.  Let $X=U, Y=V, Z=1, W=0$, and we have a non-trivial representation of 0 by the form \eqref{norm:0} over $k$.

\end{proof}

\begin{lemma}
 Let $H \subset k$ be a number field.  Let $a, b \in O_H$ be such that $\ord_{\pp}a=0$, $a$ is not a square in $M$ and $\ord_{\pp}b$ is odd.  (Such a number field $H$ and elements $a, b \in H$ exist by Lemma \ref{le:exist}.)   Then  \eqref{eq:aniso} is anisotropic over $k$ and $M$.  
\end{lemma}
\begin{proof}
Suppose \eqref{norm:0} is isotropic.  Then by Lemma \ref{le:norm}, we have that \eqref{knorm} is true.  Since $a$ is not a square  in $M$, $\ord_{\pp}a=0$, $\pp$ is not a dyadic prime, we have that the extension $M(\sqrt{a})/M$ is an unramified extension of degree 2 of  $\pp$-adically complete fields.   Since $M$ is a complete field, there is only one prime above $\pp$ in $M(\sqrt{a})$.  If $\ttt$ is the prime of $M(\sqrt{a})$ above $\pp$, then  the relative degree $f(\ttt/\pp)$ of $\ttt$ over $\pp$ satisfies $f(\ttt/\pp)=2$.   Thus,
\[
\ord_{\pp}N_{k(\sqrt{a})/k}(y)=f(\ttt/\pp)\ord_{\ttt}y \equiv 0 \bmod 2.
\]   
But $\ord_{\pp}b$ is odd.  So \eqref{knorm} cannot hold, and the form \eqref{norm:0} is anisotropic.  
\end{proof}
\begin{lemma}
\label{le:units}
Let $H$ be a number field.  Let $\qq$ be any prime of $H$.  Let $a, b \in H$  be units at $\qq$. Assume further that $a  \equiv 1 \bmod 4$.  Then  in $H_{\qq}$ the form \eqref{norm:0} is isotropic.  
\end{lemma}
\begin{proof}
If  $\qq$ is not dyadic, and $a$ is a unit at $\qq$, then $\qq$ is unramified in the extensions $H(\sqrt{a})/H$ and $H_{\qq}(\sqrt{a})/H_{\qq}$.  If $\qq$ is a dyadic prime, then given our assumption that $a \equiv 1 \bmod 4$, we also have that $\qq$ is unramified in the extensions $H(\sqrt{a})/H$ and $H_{\qq}(\sqrt{a})/H_{\qq}$. Further, since $b$ is a unit at $\qq$ also, it is a norm in the extension of $H_{\qq}(\sqrt{a})/H_{\qq}$, by local class field theory.  Hence  \eqref{knorm} can be solved in $H_{\qq}$.
\end{proof}
\begin{corollary}
\label{even power}
Let $ \qq, a, b, H$ be as above, but  assume $a$ is a unit at $\qq$ while $\ord_{\qq}b$ is even.  Then  in $H_{\qq}$ the form \eqref{norm:1} is isotropic.  
\end{corollary}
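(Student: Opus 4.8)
The plan is to reduce the even-order case to the preceding lemma, where $b$ is assumed to be a unit at $\pp$, exploiting the fact that isotropy of the form \eqref{eq:aniso} over $k_\pp$ is unchanged under multiplication of $b$ by a square. First I would fix a uniformizer $\pi$ at $\pp$, write $\ord_\pp b = 2m$, and set $b' = b/\pi^{2m}$, so that $b = \pi^{2m}b'$ with $b'$ a unit at $\pp$. Since $a$ is still a unit at $\pp$ that is not a square mod $\pp$, the pair $(a, b')$ satisfies the hypotheses of the previous lemma, and hence the form $X^2 - aY^2 - b'Z^2 + ab'W^2$ is isotropic over $k_\pp$; choose a nontrivial zero $(X_0, Y_0, Z_0, W_0)$ of it.

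Next I would transport this zero to the original form by the invertible substitution $Z \mapsto Z/\pi^m$, $W \mapsto W/\pi^m$ (leaving $X$ and $Y$ fixed), which is legitimate over $k_\pp$ because $\pi \neq 0$. Since $b/\pi^{2m} = b'$, this substitution carries $X^2 - aY^2 - bZ^2 + abW^2$ into $X^2 - aY^2 - b'Z^2 + ab'W^2$, so that the tuple $(X_0, Y_0, Z_0/\pi^m, W_0/\pi^m)$, which is again nonzero as $\pi \neq 0$, is a nontrivial zero of the original form. Hence \eqref{eq:aniso} is isotropic over $k_\pp$, as claimed.

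I expect no genuine obstacle here beyond this bookkeeping. Conceptually the result is just the statement that the local norm group of the unramified quadratic extension $k_\pp(a^{1/2})/k_\pp$ (unramified because $a$ is a unit, and inert because $a$ is a nonsquare mod $\pp$) consists of exactly the elements of even valuation, so that \eqref{knorm} is solvable whenever $\ord_\pp b$ is even. The scaling argument packages this fact by importing the already-proved unit case rather than redescribing the norm group directly.
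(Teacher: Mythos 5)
Your proposal is correct and follows essentially the same route as the paper: both rescale $b$ by an even power of a uniformizer to produce a unit $b'$ at $\pp$, invoke the preceding lemma for the unit case, and use the fact that multiplying $b$ by a square does not affect isotropy of the form \eqref{eq:aniso}. Your explicit substitution $Z \mapsto Z/\pi^m$, $W \mapsto W/\pi^m$ merely spells out the square-scaling invariance that the paper asserts without proof.
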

\begin{proof}
Let $\pi$ be a local uniformizing parameter with respect to $\qq$ in $H$.  If $\ord_{\qq}b$ is even, then we can replace  $b$ in the quadratic form by $\hat b=\pi^{2s}b$, where $s \in \Z$ and $\ord_{\qq}\pi^{2s}=-\ord_{\pp} b,$ without changing the status of the form with respect to being isotropic or anisotropic.  Observe that $\hat b$ is a unit at $\qq$, and the Lemma follows from Lemma \ref{le:units}.
\end{proof}

\begin{lemma}[Essentially Eisenstein Irreducibility Criteria]
\label{lemma:eisen}
Let $H, \qq$ be as above. Let  $a_0,\ldots, a_n \in H$,  be such that $\ord_{\qq}a_m=0$, $\ord_{\qq}a_i \geq r >1$, for $i=1, \ldots, m-1, \ord_{\qq}a_0 = r-1$ with $(m,r-1)=1$.   Let
\[
f(T)=a_mT^m +a_{m-1}T^{m-1} +\ldots + a_0 \in H[T]
\]
 In this case $f(T)$ is irreducible over $H_{\qq}$ and adjoining a root of $f(T)$ produces a totally ramified extension of $H_{\qq}$.
\end{lemma}
\begin{proof}
Let $\alpha$ be a root of $f(T)$ in the algebraic closure of $H_{\qq}$.  Let $\Qq$ be a prime above $\qq$ in $H_{\qq}(\alpha)$.  
If $\ord_{\Qq} \alpha \geq 0$, then, since $\ord_{\qq}a_m=\ord_{\Qq}a_m=0$, we have that
\[
\ord_{\Qq}\alpha^m=\ord_{\Qq}(-a_{m-1}\alpha^{m-1} - \ldots -a_1\alpha -a_0).
\]
Let $e=e(\Qq/\qq)$, and
note that
\[
\ord_{\Qq}(a_i\alpha^i)>e \cdot \ord_{\qq}a_i \geq er>e(r-1)=e \cdot \ord_{\qq}a_0.
\]
Thus, 
\[
\ord_{\Qq}\alpha^m=\ord_{\Qq}(-a_{m-1}\alpha^{m-1} - \ldots -a_1\alpha -a_0)=e \cdot \ord_{\qq}a_0=e(r-1)
\]
Since $(r-1,m)=1$, we conclude that $e=m$, and the polynomial is irreducible.

 Suppose now $\ord_{\Qq}\alpha <0$.  Then for any $i=1,\ldots, m-1$ we have 
 \[
 \ord_{\Qq}(\alpha^m) < \ord_{\Qq}(\alpha^i)<\ord_{\Qq}a_i\alpha^i.
 \]
 Therefore,  
 \[
 e(r-1)=\ord_{\Qq}a_0 =\ord_{\Qq}(-\alpha^m -a_{m-1}\alpha^{m-1}-\ldots -a_1\alpha)=m \cdot \ord_{\Qq}(\alpha),
 \]
 so that once again we have that $e=m$,  and $f(T)$ is irreducible.
 
\end{proof}
We now establish existence of an element of $K$ with a particular pole divisor.
\begin{lemma}
\label{le:T}
For any valuation $\uu$ of $K$, for all sufficiently large $s>0$, there exists $T \in K$ such that $\uu$ is the only valuation where $T$ has a pole, and $\ord_{\uu}T=-3^s$.
\end{lemma}
\begin{proof}
One can use the same proof as in Lemma 3.5 of \cite{Sh92}, except that one should replace $2$ by $3$.
\end{proof}
For the convenience of the reader we state a proposition due to Y. Pourchet (see \cite{Pourchet71}, Proposition 3) concerning representation of  polynomials by quadratic forms over rational function fields.
\begin{proposition}
\label{prop:Pourchet}
Let  $k$ be a field of characteristic $\ne 2$.   Let $a, b  \in k, f \in k[T]$, where $T$ is transcendental over $k$.  Then there exist $X, Y, W, Z \in k[T]$ such that $f=X^2-aY^2-bZ^2+abW^2$ if and only if  the following conditions are satisfied 
\begin{itemize}
\item For every prime factor $p(T)$ of  $f(T)$ over $k$ of odd multiplicity, we have that the form is isotropic in the residue field of  $k(T)$ modulo $p(t)$.
\item The form represents the leading coefficient of $f(T)$ over $k$.
\end{itemize}
\end{proposition}

\begin{proposition}
\label{prop:p-adic}
Let $k$ be a field  such that $k$ has an embedding into a finite extension $M$ of $\Q_p$ for an odd rational prime $p$.   Let $L=\tilde \Q \cap M$, let $O_L$ be the ring of algebraic integers of $L$, and let $a, b \in O_L$ be relatively prime.  Let $\pp$ lie above a prime $p$ in $M$,  assume  that  $a$ is not a square at $\pp$ and $\ord_{\pp}b$ is odd.  Assume further that $a\equiv 1 \bmod 4$.   (These assumptions can be realized since $k$ can be embedded into a finite extension of $\Q_p$ for an odd $p$, and by the Strong Approximation Theorem.)  Let $\uu$ be a prime of $K$ of degree 1,  let $T$ be transcendental over $k$, and let $K$ be a finite extension of $k(T)$, so that $k$ is relatively algebraically closed in $K$.  Assume further that $\ord_{\uu}T=-3^s$, and $T$ has no other poles. Let $f \in K$.  
\begin{enumerate}
\item If $\ord_{\uu}f <0$ and $\ord_{\uu}f$ is odd, then for any $\xi \not=0, \mu \in k$, the equation 
\begin{equation}
 \label{norm:3}
X^2-aY^2 -bZ^2 +abW^2=\xi f +\mu
\end{equation}
has no solution in $K$.
\item Let $H \subset k$ be a number field containing $a, b$.  If $f \in H[T]$, and $\ord_{\uu}f$ is even (or in other words $\deg f$ is even), then there exist $\xi\not=0, \mu \in H$ such that \eqref{norm:3} has a solution in $H(T) \subset K$.
\item For each $f$, there exists a finite set $\calQ$ of primes of $H$ and a positive constant $N$ such that if $\xi_1, \mu_1 \in H$ and are such that $\ord_{\qq}(\xi-\xi_1)>N, \ord_{\qq}(\mu-\mu_1)>N$ for all $\qq$, then 
\begin{equation}
 \label{norm:3.1}
X^2-aY^2 -bZ^2 +abW^2=\xi_1 f +\mu_1
\end{equation}
has a solution $(X,Y, Z, W) \in  K^4$.

\end{enumerate}
\end{proposition}
\begin{proof}
Suppose $f \in K$ and $\ord_{\uu}f $ is odd.    In this case, for any $\xi\not=0,  \mu \in k$ we know that $\ord_{\uu} (\xi f  +\mu)=\ord_{\uu}f <0$ and  it is odd. So, by Proposition \ref{prop:aniso} applied to $h=\xi f +\mu$ we conclude that \eqref{norm:3} has no solutions in $K$.

If we consider the form $X^2-aY^2 -bZ^2 +abW^2$ over $H$, then we note that any four dimensional form is universal locally at any non-archimedean prime $\qq$, i.e. it represents every element of the field. Without loss of generality, by Lemma \ref{le:d} we can assume that $k$, and therefore $H$ have no real embeddings. By the Hasse-Minkowski local-global principle, we can conclude that the form is universal over $H$.  Thus, if $f \in H$, (and therefore $\ord_{\uu}f=0$), the equation \eqref{norm:3} can be satisfied.

Now assume that $f \in H[T]\setminus H$, and $\ord_{\uu}f$ is even (or in other words $\deg f$ is even).   We now show that for some constants $\xi \not=0, \mu \in H$ the quadratic form equation \eqref{norm:3} has solutions in $H(T)$.  

We start with examining the isotropic/anisotropic status of \eqref{norm:0} over $H$. If the form is isotropic in $H$, then by Proposition \ref{prop:Pourchet},  we are done, since the form can represent any constant in $H$.  Suppose the form is anisotropic over $H$. Then, by the Hasse-Minkowski Theorem,  for some prime  $\qq$ of $H$, the form is anisotropic over $H_{\qq}$.  Further, since for all but finitely many primes $\qq$ of $H$ we have that $a, b$ are units at $\qq$, we have that the form \eqref{norm:0} is isotropic over $H_{\qq}$ for all but finately many $\qq$, by Lemma \ref{le:units}.

 We   now describe the set of conditions on $\xi$ and $\mu$ making sure that 
 \begin{enumerate}
 \item $\xi f +\mu$ is irreducible over $H(T)$, and
 \item   if $\ttt$ is a prime of $H(T)$ corresponding to $\xi f+\mu$, then in the residue field of $\ttt$, the norm \eqref{norm:0} becomes isotropic.
 \end{enumerate}
  Let $\qq$ be a prime of $H$ such that $\ord_{\qq}b$ is odd,  and $a$ is not a square$\mod \qq$ (in other words, $\qq$ is one of finitely many primes where the quadratic form \eqref{norm:0} is anisotropic in $H_{\qq}$). Let $\pi_{\qq} \in O_H$ be an element of order 1 at $\qq$. (Such an element exists by the Weak Approximation Theorem.)  Let $a_0,\ldots, a_{n} \in H$ and assume
\[
f(T)=a_nT^n+\ldots +a_0=a_n\left (\frac{\pi_{\qq}^r T}{\pi_{\qq}^r}\right)^n +a_{n-1}\left(\frac{\pi_{\qq}^r T}{\pi_{\qq}^r}\right)^{n-1} +\ldots + a_0=
\]
\[
a_n\left (\frac{U}{\pi_{\qq}^r}\right)^n +a_{n-1}\left (\frac{U}{\pi_{\qq}^r}\right)^{n-1} +\ldots + a_0,
\]
where $r$ is a non-negative integer such that $\displaystyle \ord_{\qq}\pi_{\qq}^r\frac{a_i}{a_n}>2$   for any coefficient $a_i, i=0,\ldots, n-1$, and $U=\pi_{\qq}^rT$.
Now set $\displaystyle \xi_{\qq}=\frac{\pi_{\qq}^{nr}}{a_n}$ and let $g(U)=\xi_{\qq}f(T)$.  (Observe that $\xi_{\qq} \ne 0$ as required.)  Let
\[
g(U)=U^n+c_{n-1}U^{n-1} +\ldots +c_0.
\]
Then
\[
\displaystyle c_i=\xi_{\qq }\frac{a_i}{\pi_{\qq}^{ri}}=\frac{(\pi_{\qq})^{nr}}{a_n}\frac{a_i}{\pi_{\qq}^{ri}}=\pi_{\qq}^{nr-ri}\frac{a_i}{a_n}=\pi_{\qq}^{nr-ri-r}\frac{\pi_{\qq}^r a_i}{a_n}
\]
Thus, 
\[
\ord_{\qq}c_i >2+(nr-ri-r)\geq 2.
\] 
for $i=1,\ldots,n-1$, and $\ord_{\qq}c_0 > (n-1)r+2$.  

Next let $\ord_{\qq}\mu_{\qq}=1$ and observe that $h(U)=g(U)+ \mu_q$ is irreducible by Lemma \ref{lemma:eisen}, and adjoining a root $\alpha$ of $h(U)$ to $H$ will ramify $\qq$ with even ramification degree.  This will make the quadratic form in \eqref{norm:1} isotropic at the factors above $\qq$ in $H(\alpha)$ by Corollary \ref{even power}.  

Let $\calQ$ be the set of all primes $\qq$ of $H$ such that the form \eqref{norm:0} is anisotropic over $H_{\qq}$.  Let $\pi \in O_H$ be such that $\ord_{\qq}\pi=1$ for all $\qq \in \calQ$.  Now let $\xi=\frac{\pi^r}{a_n}$, where $r$ is a non-negative integer such that for all $\qq \in \calQ$ we have that $\displaystyle \ord_{\qq}\pi^r\frac{a_i}{a_n}>2$   for any coefficient $a_i, i=0,\ldots, n-1$.  Let $\mu \in H$ be such that $\ord_{\qq}\mu=1$ for all $\qq \in \calQ$.  Such elements $\pi, \mu$ exist in $H$ by the Weak Approximation Theorem, once again.  Now, by the argument above, we see that in $H(\alpha)$ the form \eqref{norm:0} is isotropic locally at all primes of $H(\alpha)$, and therefore the form is isotropic over $H(\alpha)$, by the Hasse-Minkowski Theorem.  

As discussed above, we can assume that $H$ has no real embeddings and therefore a four-dimensional quadratic form  is universal over $H$.  So the leading coefficient of $h(U)$ is represented by the form. 
  We now apply Proposition \ref{prop:Pourchet} to reach the desired conclusion.\\
  
Now let  $\xi_1, \mu_1 \in H$, let $\hat g(U)=\xi_1f(T)=\hat c_nU^n+\hat c_{n-1}U^{n-1} +\ldots +\hat c_0$.   Then $\hat c_i=\xi_1\frac{a_i}{\pi^{ri}}$.  Let $\qq \in \calQ$ and  note that $\ord_{\qq}(c_i-\hat c_i)=\ord_{\qq}(\xi-\xi_1)+ \ord_{\qq}\frac{a_i}{\pi^{ri}}$.  Let $M>0$ be large enough so that 
  \[
  M +\min_{i,\qq}(\ord_{\qq}\frac{a_i}{\pi^{ri}}) >\ord_{\qq}(\xi\frac{a_i}{\pi^{ri}})=\ord_{\qq}c_i.
  \]
    Pick $\xi_1$ such that $\ord_{\qq}(\xi-\xi_1) >M$ for all $\qq \in \calQ$ and observe that for all $\qq \in \calQ$, we now have that $\ord_{\qq}(\hat c_i -c_i) > \ord_{\qq} c_i$.   Then for all $\qq \in \calQ$, we get that 
    \[
    \ord_{\qq}\hat c_i=\ord_{\qq}((\hat c_i-c_i) +c_i)=\ord_{\qq} c_i \geq 2
    \]
 for $i >1$,    and $\ord_{\qq}\hat c_n=0$.  Now let $\mu_1$ be such that for all $\qq$ we have that 
 \[
 \ord_{\qq}(\mu-\mu_1)>2.  
 \]
 Then $\ord_{\qq}\mu_1=\ord_{\qq}\mu$ by the same argument as for $\xi_1$.  Thus, $\ord_{\qq}\mu_1=1$.  We now can apply Lemma \ref{lemma:eisen} to the polynomial $\hat h=\xi_1f(T)+\mu_1$ to conclude that all primes in $\calQ$ ramify in the residue field of extension generated by $\hat h$ ramification degree divisible by $2$ and the proceed in the same manner as we did for polynomial $h$.
  \end{proof}

We now consider the case of $k$ with a real embedding.  First we prove the following lemma.
\begin{lemma}
\label{le:unr}
Let $H$ be a number field such that $\sqrt{2} \in H$.  Let $\ttt$ be a dyadic prime of $H$.  Then $-1 \in H_{\ttt}$, and therefore the extension $H(i)/H$ is unramified at all primes of $H$.
\end{lemma}
\begin{proof}
Note that $(1+\sqrt{2})^2=3+2\sqrt{2}\equiv -1 \bmod 2\sqrt{2}$.  So if we set $g(x)=x^2+1$, then $g(1+\sqrt{2})\equiv 0 \bmod 2\sqrt{2}$, while $g'(1+\sqrt{2})=2+2\sqrt{2} \equiv 0 \bmod 2$.  Thus, by Hensel's Lemma, $g(x)$ has a root in $H_{\ttt}$.  Hence, in the extension $H(i)/H$ the local degree at $\ttt$ is 1, and therefore $\ttt$ is not ramified in this extension.  

Since dyadic primes are the only primes possibly ramified in the extension $H(i)/H$, we see that no prime of $H$ is ramified in the extension $H(i)/H$.

\end{proof}
Without loss of generality, by Lemma \ref{le:d}, we can assume that $k$ contains the square root of $2$. (Adjoining the square root of $2$ will not change the existence of a real embedding.)  Let $L=M \cap \tilde \Q$, as above.  Let $H \subset L$ be a number field containing a square root of 2.  We now prove an analog of Proposition \ref{prop:p-adic} for fields with a real embeddings.
 
\begin{proposition}
\label{prop:r}
Let $k$ be a field with a real embedding and containing a square root of 2.  Let $T$ be transcendental over $k$ and let $K$ be a finite extension of $k(T)$ so that $k$ is relatively algebraically closed in $K$.  Let $\uu$ be a prime of $K$ of degree 1,  and assume  $\ord_{\uu}T=-3^s$, and $T$ has no other poles.  Let $f \in K$.  
\begin{enumerate}
\item If $\ord_{\uu}f <0$ and is odd, then for any $\xi\not=0, \mu \in k$, the equation 
\begin{equation}
 \label{norm:4}
X^2+Y^2 +Z^2 +W^2=\xi f +\mu
\end{equation}
has no solution in $K$.
\item If for some number field $H \subseteq k$ containing $\sqrt{2}$, we have that $f \in H[T]$ and $\ord_{\uu}f$ is even (or in other words $\deg f$ is even), then there exist $\xi\not=0, \mu  \in k$ such that \eqref{norm:4} has a solution.
\item If $\xi, \mu \in k$ are as above, then there exists $\delta >0$ such that for all $\xi_1, \mu_1$ with $|\xi-\xi_1| <\delta, |\mu-\mu_1| < \delta$ for all archimidean absolute values $|...|$ of $k$, then 
\begin{equation}
\label{norm:4.1}
X^2+Y^2 +Z^2 +W^2=\xi_1 f +\mu_1
\end{equation}
has solutions in $K$.
\end{enumerate}
\end{proposition}
\begin{proof}
We first note that the quadratic form in \ref{norm:4} is anisotropic over $\R$, and therefore anisotropic over $k$.  Thus, if $\ord_{\uu}f$ is odd, then the proposition holds by the same argument  as in Proposition \ref{prop:p-adic}.  The same is true if $f \in H$.  (By the Weak Approximation Theorem, we can always pick $\xi, \mu\in H$ so that the image of $f$ under any real embedding is positive.)

So, assume now that $\deg f$ is even and $f$ is not a constant.  It is enough to show that \eqref{norm:4} can be satisfied in $H(T)$.  We again start by examining anisotropic/isotropic status of the form in question over $H$.  As in Lemma \ref{le:d}, it is easy to see that the quadratic form in \eqref{norm:4} is isotropic if and only if $-1$ is a norm in the extension $H(i)/H$.   By the Hasse-Minkowski Theorem, it is enough to have that $-1$ is a norm in all completions of $H$.  Since, $H$ contains $\sqrt{2}$, by Lemma \ref{le:unr}, we have that the extension $H_{\qq}(i)/H_{\qq}$ is unramified for all primes $\qq$ of $H$, and therefore $-1$ is a norm by local class field theory at all primes $\qq$ of $H$.  Thus, the only completion, where $-1$ is not a norm is the real one.

 We choose $\xi$ so that the leading coefficient of $\xi f$ is positive under all real embeddings.  Such a $\xi$ exists by the Weak Approximation Theorem, once again.  This step will also make sure that the leading coefficient of $\xi f+\mu$ is representable by the form over $H$.  We also choose $\mu >0$ large enough so that $\xi f +\mu$ has no roots in $\R$ under all real embeddings.  Let $h(T)=\xi f(T)+\mu$, and let $g(T)$ be an irreducible factor of $h(T)$.  Then $g(T)$ has no roots in $\R$ under any real embedding of $H$, and therefore must be of even degree.  Further if we adjoin a root $\alpha$ of $g(T)$ to $H$, the extended field  $H(\alpha)$ will have no real embeddings, and the left side of \eqref{norm:4} will become isotropic.  Thus we can apply Proposition \ref{prop:Pourchet} again to reach the desired conclusion.
 
 If $\xi_1$ is sufficiently close to $\xi$ under all archimedean absolute values of $k$, then the leading coefficient of $\xi_1 f$ is also positive under all real embeddings.  Similarly, if $\mu_1$ is sufficiently close to $\mu$ under all archimedean valuations of $k$, then $h_1(T)=\xi_1 f(T)+\mu_1$ has no real roots under all real embeddings of $k$.
\end{proof}

We now address the issue of giving a Diophantine definition of a set of constants guaranteed to contain constants $\xi_1, \mu_1$ we used in  Propositions \ref{prop:p-adic} and \ref{prop:r}.
\begin{proposition}
\label{prop:constants}
The following statements are true.
\begin{enumerate}
\item \label{it:1} There exists a Diophantine over $K$ set of constants $A$ such that for any number field $H \subseteq k$ and any finite collection $\qq_1, \ldots, \qq_r$ of primes of $H$, the set $\{(b,\ldots, b), b\in A \cap H\} \subset A^r$ is dense in $H_{\qq_1} \times \ldots  \times H_{\qq_r}$ under the product topology.
\item \label{it:2} There exists a Diophantine over $K$ set of constants $A$ such that for any number field $H \subseteq k$ and all real embeddings $\sigma_1, \ldots, \sigma_r$  of $H$, the set $\{(b,\ldots, b), b \in A \cap H\}\subset A^r$ is dense in $\sigma_1(H)\times \ldots  \times \sigma_r(H)$ under the product topology.

\end{enumerate}
\end{proposition}
\begin{proof}
   For the $p$-adic  case the proof follows from Theorem 5.5 of \cite{Eispadic}.   
For the Archimedean case, we use Lemma 3.6 and Section 3.6 of \cite{PS} together with Proposition 5.1 of \cite{Eispadic}.   
  \end{proof}
  
Before completing the proof of Theorems \ref{thm: p-order} and \ref{thm: r-order}, we need the following lemmas.
\begin{lemma}
\label{le:const}
Let $\qq$ be a prime divisor of $K$ of degree greater than 1.  Then there exists a finite constant field extension $\hat{k}$  of $k$ such that in $\hat{k}K$ the divisor $\qq$ has at least one factor of degree 1.
\end{lemma}
\begin{proof}
Let $R_{\qq}$ be the residue field of $\qq$.   Then $R_{\qq}$ is isomorphic to a finite extension of $k$, and we can identify $R_{\qq}$ with this extension.   Let $s(T) \in k[T]$ be the monic irreducible polynomial of a generator $\alpha$ of $R_{\qq}$ over $k$.   Let $\qq_i$ be a factor of $\qq$ in $K(\alpha)$.  Let $R_i$ be the residue field of $\qq_i$. Since the power basis of $\alpha$ is an integral basis with respect to $\qq$, we can determine the factorization of $\qq$ in $K(\alpha)$ by considering the factorization of $s(T)$ over $R_{\qq}$ (see \cite{L}, Chapter I, Section 8, Proposition 25).  By assumption on $s(t)$ we have that over $R_{\qq}$ it has at least one factor of degree 1.
\end{proof}
\begin{lemma}
\label{le:has a pole}
Let $h, g \in K$ be such that for some prime $\aaa$ of  $K$, it is the case that  $\ord_{\aaa}g=-3^s$, and $\ord_{\aaa}(h^{3^{s+1}}+g)$  is  either non-negative or even,  Then, $\ord_{\aaa}h <0$, and $\ord_{\aaa}h \equiv 0 \bmod 2$.
\end{lemma}
\begin{proof}
First assume that $\ord_{\aaa}h \geq 0$.  Then $\ord_{\aaa}(h^{3^{s+1}}+g)=\ord_{\aaa}g=-3^s$, contradicting our assumptions.  Thus $\ord_{\aaa}h<0$, and $\ord_{\aaa}h^{3^{s+1}} < \ord_{\aaa}g$.  Consequently, $\ord_{\aaa}(h^{3^{s+1}}+g)=\ord_{\aaa}h^{3^{s+1}} \equiv 0 \bmod 2$, and the conclusion of the lemma holds.
\end{proof}

We now complete the proof of  Theorem \ref{thm: p-order}.  
\subsection{Proof of Theorem \ref{thm: p-order}}
\subsubsection{A special element $T$}
\label{sub:T}
Let $k$ be algebraic over $\Q$ and embeddable into a finite extension $M$ of $\Q_p$.  Without loss of generality we can assume that $k$ does not embed into $\R$, by adjoining, if necessary, a square root of $-1$ to $k$. A finite extension of $k$ will continue to be embeddable into a finite extension of $\Q_p$.  Further, by Lemma \ref{le:d}, we can translate Diophantine definitions obtained for the extended field into Diophantine definitions over the original field.  

Next let $\uu$ be a prime of $K$.  By Lemmas \ref{le:d} and \ref{le:const}, using the same reasoning as above, we can assume  that $\uu$  is of degree 1.  Let $T \in K\setminus k$.  For all sufficiently large $s \in \Z_{>0}$, by Lemma \ref{le:T}, we can find a $T \in K$ that has a pole at $\uu$ of order $3^s$, and no other poles. Note that $\uu$ is totally ramified over $k(T)$ with the ramification degree $3^s$.   Hence, for any $h \in k(T)$ we have that $3^s \cdot \deg {h}=-\ord_{\uu}h$, and $\deg h \equiv 0 \bmod 2$ if and only if $\ord_{\uu}h \equiv 0 \bmod 2$.

\subsubsection{Defining a subset of $K$ containing all polynomials in $T$ of even degree, and no element of $K$ with an odd degree pole at $\uu$}
\label{sub:pol}
Let $h \in k[T]$.  Then by Proposition \ref{prop:p-adic} and Proposition \ref{prop:constants}, Part \ref{it:1}, we have that $\ord_{\uu}h \equiv 0 \bmod 2$ or equivalently $\deg h \equiv 0 \bmod 2$ if and only if  there exists $\xi, \mu, X, Y, Z, W \in K$ such that 
\begin{align}
 \label{eq:h}
X^2-aY^2 -bZ^2 +abW^2=\xi h +\mu,\\
\label{eq:h2}
\xi \in A \setminus \{0\}, \mu \in A.
\end{align}
We remind the reader that $A \subset k$ is Diophantine over $K$.
At the same time, if $h \in K$ satisfies \eqref{eq:h}, then either $h$ has no pole at $\uu$ or this pole is of even order.

\subsubsection{Defining a subset of $K$ containing all rational functions in $T$ of even degree and no element of $K$ of odd order at $\uu$}
Please note that for any $h \in k(T) $, we have that $\ord_{\uu}h \equiv 0 \bmod 2$  if and only if $\displaystyle h=\frac{h_1}{h_2}$, where $ h_1, h_2 \in k[T] \setminus k$, $h_2(T) \ne 0$ and $\deg {h_1} \equiv \deg {h_2} \equiv 0 \bmod 2$.  Indeed, suppose $\displaystyle h=\frac{g_1}{g_2}$, $g_2 \ne 0, g_1, g_2 \in k[T]\setminus k$.  If $\deg h$ is even, then $\deg {g_1} -\deg {g_2}$ is even.  Suppose $\deg {g_1}=2r+1,  r\in \Z_{>0}$.  Then note that $\displaystyle h=\frac{Tg_1}{Tg_2}$, where we can set $h_1=Tg_1, h_2=Tg_2$ to reach the desired conclusion.  If either $g_1$ or $g_2$ is a constant, then let $h_1=T^2g_1, h_2=T^2g_2$ to reach the desired conclusion once again.

Therefore, $h \in k(T), \deg h\equiv 0 \bmod 2$ if and only if 
there exists $\xi_{r}, \mu_{r}, X_{r}, Y_{r},  Z_{r}, W_{r} \in K$ such that 
\begin{align}
 \label{eq: h1}
X_{r}^2-aY_{r}^2 -bZ_{r}^2 +abW_{r}^2=\xi_{r} (h^{3^{s+1}}_r +T) +\mu_{r},\\
\xi_{r} \in A \setminus \{0\}, \mu_{r} \in A.
\end{align}
where $r=1,2$, $h_2 \ne 0$ and $\displaystyle h =\frac{h_1}{h_2}$.
Indeed, by Proposition \ref{prop:aniso}, we have that $\deg {h_1^{3^{s+1}} +T}, r=1,2$ is either even or non-positive.  By Lemma \ref{le:has a pole}, we then conclude that $\deg {h_r }>0$ and is even.  Consequently, $\deg h$ is even.
Conversely, if $\deg h$ is even, we can write $h$ as a ratio of two polynomials $h_1, h_2$ of positive even degrees.  If $\deg {h_r}$ is positive and even, then $\deg {h_r^{3^{s+1}}+T}$ is also positive and even, and therefore we can satisfy \eqref{eq: h1} over $K$ by Proposition \ref{prop:p-adic}.
Finally, Proposition \ref{prop:p-adic} and Lemma  \ref{le:has a pole} imply that  for any $h_1, h_2 \in K$, we have that \eqref{eq: h1} implies that $h_1, h_2$ have an even order pole at $\uu$.

\subsubsection{Defining a subset of $K$ containing all rational functions of $T$ integral at $\uu$ (or of non-positive degree) and no functions of $K$ with a pole at $\uu$}
Let $f \in k(T)$ be such that $\deg {f^{2\cdot 3^{s+1}}T+T^2} \equiv 0 \bmod 2$.  We claim that in this case $\deg f \leq 0$.  Suppose not.  Then $\deg{f^{2\cdot 3^{s+1}}T} > \deg{T^2}$, and 
\[
\deg{f^{2\cdot 3^{s+1}}T+T^2}=\deg{f^{2\cdot 3^{s+1}}T}=2\deg{f^{3^{s+1}}}+1 \equiv 1 \bmod 2.
\]
At the same time, if $\deg f \leq 0$, then $\deg{f^{2\cdot 3^{s+1}}T} < \deg{T^2}$, and  
\[
\deg{f^{2\cdot 3^{s+1}}T+T^2}=\deg{T^2}=2\equiv 0 \bmod 2.
\]
Consider now the set of $f \in K$ satisfying the following equations.

\begin{align}
 f^{2\cdot 3^{s+1}}T+T^2=\frac{h_1}{h_2}   \label{eq:begin1}\\
X_{j,r}^2-aY_{j,r}^2 -bZ_{j,r}^2 +abW_{j,r}^2=\xi_{j,r} (h^{3^{s+1}}_r+T) +\mu_{j,r}, j,r=1,2, \\
 \xi_{i,r} \in A \setminus \{0\}, \mu_{i,r} \in A. \label{eq:end1} 
 \end{align}

By Proposition \ref{prop:aniso} and Lemma \ref{le:has a pole}, we have that  $\ord_{\uu}h_r\equiv 0 \bmod 2$.  Therefore, 
\[
\ord_{\uu}(f^{2\cdot 3^{s+1}}T+T^2) \equiv 0 \bmod 2,
\]
 and thus $\ord_{\uu}f\geq0$.  At the same time, if $f \in k(T)$, and $\deg f$ is even, then we can choose $h_r \in k[T]$ to be of even positive degree, and by Proposition \ref{prop:p-adic}, we can satisfy \eqref{eq:begin1}--\eqref{eq:end1}.

\subsubsection{Defining the valuation ring $V_{\uu}$ of $\uu$ in $K$}
Let $n := [K:k(T)]=3^s$.  We claim that $V_{\uu}$ can be defined as follows: $w \in V_{\uu}$ if and only if  there exist 
\[
\xi_{i,r}, \mu_{i,r}, a_i, h_{i,r} , X_{i,r}, Y_{i,r}, Z_{i,r}, W_{i,r} \in K
\]
 with   $r \in \{1,2\} , i \in \{0,\ldots, n-1\}$ such that
\begin{equation}
\label{eq:begin}
w^n +a_{n-1}w^{n-1} +\ldots + a_0=0~\&
\end{equation}
\begin{equation}
\bigwedge_{i=0}^{n-1}(( Ta_i^{2\cdot 3^{s+1}}+T^2)=\frac{h_{i,1}}{h_{i,2}})~\&
\end{equation}
\begin{equation}
\label{eq:middle}
\bigwedge_{i=0}^{n-1} \bigwedge_{r=1}^2 X_{i,r}^2-aY_{i,r}^2 -bZ_{i,r}^2 +abW_{i,r}^2=\xi_{i,r} (h_{i,r}^{3^{s+1}} +T)+\mu_{i,r } ~\&
\end{equation}
\begin{equation}
\label{eq:end}
\forall i \in \{0,\ldots, n-1\}, r \in \{1,2\} ~ (\xi_{i,r} \in A \setminus \{0\} ~\& ~\mu_{i,r} \in A).
\end{equation}
First suppose that Equations \eqref{eq:begin}--\eqref{eq:end} are satisfied.  Then, since $A \subset k$, and $\xi_{i,r} \ne 0$, by Proposition \ref{norm:1}, for any degree 1 prime $\aaa$ of $K$, we have that  $\ord_{\aaa} (\xi_{i,r} (h_{i,r}^{3^{s+1}} +T))$ is even.  Therefore, $\ord_{\aaa} (h_{i,r}^{3^{s+1}}+T)$ is either bigger or equal to 0, or is even.   
In particular, we have that $\ord_{\uu}(h_{i,r}^{3^{s+1}}+T) \geq 0$ or $\ord_{\uu}(h_{i,r}^{3^{s+1}} +T) \equiv 0 \bmod 2$.  Now by Lemma \ref{le:has a pole}, we conclude that $\ord_{\uu}h_{i,r}\equiv 0 \bmod 2$ (and $\ord_{\uu}h_{i,r}<0$).  Thus,   $\ord_{\uu}(Ta_i^{2\cdot 3^{s+1}}+T^2)$ is even, and we now have  that $\forall i\in \{0,\ldots.n-1\} ~\ord_{\uu}a_i \geq 0$.  

Suppose now that $\ord_{\uu}w<0$.  In this case, 
\[
\ord_{\uu}(w^n +a_{n-1}w^{n-1} +\ldots + a_0)=n\ord_{\uu}w <0,
\]
 contradicting the fact that 
 \[
 \ord_{\uu}(w^n +a_{n-1}w^{n-1} +\ldots + a_0)=\ord_{\uu}0=\infty. 
 \]
 Therefore, if for some $w \in K$ we have that Equations \eqref{eq:begin}--\eqref{eq:end} can be satisfied over $K$, then $\ord_{\uu}w \geq 0$.  

Conversely, suppose $w \in K, \ord_{\uu}w\geq 0$.  Then $w$ is integral  over the local  subring $R_{1/T}$ of $k(T)$ containing all functions $a \in k(T)$ without a pole at the valuation that is the pole of $T$ in $k(T)$.  In other words, $R_{1/T}$ consists of all rational functions $a \in K(T)$ with $\deg a \leq 0$, or, equivalently $\ord_{\uu}a\geq 0$.  Hence, $w$ will be a root of a polynomial in \eqref{eq:begin}, such that $a_i \in R_{1/T}$.  If $\ord_{\uu}a_i \geq 0$, then $\ord_{\uu}(T\cdot a_i^{2\cdot 3^{s+1}}+T^2)$ is a rational function in $T$ of even degree.  Consequently, we can write each $\displaystyle a_i=\frac{h_{i,1}}{h_{i,2}}$, where $h_{i,r}$ are polynomials in $T$ of even degrees.  Thus, $h_{i,r}^{3^{s+1}} +T$ is a polynomial in $T$ of even degree, and we can find constants in $A$ so that \eqref{eq:middle} can be satisfied over $K$.  This concludes the proof of Theorem \ref{thm: p-order}.\\

We now proceed to prove Theorem \ref{thm: r-order}.
\subsection{Proof of Theorem  \ref{thm: r-order}}
In almost every way the proof of Theorem \ref{thm: r-order} is the same as the proof of Theorem \ref{thm: p-order}.  So we confine ourselves to discussing only those parts where there are differences. We will consider every part of the proof of Theorem \ref{thm: p-order} and indicate what changes, if any, are required.

	We start with examining Subsection \ref{sub:T}.  In this part of the proof we consider a prime $\uu$ and determine whether we can assume that $\uu$ is of degree 1.  For Theorem \ref{thm: r-order}, we consider only primes $\uu$ with residue fields embeddable into $\R$.  If $\uu$ is not of degree 1, then its residue field is isomorphic to a  finite extension $\hat k$ of $k$. By Lemma \ref{le:const}, in the extension $\hat kK$ of $K$ the prime $\uu$ will have a factor of degree 1.  As in Subsection \ref{sub:T}, we can extend our field of constants $k$, but the extended field must still be embeddable into $\R$.  This condition will be satisfied for $\hat k$, given our assumptions on $\uu$.  Thus, as in the proof of Theorem \ref{thm: p-order}, we can assume that $\uu$ is of degree 1. We can again produce an element $T \in K$ such that $\uu$ is the only pole of $T$ and $\ord_{\uu}T=-3^s$.
	
	From this point on, the proof of Theorem \ref{thm: r-order} is exactly the same as the proof of Theorem \ref{thm: p-order} with \eqref{eq:h} replacing \eqref{norm:4}, and the set $A$ defined to satisfy the conditions of Proposition \ref{prop:r}.
  The existence of such a set $A$  follows from Proposition \ref{prop:constants}\eqref{it:2}.  	

\section{Diophantine Definition of C.E. Sets over Rings of Integral Functions}
\label{sec:gence}
In this section we extend results of J.\ Demeyer to show that c.e.\ sets are definable over any ring of integral functions, assuming the constant field is a number field.  Since Demeyer showed that such a result holds over polynomial rings over number fields, and since rings of integral functions are finitely generated modules over polynomial rings, it is enough to show that we can give a Diophantine definition of polynomial rings over the rings of integral functions to achieve the desired result.  Below we state the main theorem of the section.
\begin{theorem}
\label{thm:extce}
Let $K$ be a function field over a field of constants that is a finite extension of $\Q$, and let $\calS$ be a finite non-empty collection of its valuations. Then every c.e.\ subset of $O_{K,\calS}$ is Diophantine over $O_{K,\calS}$.
\end{theorem}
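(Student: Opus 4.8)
The plan is to reduce the statement to the polynomial ring $\Z[T]$, over which the desired conclusion is already available. I would combine two ingredients: the theorem just proved, that $\Z[T]$ is Diophantine over $O_{K,\calS}$ for every non-constant $T\in K$, and the result of Denef \cite{Den0} (as used by Demeyer in \cite{Demeyer[10]}) that every computably enumerable relation over $\Z[T]$ is Diophantine over $\Z[T]$. The only remaining work is to encode an arbitrary element of $O_{K,\calS}$ by a finite tuple of elements of $\Z[T]$ in a way that is simultaneously Diophantine over $O_{K,\calS}$ and computable with respect to the effective indexing $\theta$ of $O_{K,\calS}$ (see \cite{Sh34}). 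The reason an encoding is needed, rather than a direct module decomposition, is that $O_{K,\calS}$ contains the whole constant field $k$ and hence all of $\Q$, so it is \emph{not} finitely generated as a $\Z[T]$-module; I would get around this by using coordinates in the field $\Q(T)$ and recording a numerator and a denominator from $\Z[T]$ for each coordinate.

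First I would fix a non-constant $T\in O_{K,\calS}$. Since $T$ can have poles only at primes of $\calS$, we have $\Z[T]\subseteq\Q[T]\subseteq O_{K,\calS}$. Put $N=[K:\Q(T)]$, which is finite because $K/k(T)$ and $k/\Q$ are finite, and choose a basis $\beta_1,\dots,\beta_N$ of $K$ over $\Q(T)$ with every $\beta_j\in O_{K,\calS}$; such a basis is obtained from any $\Q(T)$-basis by clearing the finitely many poles outside $\calS$ with a suitable multiplier in $O_{K,\calS}$, using a corollary to the Riemann--Roch theorem. Every $x\in O_{K,\calS}\subseteq K$ then admits a representation
\begin{equation*}
x=\sum_{j=1}^{N}\frac{a_j}{b_j}\,\beta_j,\qquad a_j,b_j\in\Z[T],\quad b_1\cdots b_N\neq 0,
\end{equation*}
which, after clearing denominators, is equivalent to the polynomial identity
\begin{equation*}
\Big(\prod_{l=1}^{N}b_l\Big)\,x=\sum_{j=1}^{N}a_j\Big(\prod_{l\neq j}b_l\Big)\beta_j
\end{equation*}
between elements of $O_{K,\calS}$. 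By the theorem just proved, the condition ``$a_j,b_j\in\Z[T]$'' is Diophantine over $O_{K,\calS}$, so the relation $E\bigl(x;a_1,b_1,\dots,a_N,b_N\bigr)$ expressing that the listed polynomials form such a representation of $x$ is Diophantine over $O_{K,\calS}$.

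Next, given a computably enumerable set $A\subseteq O_{K,\calS}^{\,n}$, I would form its pullback under this encoding. Because the map sending a tuple of elements of $\Z[T]$ to the element $\sum_{j}(a_{i,j}/b_{i,j})\beta_j$ of $O_{K,\calS}$ is computable relative to $\theta$, the set
\begin{equation*}
\widehat A=\Bigl\{\,(a_{i,j},b_{i,j})_{i\le n,\,j\le N}\ :\ \textstyle\prod_{i,j}b_{i,j}\neq 0\ \text{and}\ \bigl(\sum_{j}\tfrac{a_{i,j}}{b_{i,j}}\beta_j\bigr)_{i\le n}\in A\,\Bigr\}
\end{equation*}
is a computably enumerable relation over $\Z[T]$. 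By the cited result of Denef it is Diophantine over $\Z[T]$, and hence, again by the theorem just proved, Diophantine over $O_{K,\calS}$. Finally,
\begin{equation*}
(x_1,\dots,x_n)\in A\iff\exists\,(a_{i,j},b_{i,j})_{i,j}\ \Bigl(\bigwedge_{i=1}^{n}E\bigl(x_i;a_{i,1},b_{i,1},\dots,a_{i,N},b_{i,N}\bigr)\ \wedge\ (a_{i,j},b_{i,j})_{i,j}\in\widehat A\Bigr),
\end{equation*}
a conjunction of finitely many Diophantine conditions, which can be folded into a single equation by the method for ``And'' recorded at the start of Section \ref{intsingle}. This exhibits $A$ as Diophantine over $O_{K,\calS}$.

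The main obstacle here is bookkeeping rather than conceptual. One must check that the encoding is faithful and computable relative to the chosen indexing $\theta$, so that $\widehat A$ is genuinely computably enumerable, and one must ensure that the non-uniqueness of the representation $x=\sum_j(a_j/b_j)\beta_j$ does no harm; it does not, since all representations are quantified existentially and any single one of them suffices. The genuinely difficult input---making $\Z[T]$ Diophantine over $O_{K,\calS}$---has already been supplied by the preceding theorem; granting it, the present statement follows by the transfer outlined above, exactly as in Demeyer's treatment \cite{Demeyer[10]} of the polynomial case.
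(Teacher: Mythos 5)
Your proposal is correct and takes essentially the same route as the paper: the outline in Section \ref{sec:gence} likewise combines the Diophantine definition of $\Z[T]$ over $O_{K,\calS}$ with the Denef--Demeyer theorem that c.e.\ relations over $\Z[T]$ are Diophantine over $\Z[T]$, and encodes elements of $O_{K,\calS}$ by tuples of numerators and denominators in $\Z[T]$ of their coordinates with respect to a $\Q(T)$-basis, pulling a c.e.\ set back to a c.e.\ (hence Diophantine) set of polynomial tuples exactly as in your $\widehat{A}$. The only cosmetic difference is that the paper phrases the encoding via a basis of $\Q(Y,T)$ over $\Q(T)$ (step (4) of its plan) rather than directly via a basis of $K$ over $\Q(T)$ with basis elements cleared into $O_{K,\calS}$.
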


\subsection{Arbitrary powers of a ring element}
In this section we again turn our attention to the rings of $\calS$-integers of function  fields, discussed in Sections \ref{sec:function} and \ref{intsingle}, and consider the case where the field is not necessarily rational.  We recall the notation and assumptions we used in these sections and add new ones.
\begin{notationassumptions}\textup{}
\begin{itemize}  
\item  Let $K, k$, $\calS$, $a, \qq, \qq_{\infty}, T=a-\sqrt{a^2-1},$ be as in Proposition \ref{prop:exist}.
\item Let $R=O_{K,\calS}, R'=R[T], R''=R[T] \cap O_{K(T), \{\qq_{\infty}\}}$.  Since $T$ satisfies a monic polynomial of degree 2 over $R$, every element of $R[T]$ is of the form $ a+bT$, where $a, b \in R$.
\item Let $\calS'$ be the set of primes of $K(T)$ lying above primes of $\calS$.
\item Let $\Qq_{\infty}$ be the prime below $\qq_{\infty}$ in $\Q(T)$.  In other words $\Qq_{\infty}$ corresponds to the infinite valuation of $\Q(T)$.  Observe that $\qq_{\infty}$ is the only prime above $\Qq_{\infty}$ in $K(T)$.
\item Let $d=[K(T):\Q(T)]$.
\item Let $L$ be the Galois closure of $K(T)$ over $\Q(T)$.
\item Let $m=[L:\Q(T)]$.
\item Let $\beta \in R''$ generate $K(T)$ over $\Q(T)$. Since $\beta \in R''$, we have that $\beta=a_{\beta}+b_{\beta}T$.
\item Let $\ttt_1,\ldots, \ttt_s$ be all the factors of $\qq_{\infty}$ and $\Qq_{\infty}$ in $L$.
\item For each positive integer $m$ let $\xi_m$ be a primitive $m$-th root of unity.
\item For each positive integer $m$ let $\Phi_{m}$ be the monic irreducible polynomial of $\xi_{m}$ over $\Q$.  We refer to polynomials of this form as ``cyclotomic'' polynomials.
\end{itemize}
\end{notationassumptions}
\subsection{Outline of the proof}
 For the results below we  need to construct a Diophantine definition of arbitrary powers of a non-constant element of the ring.  It turns out that it is more convenient to construct this definition for an element $T$ of a quadratic extension $R'$ of the ring $R$.  We will proceed as follows.
\begin{enumerate}
\item  Using Proposition \ref{prop:exist} we show that the set 
\[
P(T)=\{(a, b) \in R^2| \exists m \in \Z_{\geq 0}: a+bT=T^m\}
\]
 is Diophantine over $R$.
\item Next  we use the set $P(T)$ to show that the set
\[
Z(T)=\{(a, b) \in R^2| a+bT \in \Z[T]\}
\]
is Diophantine over $R$.  This is the main technical result of the section.
\item \label{it:start here}At this point, using results of J. Denef,  we deduce that for any c.e. set $A \subset \Z(T)^r$, the set of the form 
\[
\{(a_1,b_1, \ldots, a_r,b_r)|a_i, b_i \in R, (a_1+b_1T, \ldots, a_r +b_rT) \in A\}
\]
 is Diophantine over $R$. 
\item Let $Y \in R, Y\not \in k$.   Now observe that the set of $2d$-tuples 
\[
A_Y=(c_0(T), u_0(T),\ldots, c_{d-1}(T), u_{d-1}(T)) \subset \Z[T]^{2d}
\]
 such that $u_0(T)\ldots u_{d-1}(T) \not=0$ and $\displaystyle \sum_{i=0}^{d-1}\frac{c_i(T)}{u_i(T)}\beta^i \in \Z[Y]$ is computable, and therefore c.e.  Hence, the set
\[
\{(a_0,b_0, \ldots, a_{2d-2},b_{2d-2})|a_i, b_i \in R, \sum_{i=1}^{d}\frac{a_{2i-2}+b_{2i-2}T}{a_{2i-1}+b_{2i-1}T}\beta^{i-1} \in \Z[Y]\}
\]
is Diophantine over  $\Z[T]$, and therefore over $R$.  We can replace $\beta^i$ by $a_{\beta, i}+b_{\beta, i}T$ with $a_{\beta, i}, b_{\beta,i}\in R$, since $\beta^i \in R''$.   Further, using the fact that the conjugate of $T$ over $K$ is $T^{-1}$, and $T+T^{-1} =2a \in R$, we can rewrite
\[
\frac{a_{2i-2}+b_{2i-2}T}{a_{2i-1}+b_{2i-1}T}=\frac{(a_{2i-2}+b_{2i-2}T)(a_{2i-1}+b_{2i-1}T^{-1})}{a_{2i-1}^2+b_{2i-1}^2+2aa_{2i-1}b_{2i-1}}.
\]
\[
=\frac{\hat a_{2i-1}+\hat b_{2i-1}T}{\hat a_{2i}},
\]
where $\hat a_{2i}, \hat b_{2i-1}, \hat a_{2i-1}$ are  polynomials in $a_{2i-2},b_{2i-2}, a_{2i-1}, b_{2i-1} $ with coefficients in $R$ depending on $K$ and $a$ only. 
  
  Then, multiplying out all the products, using the monic irreducible polynomial of $T$ over $K$ to replace all powers of $T$ greater than 1, and since the sum has to be in $K$, at the end we will obtain  that
\[
\sum_{i=1}^{m}\frac{a_{2i-1}+b_{2i-1}T}{a_{2i}+b_{2i}T}\beta^i=\sum_{i=1}^{m}\frac{\hat a_{2i-1}+\hat b_{2i-1}T}{\hat a_{2i}}(a_{\beta,i}+b_{\beta,i}T)=\frac{\tilde a}{\tilde b}, \tilde a, \tilde b \in R,
\]
where $\tilde a=P_a(a_1,\ldots, a_{4d}, b_1, \ldots, b_{4d}), \tilde b=P_b(a_1,\ldots, a_{4d}, b_1, \ldots, b_{4d})$ are fixed polynomials with coefficients in $R$ in $a_{2i-2},b_{2i-2}, a_{2i-1}, b_{2i-1} $ with coefficients depending on $K$ and $a$ only. Finally, since $\frac{\tilde a}{\tilde b}=c \in R$, we conclude that 
$\Z[Y]$ consists of all elements $c \in R$ such that for some $\tilde b \in R$,  we have that 
\[
\tilde bc =P_a(a_1,\ldots, a_{2d-1}, b_1, \ldots, b_{2d-1}), 
\]
\[
\tilde b =P_b(a_0,\ldots, a_{2d-1}, b_0, \ldots, b_{2d-1}),
\]
 and $(a_0,b_0, \ldots, a_{2d-1},b_{2d-1})$ range over a Diophantine subset of $R^{4d}$.  Hence $\Z[Y]$ is Diophantine over $R$.  Now using the result of Denef one more time, we can assert that all c.e. subsets of $R$ are Diophantine. 
\end{enumerate}
To simplify the proof, we will  have $K(T)$-variables range not over $R'=R[T]$ but over a subring $R''$ of $R'$, where only one valuation $\qq_{\infty}$ is allowed as a pole of non-constant elements of the ring.  The following lemma shows that this restriction is a Diophantine condition relative to $R$.
\begin{lemma}
\label{le:doubleprime}
The set $\{ (a,b) \in R^2| a+Tb \in R''\}$ has a Diophantine definition over $R$.
\end{lemma}
\begin{proof}
Observe that once we fix an element $a \in O_{K,\calS}$, the field $K(T)$ is fixed.  The constant field of $K$ is a number field.  Therefore, by Theorem \ref{thm: p-order}, for each $\ttt \in  \calS'$ we have that the valuation ring $V_{\ttt}$ of $\ttt$ has a Diophantine definition over $K(T)$.  Hence, $V_{\ttt} \cap R'$ has a Diophantine definition over $R'$.  Thus, $R''=\bigcap_{\ttt \in \calS' \setminus \{\qq_{\infty}\}}V_{\ttt}$ is existentially definable over $R'$.  Consequently, there exists a polynomial $P(a+bT,z_1, \ldots,z_m)$ with coefficients in $R'$ such that for any $a,b \in R$ the equation $P(a+bT,\bar z)=0$ has solutions $z_1, \ldots,z_m \in R'$ if and only if $a+bT \in R''$.  Using the fact that $1$ and  $T$ are linearly independent over $K$, and $T^2-2a+1=0$, we can replace $P(a+bT,z_1,\ldots,z_m)$ by a polynomial $Q(a,b,v_1,\ldots,v_{r})$ such that for any pair $(a,b )\in R^2$ the equation $Q(a,b,v_1,\ldots,v_{r})=0$ has solutions $v_1,\ldots, v_{r} \in O_{K,\calS}$ if and only if $a+bT \in R''$.
\end{proof}

\subsection{Defining Polynomials over $\Z$ Using Root-of-Unity Polynomials }
In this section review a set polynomials from \cite{Demeyer10} and show how to adapt  these polynomials algebraic extensions of rational function fields. In his paper J.\ Demeyer defined a set $\calC$ of  {\it root-of-unity polynomials} to be the set of polynomials $F\in  \Z[T]$ 
satisfying one of the following three equivalent conditions:
\begin{enumerate}
\item $F$ is a divisor of $T^u -1$ for some $u > 0$.
\item $F$ or $-F$ is a product of distinct cyclotomic polynomials.
\item $F(0) = \pm1$, $F$ is squarefree, and all the zeros of $F$ are roots of unity.
\end{enumerate}
Observe that the constant polynomials $F(T)=\pm1$ satisfy the conditions above.  

The following property of polynomials in $\calC$ will serve as a foundation for applying the "weak vertical method" later on in this section.  The description of the method can be found in \cite{Sh34}.
\begin{proposition}
\label{forweak}
 Let $F \in \Z[T]$ with $F(0) \in \{-1, 1\}$, and let $\ell  \in \Z_{>0}$. In this case there
exists a polynomial $M \in \calD$ such that $F \equiv M \mod T^{\ell}$ in $\Z[T]$.
 \end{proposition}
 \begin{proof}
 Proposition 2.7 of \cite{Demeyer10}.   
 \end{proof}
 We will now prove a technical lemma to be used in determining the value of polynomials for some values of variables.
 \begin{lemma}
 \label{le:equivbelow}
 Let $\beta \in R''$ and assume that $\beta \equiv b \bmod (T-c)$ in $R''$ with $b, c \in \Z$.  Then $N_{K(T)/\Q(T)}(\beta) =G(T) \in \Q[T]$, and $G(c) =\pm b^{d}$.
 \end{lemma}
 \begin{proof}
 First, $R''$ is contained in the integral closure of $\Q[T]$ in $K(T)$.  Therefore, all coefficients of the monic irreducible polynomial of $\beta$ over $\Q(T)$ are polynomials in $\Q[T]$.  Hence, \[G(T)=N_{K(T)/\Q(T)}(\beta)\in \Q[T].\]    Let $R_L$ be the integral closure of $R''$ in $L$ (the Galois closure of $K(T)$ over $\Q(T)$).  Next consider the congruence $\beta \equiv b \bmod (T-c)$ in $R_L$.         Let $\beta_1=\beta,\ldots, \beta_m$ be all the conjugates  of $\beta$ over $\Q(T)$.  Since $T, c, b \in \Q(T)$, we have that $\beta_i \equiv b \bmod (T-c)$ in $R_L$.  Thus, 
 \[
 N_{L/\Q(T)}(\beta) =\prod_{i=1}^m\beta_i \equiv b^m \bmod (T-c)
 \]
  in $R_L\cap \Q(T)=\Q[T]$.  Further,    
 \[
 N_{L/\Q(T)}(\beta)=(N_{K(T)/\Q(T)}(\beta))^{[L:K(T)]}=G(T)^{m/d}.
 \]
   Therefore, $G(T)^{m/d}\equiv b^m \bmod (T-c)$, and $G(c)^{m/d}= b^m$.  Consequently, $G(c)=\xi \cdot b^d$, where  where $\xi$ is a root of unity.  Since $G(c)\in \Q$, we conclude that $G(c)=\pm b^d$.
 \end{proof}
 
In what follows we will need some well-known facts about roots of unity and function fields that the reader can find in the appendix.

\begin{lemma}
\label{alpha}
Suppose $\alpha \in R''$, and let $ m_1,\ldots, m_r, p_1,\ldots, p_r, c \in \Z_{>0}$ be defined as in Lemma \ref{le:approx}.  Let $n_1,\ldots, n_r \in \Z_{>0}$ be such that $\ord_{p_i}\Phi_{m_i}(c)= n_i$.  Let $b \in \Z$ be such that $\ord_{p_i}b=n_i$. Suppose now that  Equations \eqref{a:1}--\eqref{a:6} below hold with variables ranging over $R''$.
\begin{enumerate}
\item \label{a:1} $\alpha | (T^\ell-1)$ in $R''$,
\item \label{a:2} $\alpha \equiv \pm1 \bmod T$.
\item \label{a:3} $\ord_{\qq_{\infty}}\alpha = \ord_{\qq_{\infty}}T^{\sum_{i=1}^r n_i}$.
\item \label{a:6}  $\alpha \equiv b\mod (T-c)$ in $R''$.
\end{enumerate}
In this case $N_{K(T)/\Q(T)}(\alpha)= \prod_{i=1}^{r}\Phi_{m_i}(T)^{d}$, and $\alpha \in \Q(T)$.  Conversely, if $\alpha =\prod_{i=1}^{r}\Phi_{m_i}(T)$, then  \eqref{a:1} -- \eqref{a:6} can be satisfied in the remaining variables over $R''$.
\end{lemma}
\begin{proof}
Let $\QQ_{\infty}$ be the prime below $\qq_{\infty}$ in $\Q[T]$.  By Corollary \ref{cor:one}, the prime $\qq_{\infty}$  is the only prime above $\QQ_{\infty}$ in $K(T)$, and the ramification degree $e$ of $\qq_{\infty}$ over $\QQ_{\infty}$ is equal to $\ord_{\qq_{\infty}}T$.  Therefore, by Proposition \ref{prop:Chev}, we have that 
\[
f(\qq_{\infty}/\QQ_{\infty})=-d/\ord_{\qq_{\infty}}T, 
\]
where $f(\qq_{\infty}/\QQ_{\infty})$ is the relative degree of $\qq_{\infty}$ over $\QQ_{\infty}$.   Next observe that since $\alpha$ has a pole at $\qq_{\infty}$ only, $\alpha$ is integral with respect to $\Q[T]$, and  therefore  $N_{K(T)/\Q(T)}(\alpha)$ is a polynomial over $\Q$ in $T$.  Further,   using the assumption that $\ord_{\qq_{\infty}}\alpha = \ord_{\qq_{\infty}}T^{\sum_{i=1}^rn_i}$ and by Proposition \ref{prop:Chev} again, we have that
\[
\deg {N_{K(T)/\Q(T)}(\alpha)}=- \ord_{\QQ_{\infty}}N_{K(T)/\Q(T)}(\alpha)=-f(\qq_{\infty}/\QQ_{\infty}) \cdot \ord_{\qq_{\infty}}(\alpha)
\]
\[
=\frac{d}{\ord_{\qq_{\infty}}T}\ord_{\qq_{\infty}}(\alpha)=
\frac{d}{\ord_{\qq_{\infty}}T}(\ord_{\qq_{\infty}}T)\sum_{i=1}^rn_i=d \sum_{i=1}^r n_i.
\]

Second, since $\alpha | (T^{\ell}-1)$  in $R''$, we have that  $N_{K(T)/\Q(T)}(\alpha) \big{|} (T^{\ell}-1)^{d}$ in $\Q[T]$.  The polynomial $T^{\ell}-1$ does not have any multiple roots in $\bar \Q$, the algebraic closure of $\Q$.    Thus, the roots of $N_{K/\Q(T)}(\alpha)$ in $\bar \Q$ are of multiplicity at most $d$ and are $\ell$-th roots of unity.    

 Let $G(T):=N_{K(T)/\Q(T)}(\alpha) \in \Q[T]$. 
Then 
\[
G(T) =u\prod_{j| \ell}\Phi_j(T)^{a_j}, 
\]
where $a_j \in \{0,1, \ldots, d\}$ and $u \in \Q$.  By Lemma \ref{le:equivbelow} and Assumption \eqref{a:2}, we have that $G(0)=\pm 1$.
By Lemma \ref{ap1} we also have that $\prod_{j| \ell}\Phi_j(0)^{a_j}=\pm 1$.  Thus, we conclude that $u=\pm1$.   We now note that $\ord_{p_i}G(c)=\sum_{j|\ell}a_j \cdot  \ord_{p_i}\Phi_j(c_i)=a_in_i$ by the assumption on $c$.

From Assumption \eqref{a:6} and Lemma \ref{le:equivbelow}, we have that $G(c) = \pm b^d$.  Consequently,  for all $i=1,\ldots, r$, we have that $\ord_{p_i}G(c)=\ord_{p_i}b^d =d \cdot \ord_{p_i}b \equiv 0 \bmod n_id$.
 Thus, $a_i >0$ and $d\Big{|}a_{n_i}$.  But  $0\leq a_{n_i} \leq d$.  Hence, $a_{n_i} = d$ and 
\[
G(T)=\pm \prod \Phi_{n_i}(T)^{d}. 
\]
 Consequently,  
 \[
 N_{K(T)/\Q(T)}\left (\frac{\alpha}{\prod_{i=1}^r\Phi_{n_i}(T)}\right )=\pm 1,
 \]
 implying that  
 \[
 \frac{\alpha}{\prod_{i=1}^r\Phi_{n_i}(T)}
 \]
  is a unit of $R''$.  But the only units of this ring are elements of the constant field $k$.    Hence $\alpha = \mu \prod_{i=1}^r\Phi_{n_i}(T)$ for some $\mu \in k$.  But by Assumption \ref{a:2}, we have that $\alpha \equiv \pm 1 \bmod T$, and thus $\mu \prod_{i=1}^r\Phi_{n_i}(0) = \pm 1$, implying as before that $\mu =\pm 1$.  
 
    It is clear that $\alpha = \prod_{i=1}^r\Phi_{n_i}(T)$ satisfies \eqref{a:1} -- \eqref{a:6}.
\end{proof}

\bigskip
We now show that all conditions in Lemma \ref{alpha} are Diophantine over $R$, and therefore the set $\calD$ has a Diophantine description over $R$.
\begin{lemma}
\label{le:special}
$\{(a,b) \in R| a+bT \in \calD\}$ is Diophantine over $R$.
\end{lemma}
\begin{proof}
We need to convert our assumptions on $\ell, m, c, b$ and  Conditions \eqref{a:1} -- \eqref{a:6} of Lemma \ref{alpha} into a Diophantine definition of the set $\calD$.  First consider a recursive subset $Z$ of $\Z^3$ satisfying the following condition.
\[
(\ell, c, b, n) \in Z
\]
if and only if
\begin{enumerate}
\item there exist $ r, m_1,\ldots,m_r \in \Z_{>1}$ such that $\ell=m_1\ldots m_r$, $(m_i,m_j)=1$,
\item there exist $p_1\ldots p_r$, where for each $i=1,\ldots,r$ we have that $p_i$ is a prime number, and $p_i-1\equiv 0 \bmod \ell$.
\item $n_i:=\ord_{p_i}\Phi_{m_i}>0$, 
\item For all $i=1,\ldots,r$, for all $j$ such that $\ell \equiv 0 \bmod j$, it is the case that $j \ne m_i$ implies $\ord_{p_i}\Phi_j(c)=0$.
\item $n=\sum_{i=1}^rn_i$,
\item For all $i=1,\ldots,r$, we have that $\ord_{p_i}b=n_i$.
\end{enumerate}
By the MDRP theorem $Z$ is Diophantine over $\Z$ and therefore over $R$.  Further, as we noted above, the set $\{(s, u_s, w_s), s \in \Z_{>0}\},$ where $u_s-\sqrt{a^2-1}w_s =T^s$, is Diophantine over $R$ by Corollary \ref{s,Ts}.  Thus Condition \eqref{a:1} is Diophantine. 
Next we note that $\alpha \in R''$, and  we can  replace Condition \eqref{a:2}  with 
\[
\frac{\alpha -1}{T} \in R'' \lor \frac{\alpha +1}{T} \in R''.  
\]

Further, Condition \eqref{a:3} can be replaced with 
\[
\ord_{\qq_{\infty}}\frac{\alpha}{T^n} \geq 0.
\]
The order  conditions are Diophantine over $R$ as explained in Section \ref{section:order}.   We replace  Condition \eqref{a:6} with the following Diophantine condition: $\frac{\alpha-b}{T-c} \in R''$.
 \end{proof}
\bigskip
We will now use Proposition \ref{forweak} to give an existential definition of all polynomials in $T$ over $\Z$.  We use what elsewhere we called the ``Weak Vertical Method'' (see \cite{Sh34}).  
\begin{lemma}
\label{le:bound}
Let $X \in R'', X \not \in k$, and let $X=\sum_{i=0}^{d-1}c_i \beta^i, c_i \in \Q(T)$.  Then there exist $D=D(\beta) \in \Q[T], C=C(\beta) \in \R_{>0}$ dependent on $\beta$ only, such that $Dc_i =a_i \in \Q[T]$ and for all $i=0,\ldots,d-1$ we have that 
\[
\deg {a_i} < C|\ord_{\qq_{\infty}}X|.
\]
\end{lemma}
\begin{proof}
We proceed via a ``Linear Algebra'' proof of the sort described in Chapter 9 of \cite{Sh34}.  Let $L$, as above, be the Galois closure of $K(T)$ over $\Q(T)$.     Let ${\calT}=\{\ttt_1, \ldots, \ttt_s\}$ be  the set of all distinct factors of $\qq_{\infty}$ in $L$.  Since $\qq_{\infty}$ is the only factor of the infinite prime $\Qq_{\infty}$ of $\Q(T)$ in $K(T)$,  we have that ${\calT}$ also contains all factors of $\Qq_{\infty}$ in $L$.    

{\it Claim:} For any $i=1,\ldots, s$ and any $\sigma \in \Gal L {\Q(T)}$, it is the case that 
\[
\ord_{\ttt_i}\sigma(X)=\ord_{\ttt_1}X.  
\]

{\it Proof of the claim:} Since $\ttt_1,\ldots,\ttt_s$ are conjugates over $K(T)$ and $\Q(T)$, the Galois group
$\Gal L {K(T)}$ acts transitively on the set ${\calT}$,  and all elements of $\Gal L {\Q(T)}$ permute ${\calT}$.
So,  fix $i \in \{1,\ldots,s\}$ and $\sigma \in \Gal L {\Q(T)}$.   For some $r \in \{1,\ldots, s\}$, we have that $\sigma(\ttt_r)=\ttt_i$.  Let $\mu \in \Gal L {K(T)}$ be such that $\mu(\ttt_1)=\ttt_r$.  Then $\ord_{\ttt_1}X=\ord_{\sigma \mu(\ttt_1)}\sigma\mu(X)=\ord_{\ttt_i}\sigma(X)$. Similarly, $\ord_{\ttt_1}\beta=\ord_{\ttt_i}\sigma(X)$ for all $i=1,\ldots,s, \sigma \in \Gal L {\Q(T)}$.

Let $\sigma_1=\mbox{id}, \ldots, \sigma_d \in \Gal L {\Q(T)}$ be such that the set $\{\sigma_1(\beta),\ldots, \sigma_d(\beta)\}$ contains all distinct conjugates of $\beta$ over $\Q(T)$.

Now consider the following system of linear equations.
\[
A\bar a=\bar X,
\]
where 
\[
A=(\sigma_j(\beta^i)), j=1, \ldots, d, i=0,\ldots, d-1, 
\]
\[
\bar a=(c_0,\ldots, c_{d-1})^t, \bar X=(\sigma_1(X),\ldots, \sigma_{d}(X))^t.
\]
(Here ``$t$'' denotes transpose.) Since $\sigma_j(\beta)\ne \sigma_r(\beta)$ for all $j \ne r \in \{1,\ldots, d\}$, we have that $\det(A) \ne 0$ as a Vandermonde determinant.  Using Kramer's Rule, we can solve for $c_0, \ldots, c_{d-1}$ in terms of $\det(A), \sigma_r(X)$ with $r=1,\ldots, d$.  We obtain that 
\[
c_j=\frac{\det A_j}{\det A},
\]
where $A_j$ is the matrix obtained from $A$ by replacing its $j$-th column by the column $(\sigma_1(X), \ldots, \sigma_d(X))^t$.  

  Since  $X, \beta \in R''$, all entries of $A_j$ and $A$ have poles at all factors of $\qq_{\infty}$ in $L$, and no other poles. (This is so because $\qq_{\infty}$ and $\sigma(\qq_{\infty})$ have the same factorization in $L$. )   Therefore, if we set $D=\det^2 (A) \in \Q[T]$, then $a_j=Dc_j=\det A\det A_j \in \Q(T) \cap O_{L, \{\ttt_1,\ldots, \ttt_s\}}=\Q[T]$.    Thus, we have that $\ord_{\ttt_1}a_j <0$.  Let $A_{i,j}$ be the $i,j$-th minor of $A$.  Then $\ord_{\ttt_1}\det A<0, \ord_{\ttt_1}\det A_{i,j}<0$ and these orders depend on $\beta$ only.  Let $C_1=\max_{i,j}(|\ord_{\ttt_1}\det A|, |\ord_{\ttt_1}\det A_{i,j}|)$.  
  
  We now make the following observation we will use in our calculations below.  Let $Y=\sum_{r}Y_r \in L$.  Assume $\ord_{\ttt_1}Y<0$, and for all $r$ we have that $\ord_{\ttt_1}Y_r<0$.  Let $Y^*$ be such that $\ord_{\ttt_1}Y^* =\min_{r}\{\ord_{\ttt_1}Y_r\}$.  Then $\ord_{\ttt_1} Y \geq \ord_{\ttt_1}Y^*$, and  $-\ord_{\ttt_1} Y \leq -\ord_{\ttt_1}Y^*$.  Since $\ord_{\ttt_1} Y <0,  \ord_{\ttt_1}Y^*<0$, it follows that $|\ord_{\ttt_1} Y| \leq |\ord_{\ttt_1}Y^*|$.
  
  Using co-factors along the $j$-th column, we see that $\det A_j =\sum_{i=0}^{d-1}\pm \sigma_{i+1}(X)\det A_{i,j}$.  Further, using the observation above and the fact $\ord_{\ttt_1}\sigma_j(X)=\ord_{\ttt_1}X$, we also conclude that 
  \[
  |\ord_{\ttt_1}\det A_j|=|\ord_{\ttt_1}(\sum_{i=0}^{d-1}\pm \sigma_{i+1}(X)\det A_{i,j})|\leq |\ord_{\ttt_1}X|+C_1<2C_1|\ord_{\ttt_1}X|.
    \]
  Thus
  \[
  |\ord_{\ttt_1}a_j| =|\ord_{\ttt_1}\det A +\ord_{\ttt_1}\det A_j|< C_1 +2C_1|\ord_{\ttt_1}X|<3C_1|\ord_{\ttt_1}X|=C|\ord_{\ttt_1}X|.  
  \]
  Thus,
  \[
  \deg {a_j}=|\ord_{\Qq_{\infty}}a_i| < | \ord_{\qq_{\infty}}a_i| <C|\ord_{\qq_{\infty}}X|,
  \]
where $C=C(\beta)$ depends on $\beta$ only.
  \end{proof}

We now prove  our main technical result of this section.
\begin{proposition}
The set $\{(a,b) \in O_{K,\calS}| a+bT \in \Z[T]\}$ is Diophantine over $R$.
\end{proposition}
\begin{proof}
Let $z=z(\beta) \in \Z_{>0}$ be such that $z(\beta)>C(\beta)$.  We start with the following claim.

{\it Claim:}  Given $Y \in R''\setminus k$, the following system of equations and conditions can be satisfied over $R''$  if only if $Y \in \Z[T]$, and $Y(0)=\pm 1$.
\begin{equation}
\label{eq:calD}
M \in \calD \subset \Q[T], 
\end{equation}
\begin{equation}
\label{eq:order}
\ord_{\qq_{\infty}}\frac{T^{\ell}}{Y^{z}} <0,
\end{equation}
\begin{equation}
\label{eq:equiv}
  Y \equiv M \bmod T^{\ell} \mbox{ in } R''.  
\end{equation}
{\it Proof of the claim:}

 First we assume that the Equations \eqref{eq:calD}--\eqref{eq:equiv} are satisfied.  As in Lemma \ref{le:bound},  we write $Y=\sum_{i=0}^{d-1}c_i\beta^i$, where  $c_i \in \Q(T)$.  Further, by Lemma \ref{le:bound}, we know that $Dc_i \in \Q[T]$, and $\deg {Dc_i} < C(\beta) \ord_{\qq_{\infty}}Y$, Next, we observe that 
\[
Y-M=(c_0-M)+c_1(T) \beta + \ldots +c_{d-1}(T)\beta^{d-1}, 
\]
and 
\[
\frac{Y-M}{T^{\ell}}\in O_{K(T),\{\qq_{\infty}\}},
\]
 by Equation \ref{eq:equiv}.  Further,
\[
\frac{Y-M}{T^{\ell}}=f_0+f_1\beta +\ldots +f_{d-1}\beta^{d-1},
\]
where $Df_i \in \Q[T]$ by Lemma \ref{le:bound}.  Since $\Q[T]$-coordinates of elements of $K(T)$ with respect to the power basis of $\beta$ are unique, we conclude that for $i=1,\ldots, d-1$, $f_i=\frac{c_i}{T^{\ell}}$, and $Df_i=\frac{Dc_i}{T^{\ell}} \in \Q[T]$.  Thus, 
\begin{equation}
\label{eq:ineq}
|\ord_{\qq_{\infty}}T^{\ell}| < |\ord_{\qq_{\infty}}Dc_i |< C(\beta)|\ord_{\qq_{\infty}}Y|,
\end{equation} 
or $b_i=0$.  If Inequality \eqref{eq:ineq} holds, then by Inequality \eqref{eq:order} we have that 
\[
|\ord_{\qq_{\infty}}Y^z|=|z\ord_{\qq_{\infty}}Y| <|\ell \ord_{\qq_{\infty}}T| < C(\beta) |\ord_{\qq_{\infty}}Y|,
\]
 so that $z<C(\beta)$.  The last inequality contradicts our assumptions on $z$.  Consequently, we have to conclude that $b_i=0$ for $i=1,\ldots,d-1$, and $Y\in \Q[T]$.  
 
We now return to Inequality \eqref{eq:order} and use the fact that we now know that $Y \in \Q[T]$.  We can therefore rephrase this inequality as saying 
\[
\deg {T^{\ell}} > \deg {Y^z} >\deg Y.
\]
  Thus from Equation \eqref{eq:equiv} we conclude that all coefficients of $Y$ are the same as the first $\deg Y$ coefficients of $M$.  However, $M \in \Z[T]$, and $M(0) =\pm 1$.  Hence the same must be true of $Y$.

We now assume that $Y \in \Z[T]$ and $Y(0)=1$.  Let $\ell > z\cdot \deg{Y}$.  Then $\ord_{\qq_{\infty}}\frac{T^{\ell}}{Y^z} <0$, and Inequality \eqref{eq:order} will be satisfied.  By Proposition \ref{forweak}, we can find $M \in \calD$ to satisfy Equation \eqref{eq:equiv}.  This completes the proof of the Claim.

A few quick observations  now complete the proof of the proposition.  First, we note that if a polynomial $R \in \Z[T]$, then there exists $c \in \Z$ such that $Y=R+c$ has its constant term equal to 1.  Second, we remind the reader that we have a Diophantine definition of elements of $k$ from Lemma \ref{le:containslocal}.  We also have a definition of non-constant elements of $R''$.  The non-constant elements must have a negative order at $\qq_{\infty}$.  Third, we remind the reader that by Lemma \ref{le:doubleprime}, the set $\{a, b \in R|a+bT \in R''\}$ is Diophantine over $R$.  Finally, we remind the reader that the set $\calD$ is Diophantine over $R''$ by Lemma \ref{le:special}.
\end{proof}

From this point on, to complete the proof of Theorem \ref{thm:extce}, we proceed as in the proof outline starting with Part \ref{it:start here}.

\section{Appendix}
This section contains some facts about roots of unity,  function fields  and real roots of polynomial equations collected here  for the convenience of the reader.  

\subsection{Roots of Unity}
Below, for $r \in \Z_{>0}$, the polynomial $\Phi_r(X) \in \Z[X]$ denotes the monic irreducible polynomial of a primitive $r$-th root of unity $\xi_r$.
\begin{lemma}
\label{ap1}
For every positive integer $t>1$, it is the case that $\Phi_t(0)=\pm 1$.
\end{lemma}
\begin{proof}
 Observe that $\Phi_t(X) \Big{|} (1+X +\ldots +X^{t-1})$ in $\Z[X]$, and therefore for some $U(X) \in \Z[X]$ we have that $\Phi_t(X)U(X)=1+X +\ldots +X^{t-1}$.  Hence, $\Phi_t(0)U(0)=1$, where $\Phi_t(0), U(0) \in \Z$.  So, $\Phi_t(0)=\pm 1$.
\end{proof}

\begin{lemma}
\label{le:approx}
Let $r \in \Z_{>0}$.  Let  $\ell=m_1\ldots m_r ,$ where $m_1, \ldots, m_r$ are pairwise relatively prime positive integers.  Then there exists  a set $\{p_1,\ldots, p_r\}$ of distinct prime numbers satisfying the following  conditions:
\begin{enumerate}
\item For all $i=1,\ldots,r$ we have that $p_i\equiv 1 \bmod \ell $.
\item For any $i \ne j \in \Z_{>0}$ such that $ \ell \equiv 0 \bmod j $ and $\ell \equiv 0 \bmod i$, it is the case that $(\Phi_j(\xi_i), p_i)=1$ in the ring of algebraic integers of $\Q(\xi_j)$.
\end{enumerate}
Further,  there exists $c \in \Z_{>0}$ such that $\ord_{p_i}\Phi_{m_i}(c)>0$, and for all $j | \ell, j \ne m_i$ we have that $\ord_{p_i}\Phi_j(c)=0$.
\end{lemma}
\begin{proof}
First of all, we note that the arithmetic sequence $(t\ell +1)_{t \in \Z_{>0}}$ contains infinitely many primes by the Dirichlet Density Theorem.  Therefore,  we can pick $p_1, \ldots, p_r$ so that none of these prime divides ${\mathbf N}_{\Q(\xi_i)/\Q}(\Phi_j(\xi_i))$ for  all $i, j, i \ne j$ dividing $\ell$.    \\

Since $p_i-1 \equiv 0 \bmod \ell$, for all $i \in \{1,\ldots,r\}$, by Hensel's Lemma, we have that a primitive root of unity $\xi_{s} \in \Q_{p_i}$- the field of $p_i$-adic numbers, for all positive integers $s$ dividing $\ell$.  In other words, $\Phi_s(T)$ splits completely in $\Q_{p_i}$.  Thus, there is an embedding of $\Q(\xi_s)$ into $\Q_{p_i}$ that maps the ideal generated by a factor of $p_i$ in the ring of integers of $\Q(\xi_s)$ into the ideal generated by $p_i$ in the ring of integers of $\Q_{p_i}$.  If $j \ne s$ for some positive integers $j, s$ dividing $\ell$,  then by assumption  $\Phi_{j}(\xi_s) $ is not contained in any ideal generated by a factor of $p_i$ in the ring of integers of $\Q(\xi_s)$.  Thus, in $\Q_{p_i}$, we have that $\ord_{p_i}(\Phi_j(\xi_s))=0$.  \\

For each $i=1,\ldots, r$ we pick $c_i \in \Z_{>0}$ such that $\ord_{p_i}(c_i-\xi_{m_i})>0$.   Observe that this choice of $c_i$ implies that
\[
\ord_{p_i}\Phi_{m_i}(c_i)= \ord_{p_i}(\Phi_{m_i}(c_i)-\Phi_{m_i}(\xi_{m_i})) \geq \ord_{p_i}(c_i -\xi_{m_i})>0.
\]
 At the same time, if $j \ne m_i$ and is a positive integer dividing $\ell$, then by assumption on $p_i$ we have that
 \[
 \ord_{p_i}\Phi_j(c_i)=\min(\ord_{p_i}(\Phi_j(c_i)-\Phi_j(\xi_{m_i})),\ord_{p_i}\Phi_j(\xi_{m_i}))=\ord_{p_i}\Phi_j(\xi_{m_i})=0.  
 \]

    By the Weak Approximation Theorem, we can find $c \in \Z$ such that 
    \[
    \ord_{p_i}(c-c_i)>\ord_{p_i}\Phi_{m_i}(c_i)>0.  
    \]
    Then $\ord_{p_i}\Phi_{m_i}(c)=\min(\ord_{p_i}\Phi_{m_i}(c)-\Phi_{m_i}(c_i), \Phi_{m_i}(c_i))=\ord_{p_i}\Phi_{m_i}(c_i)$.  Finally, if $j \ne m_i, j $ divides $\ell$, then we have 
    \[
    \ord_{p_i}(\Phi_j(c))=\min(\ord_{p_i}\Phi_{j}(c)-\Phi_{j}(c_i), \Phi_{j}(c_i))=\ord_{p_i}\Phi_{j}(c_i)=0.
    \]

\end{proof}
\subsection{Function Fields in One Variable}
\begin{proposition}
\label{prop:Chev}
Let $K/R$ be a finite extension of function fields.  Let $\pp$ be a prime (valuation) of $R$, and  let $\qq_1,\ldots, \qq_g$ be primes of $K$ lying above $\pp$ (or valuations of $K$ extending $\pp$).  Let $e_i=e(\qq_i/\pp)$ be the ramification index of $\qq_i$ over $\pp$ and let $f_i=f(\qq_i/\qq)$ be the relative degree of $\qq_i$ over $\pp$.  Then the following statements are true.
\begin{enumerate}
\item $\sum_{i=1}^ge_i f_i=[K:R]$. (See Chapter 4, \S1, Theorem 1 of \cite{C}.)
\item If $y \in K$, then $\ord_{\pp}{\mathbf N}_{K/R}(y)=f(\sum_{i=1}^g \ord_{\qq_i}y)$. (See  Chapter 4,  \S5, Corollary 2 (of Theorem 6) of \cite{C}).
\end{enumerate}
\end{proposition}

\begin{corollary}
\label{cor:one}
Let $M/k(T)$ be a function field extension, where $k(T)$ is a rational function field in $T$ over  a constant field $k$.  Suppose  in $M$ we have that $T$ has a pole at one prime $\qq_{\infty}$ only.  Let $\Qq_{\infty}$ be the infinite valuation of $k(T)$.  Then $\qq_{\infty}$ is the only prime of $M$ lying above $\Qq_{\infty}$, and $e(\qq_{\infty}/\Qq_{\infty})=\ord_{\qq_{\infty}}T$.
\end{corollary}

\begin{proof}
Suppose $\ttt \ne \qq_{\infty}$ is another prime of $M$ lying above $\Qq_{\infty}$.  Then $\ord_{\ttt}T <0$ in $M$, contradicting assumptions on $T$.  Further, $\ord_{\qq_{\infty}}T=e(\qq_{\infty}/\Qq_{\infty})\ord_{\Qq_{\infty}}T$.

\end{proof}
\begin{proposition}
\label{prop:oneonly}

\end{proposition}

\subsection{Real Roots}
\begin{proposition}
\label{prop:realroots}
Let $f(t) \in \Q[t]$ be a polynomial of degree $d$ irreducible over $\Q$ such that all of its roots are real. Let $\alpha_1< \ldots <\alpha_d$ be all the roots of $f(t)$ in $\R$.  Let $P_r(t) \in \Q(\alpha_r)[t]$.  Then there is an algorithm to determine wether $P_r(t)$ has any real roots for any $r \in \{1,\ldots,d\}$.
\end{proposition}
\begin{proof}
We can write $P_r(t)=\sum_{i=0}^{\deg {P_r(t)}-1}A_{r,i}t^i$, where $A_{r,i} \in \Q(\alpha_r)$.  Since $A_{r,i}=\sum_{i=0}^{d-1}a_{r,i,j}\alpha_r^j$, where $a_{r,i,j} \in \Q$,  we can now rewrite 
\[
P_r(t)=\sum_{i=0}^{\deg {P_r(t)}-1}\left (\sum_{j=0}^{d-1}a_{r,i,j}\alpha_r^j t^i\right )=Q_r(t, \alpha_r) \in \Q[t, \alpha_r].
\]
Fix $r \in \{1,\ldots, d\}$ and consider the following system
\begin{equation}
\label{eq:real}
\left \{
\begin{array}{c}
f(x_1)=0,\\
\ldots \\
f(x_{d})=0,\\
x_1-x_2=u_{1,2}^2 +v_{1,2}^2 +w_{1,2}^2 +z_{1,2}^2,\\
x_2-x_3=u_{2,3}^2 +v_{2,3}^2 +w_{2,3}^2 +z_{2,3}^2,\\
\ldots \\
x_{d-1} -x_d =u_{d-1,d}^2 +v_{d-1,d}^2 +w_{d-1,d}^2 +z_{d-1,d}^2,\\
Q_r(t,x_r)=0.
\end{array}
\right .
\end{equation}
We claim that  System \ref{eq:real} has a solution $(\beta_1,\ldots, \beta_d,\mu) \in \R^{d+1}$ if and only if $P_r(t)$ has a root in $\R$.  Indeed, suppose the system has solutions in $\R$.  Then $x_i=\alpha_i, i=1,\ldots,d$, and $Q_r(t,x_r)=P_r(t)$ has a real root.    Conversely,  suppose $P_r(t)$ has a real root $\beta_r$.  Then we can set $x_i =\alpha_i$, and $t=\beta_r$ to obtain a solution for the system.

By a result of A. Tarski (\cite{Tar93}) there is an algorithm to decide whether System \eqref{eq:real} has real solutions.
\end{proof}
\begin{corollary}
\label{cor:negative min}
Let $f(t) \in \Q(t), \alpha_1,\ldots,\alpha_d$ be as in Proposition \ref{prop:realroots}.  Let $P_r(t) \in \Q(\alpha_r)$.  Assume further that $\deg{P_r(t)}$ is even and the leading coefficient is positive.  Then there is an algorithm to determine whether there exists $\gamma_r \in \R$ such that $P_r(\gamma_r) \leq 0$.
\end{corollary}
\begin{proof}
Since $\lim_{t\rightarrow \pm \infty}P_r(t)=\infty$, if there exists $\gamma_r \in \R$ such that $P_r(\gamma_r) \leq 0$, then $P_r(t)$ has real roots.  Conversely, if $P_r(t)$ has real roots, then for some $\gamma_r \in \R$ we have that $P_r(\gamma_r)\leq 0$.
\end{proof}

\vskip .5in
\parbox{4.7in}{
{\sc
\noindent
Department of Mathematics \hfill \\
\hspace*{.1in}  Queens College -- C.U.N.Y. \hfill \\
\hspace*{.2in}  65-30 Kissena Blvd. \hfill \\
\hspace*{.3in}  Queens, New York  11367 U.S.A. \hfill \\
Ph.D. Programs in Mathematics \& Computer Science \hfill \\
\hspace*{.1in}  C.U.N.Y.\ Graduate Center\hfill \\
\hspace*{.2in}  365 Fifth Avenue \hfill \\
\hspace*{.3in}  New York, New York  10016 U.S.A. \hfill}\\
\hspace*{.045in} {\it E-mail: }
\texttt{Russell.Miller\at {qc.cuny.edu} }\hfill \\
\medskip
\hspace*{.045in} {\it Web page: }\texttt{qcpages.qc.cuny.edu/$\widetilde{~}$rmiller}\\
}
\vskip 1in
\parbox{4.7in}{
{\sc
\noindent
Department of Mathematics\hfill \\
\hspace*{.1in}  East Carolina University \hfill \\
\hspace*{.2in}  Greenville, NC 27858 U.S.A. \hfill}\\
\hspace*{.045in} {\it E-mail: }
\texttt{shlapentokha\at {ecu.edu} }\hfill \\
\hspace*{.045in} {\it Web page: }\texttt{myweb.ecu.edu/shlapentokha}
}


\begin{thebibliography}{20}

 \bibitem{C} Chevalley, C., {\em Introduction to the theory of algebraic functions of one
  variable},
      Mathematical Surveys, AMS,
   Providence, RI,
     1951,  vol. 6,

\bibitem{Da3} Davis, M.,
       On the number of solutions of {D}iophantine equations,
       Proc. Amer. Math. Soc. 35, 1972, 562-564
        

\bibitem{Demeyer10}  Demeyer, J., ``Diophantine sets of polynomials over number fields'', Proc. Amer. Math. Soc. 138 (2010), no. 8, 2715-2728.

\bibitem{Den5} Denef, J.,
     ``The diophantine problem for polynomial rings of positive
  characteristic'',
     1979,
   Logic colloquium 78,
     Boffa, M.,
    van Dalen, D.,
      MacAloon, K. editors,
   North Holland,
       131 \ndash 145.
       
\bibitem{Den0}   Denef, J.,
Diophantine sets over {${\bf Z}[T]$},
Proc. Amer. Math. Soc., 69(1), 1978, 148-150
              
\bibitem{Eispadic}   Eisentr\"ager, K., ``Hilbert's {T}enth {P}roblem for function fields of varieties over
  number fields and p-adic fields'', 2007, Journal of Algebra, 2007, volume 310,   775\ndash 792.
 


\bibitem{K-R3} Kim, H.,
  Roush, F.~W.,
    ``Diophantine unsolvability over $p$-adic function fields'',
      1995,
Journal of Algebra,
     176,
      83\ndash 110.

\bibitem{L} Lang, S., {\em Algebraic number theory},
Addison Wesley, Reading, MA, 1970.


\bibitem{Mat10} Matiyasevich, Y.,
     ``Towards finite-fold {D}iophantine representations'',
    2010,  Zap. Nauchn. Sem. S.-Peterburg. Otdel. Mat. Inst. Steklov.
  (POMI),
     377,
   Issledovaniya po Teorii Chisel. 10,
78\ndash 90.
        
\bibitem{Mat77} Matiyasevich, Y.,
     ``Existence of non-effectivizable estimates in the theory of
  exponential diophantine equations'',
 1977,
   Journal of Soviet Mathematics,
  8(3),  299\ndash  311,


\bibitem{MB3} Moret-Bailly, L.,
       ``Elliptic curves and {H}ilbert's {T}enth {P}roblem for algebraic
  function fields over real and $p$-adic fields'',
       2006,
  Journal f\"{u}r Reine und Angewandte Mathematic,
     587,
      77\ndash 143,

\bibitem{Pourchet71} Pourchet, Y.,
     ``Sur la repr\'esentation en somme de carr\'es des polyn\^omes \`a
  une ind\'etermin\'ee sur un corps de nombres alg\'ebriques'',
  1971,
       Acta Arith., 9,  89\ndash 104
     


\bibitem{PS} Poonen, B.
     and Shlapentokh, A.,
     ``Diophantine definability of infinite discrete non-archimedean
  sets and diophantine models for large subrings of number fields'',
2005,
   Journal f{\"u}r die Reine und Angewandte Mathematik,
     588, 27\ndash 48.


\bibitem{Sh34} Shlapentokh, A.,
      {\em  Hilbert's tenth problem: Diophantine classes and extensions to
  global fields},
  Cambridge University Press,
   2006.


\bibitem{Sh90}
   Shlapentokh, A.,
       ``Diophantine definitions for some polynomial rings'',
       1990,
  Comm. Pure Appl. Math.,
      43(8), 1055\ndash 1066.

\bibitem{Sh92}  Shlapentokh, A., ``Hilbert's tenth problem for rings of algebraic functions in one variable over fields of constants of positive characteristic'', Trans. Amer. Math. Soc. 333 (1992), no. 1, 275-298.
   
\bibitem{Smory}
      Smory{\'n}ski, C.,
       A note on the number of zeros of polynomials and exponential
  polynomials, J. Symbolic Logic, 42(1), 1977, 99-106.
  
  \bibitem{Soare87} Soare, R. I.,
     Recursively enumerable sets and degrees, Perspectives in Mathematical Logic,
      A study of computable functions and computably generated sets, Springer-Verlag, Berlin,1987
    
\bibitem{Tar93} Tarski, A., 
A decision method for elementary algebra and geometry. Quantifier elimination and cylindrical algebraic decomposition (Linz, 1993), Texts Monogr. Symbol. Comput., Springer, Vienna, 1998, 24-84. 
       
\bibitem{Zahidi2000} Zahidi, K.,
       On {D}iophantine sets over polynomial rings,
       Proc. Amer. Math. Soc. 128(3), 2000, 877-884.
        
 

\end{thebibliography}
\end{document}